\newcommand{\R}{{\mathbb R}}
\newcommand{\N}{{\mathbb N}}
\newcommand{\Z}{{\mathbb Z}}
\newcommand{\EE}{{\mathbb E}}
\newcommand{\PP}{{\mathbb P}}
\newcommand{\A}{\mathcal A}
\newcommand{\1}{\mathbbm{1}}
\theoremstyle{plain}
\newtheorem{theorem}{Theorem}
\newtheorem{lemma}{Lemma}
\newtheorem{cor}{Corollary}
\theoremstyle{definition}
\newtheorem{rem}{Remark}
\newtheorem{ex}{Example}
\begin{document}
\title[]{On stochastic differential equations with arbitrary
slow convergence rates for strong approximation}

\author[Jentzen]
{Arnulf Jentzen}
\address{
Seminar f\"ur Angewandte Mathematik\\
Departement Mathematik\\
HG G 58.1\\
R\"amistrasse 101 \\
8092 Z\"urich\\
Switzerland} \email{arnulf.jentzen@sam.math.ethz.ch}

\author[M\"uller-Gronbach]
{Thomas M\"uller-Gronbach}
\address{
Fakult\"at f\"ur Informatik und Mathematik\\
Universit\"at Passau\\
Innstrasse 33 \\
94032 Passau\\
Germany} \email{thomas.mueller-gronbach@uni-passau.de}

\author[Yaroslavtseva]
{Larisa Yaroslavtseva}
\address{
Fakult\"at f\"ur Informatik und Mathematik\\
Universit\"at Passau\\
Innstrasse 33 \\
94032 Passau\\
Germany} \email{larisa.yaroslavtseva@uni-passau.de}

\begin{abstract}
In the recent article [Hairer, M., Hutzenthaler, M., \& Jentzen, A., Loss of regularity for Kolmogorov
equations, \emph{Ann.\ Probab.} {\bf 43} (2015), no. 2, 468--527] it has been shown that there exist stochastic
differential equations (SDEs) with infinitely often differentiable and globally bounded coefficients
such that the Euler scheme converges to the solution in the strong sense but with no polynomial rate.
Hairer et al.'s result naturally leads to
the question whether this slow convergence
phenomenon can be
overcome by using a
more sophisticated approximation method
than the simple Euler scheme.
In this article we answer this question to the negative.
We prove that there exist
SDEs with infinitely often differentiable and globally bounded coefficients 
such that no approximation method based on finitely many observations of the driving Brownian motion converges in absolute mean to the solution with a polynomial rate.
Even worse, we prove that 
for every arbitrarily slow convergence speed 
there exist SDEs with infinitely
often differentiable and globally bounded coefficients 
such that no approximation method based on finitely many observations of the driving Brownian motion can converge in absolute mean to the solution faster than the given speed of convergence. 
\end{abstract}

\maketitle

\section{Introduction}

Recently, it has been shown in Theorem~5.1 in Hairer et al.~\cite{hhj12} that there exist stochastic
differential equations (SDEs) with infinitely often differentiable and globally bounded coefficients such that
the Euler scheme converges 
to the solution but with no polynomial rate, neither in the strong sense nor in
the numerically weak sense.
In particular, Hairer et al.'s work~\cite{hhj12} includes the following result as a
special case.
\begin{theorem}[Slow convergence of the Euler scheme]
\label{thm:intro1}
Let $ T \in (0,\infty) $, $ d \in \{ 4, 5, \dots \} $,
$ \xi \in \R^d $.
Then there exist infinitely often differentiable and globally
bounded functions
$ \mu, \sigma \colon \R^d\to \R^d$
such that for every probability space
$ ( \Omega, \mathcal{F}, \PP ) $,
every normal filtration $ ( \mathcal{F}_t )_{ t \in [0,T] } $ on $ ( \Omega, \mathcal{F}, \PP ) $,
every standard $ ( \mathcal{F}_t )_{ t \in [0,T] } $-Brownian motion
$
  W \colon [0,T] \times \Omega \to \R
$
on $ ( \Omega, \mathcal{F}, \PP ) $,
every continuous $ ( \mathcal{F}_t )_{ t \in [0,T] } $-adapted stochastic process $ X \colon [0,T] \times \Omega \to \R^d $
with
$
  \forall \, t \in [0,T] \colon
  \PP\big(
  X(t)
  = \xi
  + \int_0^t \mu\big( X(s) \big) \, ds
  + \int_0^t \sigma\big( X(s) \big) \, dW(s)
  \big) = 1
$,
every sequence of mappings
$ Y^n \colon \{ 0, 1, \dots, n \} \times \Omega \to \R^d $,
$ n \in \N $,
with
$
  \forall \, n \in \N, k \in \{ 0, 1, \dots, n \}
  \colon
  Y^n_k =
  \xi + \sum_{ l = 0 }^{ k - 1 }
  \big[
    \mu\big(
      Y^n_l     
    \big)
    \frac{ T }{ n }
    +
    \sigma\big(
      Y^n_l
    \big)
    \big(
      W(( l + 1 ) T / n )
      -
      W( l T / n )
    \big)
  \big]
$,
and every $ \alpha \in (0,\infty) $
we have
\begin{equation}
  \lim_{ n \to \infty }
  \big(
  n^{ \alpha }
  \cdot
  \EE\big[
    \| X( T ) - Y^n_n \|
  \big]
  \big)
  =
  \infty
  .
\end{equation}
\end{theorem}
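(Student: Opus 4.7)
The plan is to construct smooth bounded coefficients $\mu,\sigma$ so that the exact value $X(T)$ encodes information from infinitely many time scales of the driving Brownian motion $W$ near $T$, in such a way that any Euler scheme with $n$ steps can resolve only the first $O(\log n)$ of those scales, leaving an error that shrinks subpolynomially in $n$. The overall philosophy follows Hairer, Hutzenthaler, and Jentzen~\cite{hhj12}.

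\textbf{Construction of the coefficients.} I would work in dimension $d=4$; the extension to $d\ge 5$ is obtained by adjoining constant coordinates. The four components would play distinct roles: $X_1$ is a deterministic clock ($\mu_1\equiv 1$, $\sigma_1\equiv 0$); $X_2=W$ transports the Brownian motion ($\mu_2\equiv 0$, $\sigma_2\equiv 1$); on a geometric sequence of subintervals $I_k=[T-2^{-k+1},T-2^{-k}]$ the drift of $X_3$ is a smooth bump in $(X_1,X_2,X_3)$ that pushes $X_3$ toward $+1$ or $-1$ according to a smoothed version of $\mathrm{sgn}(X_2)$ near a distinguished instant $t_k\in I_k$; finally, $X_4$ accumulates on $I_k$ a signed contribution of size $a_k$, dictated by the current value of $X_3$, with $\sum_k a_k<\infty$ chosen so that $X_4$ stays bounded. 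All coefficients are assembled from compactly supported $C^\infty$ bumps of the state variables, so $\mu$, $\sigma$ and all their derivatives are bounded, and a unique strong solution exists by standard SDE theory.

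\textbf{Lower bound on the Euler error.} The crucial observation is that the exact $X_4(T)$ has the form $\sum_{k\ge 1}\varepsilon_k a_k$, where $\varepsilon_k\in\{\pm 1\}$ is determined by $W$ near $t_k$, and the $\varepsilon_k$ are, to leading order, mutually independent fair coin flips. The Euler scheme with $n$ steps samples $W$ only on the grid $\{jT/n\}_{j=0}^n$; on subintervals with $2^{-k}\ll T/n$, i.e.\ $k\gtrsim\log_2 n$, there is at most one grid point in $I_k$, and the value of $W$ at the distinguished time $t_k$ is, conditionally on all grid observations, a nondegenerate Brownian bridge. Consequently the Euler scheme cannot recover $\varepsilon_k$ with probability better than a universal constant strictly less than $1$ on those scales, so each such scale contributes at least a constant multiple of $a_k$ to $\EE[\|X(T)-Y^n_n\|]$. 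Summing yields $\EE[\|X(T)-Y^n_n\|]\gtrsim\sum_{k\gtrsim\log_2 n}a_k$; choosing, for instance, $a_k=1/k^2$ produces an error of order $1/\log n$, which is not $O(n^{-\alpha})$ for any $\alpha>0$.

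\textbf{Main obstacle.} The delicate part is the simultaneous tuning, on the $k$-th subinterval, of the mollification width of the bumps (which must vanish fast enough that the $X_3$-transition cleanly tracks $\mathrm{sgn}(W(t_k))$), the threshold inside the smoothed sign function (which must be small compared to the typical Brownian fluctuation $2^{-k/2}$ so that the triggering event has probability bounded away from zero), and the steps $a_k$ (summable to keep $X_4$ bounded, but with heavy enough tail to beat any polynomial decay). The key technical point is to show that the Euler scheme genuinely fails to recover $\varepsilon_k$ on small scales with probability bounded away from zero, uniformly in the outcomes of the previous scales; once this is established, the slow-convergence conclusion follows by the summation-over-scales argument described above.
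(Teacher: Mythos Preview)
Your multi-scale construction has a basic incompatibility between the geometric partition $I_k=[T-2^{-k+1},T-2^{-k}]$ and the target contribution size $a_k=1/k^2$. Since $\mu$ must be globally bounded and $X_4$ evolves only by drift, the increment of $X_4$ over $I_k$ obeys
\[
  \bigl|X_4(T-2^{-k})-X_4(T-2^{-k+1})\bigr|\le \|\mu_4\|_\infty\cdot 2^{-k},
\]
so necessarily $a_k=O(2^{-k})$; your choice $a_k=1/k^2$ is impossible. With the forced bound $a_k\le C\,2^{-k}$ your tail sum becomes $\sum_{k\gtrsim\log_2 n}a_k=O(1/n)$, which is polynomial and yields no slow-convergence statement. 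More generally, any partition into intervals of length $\ell_k$ faces $a_k\le C\ell_k$ and an error bound $\sum_{\ell_k\lesssim T/n}\ell_k$; with geometric $\ell_k$ this is always $O(1/n)$. (A very slowly decaying choice such as $\ell_k\sim 1/(k(\log k)^2)$ could in principle give a $1/\log n$ tail, but then the construction, the sign-reading mechanism, and the smoothness of the $x_1$-dependent bumps all have to be redone; none of this is in your sketch.) There is a second issue: you read $\varepsilon_k=\operatorname{sgn}(X_2(t_k))=\operatorname{sgn}(W(t_k))$, but $W(t_k)$ is of order $\sqrt{T}$ while the Brownian-bridge uncertainty given the Euler grid is only of order $\sqrt{T/n}$, so for large $n$ the Euler scheme \emph{does} recover $\operatorname{sgn}(W(t_k))$ with probability close to $1$; you would need a genuinely local functional of $W$ on $I_k$.

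The paper does not prove Theorem~\ref{thm:intro1} directly (it is quoted from Hairer et al.), but its own construction in Section~\ref{sec:setting} together with Corollary~\ref{cor1b} proves the stronger Theorem~\ref{thm:intro2}, which contains Theorem~\ref{thm:intro1} as the special case of Euler approximations. That construction is \emph{not} a sum over time scales and sidesteps the boundedness obstruction entirely. One takes $\mu_4(x)=h(x_1)\cos\bigl(x_2\,\psi(x_3)\bigr)$, which is globally bounded because $\cos$ is, even though $\psi$ is unbounded and grows super-exponentially. The solution satisfies $X_4(T)=\gamma\cos\bigl(X_2\,\psi(X_3)\bigr)$ where $X_2,X_3$ are independent centered Gaussians built from $W$ on disjoint early subintervals. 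Any method using $n$ evaluations of $W$ leaves a gap of length $\gtrsim 1/n$ in $[0,\tau_1/2]$, so the conditional law of $X_2$ given the data is Gaussian with variance $\gtrsim n^{-3}$ (Brownian-bridge calculation, Lemma~\ref{lem:strong_lower}). On the event $\{X_3>\psi^{-1}(Cn^{3/2})\}$ this residual uncertainty is amplified by the frequency $\psi(X_3)$ to order at least $1$, making $\cos(X_2\psi(X_3))$ unpredictable; a symmetrization argument (Lemma~\ref{symm}) then gives an error of order $\gtrsim\exp\bigl(-c\,|\psi^{-1}(Cn^{3/2})|^2\bigr)$, which is subpolynomial once $\psi$ grows faster than $e^{qx^2}$ for every $q$. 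The ``many scales'' are supplied not by a time partition but by the Gaussian tail of $X_3$.
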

Theorem \ref{thm:intro1} 
naturally leads to
the question whether this slow convergence phenomenon can be
overcome by using a 
more sophisticated approximation method
than the simple Euler scheme.
Indeed, the literature on approximation of SDEs contains a number
of results on approximation schemes that are specifically designed for non-Lipschitz coefficients and in fact achieve
polynomial strong convergence rates for suitable classes of such SDEs
(see, e.g.,
\cite{h96,hms02,Schurz2006,MaoSzpruch2013Rate,HutzenthalerJentzenKloeden2012,
WangGan2013,Sabanis2013ECP,Sabanis2013Arxiv,Beynetal2014,TretyakovZhang2013}
for SDEs with monotone coefficients
and see, e.g., 
\cite{BerkaouiBossyDiop2008,GyoengyRasonyi2011,DereichNeuenkirchSzpruch2012,
Alfonsi2013,NeuenkirchSzpruch2014,HutzenthalerJentzen2014,
HutzenthalerJentzenNoll2014CIR,ChassagneuxJacquierMihaylov2014}
for SDEs with possibly non-monotone coefficients)
and one might hope that one of these schemes is able to overcome the slow 
convergence phenomenon stated in Theorem~\ref{thm:intro1}.
In this article we destroy this hope by answering the question posed above to the negative. We prove that there exist
SDEs with infinitely often differentiable and globally bounded coefficients 
such that no approximation method based on finitely many 
observations of the driving Brownian 
motion 
(see \eqref{eq:intro2} for details)
converges in absolute mean to the 
solution with a polynomial rate.
This fact 
is the subject of the next theorem, which immediately follows
from Corollary~\ref{cor1b} in Section~\ref{strong}.
\begin{theorem}
\label{thm:intro2}
Let $ T \in (0,\infty) $, $ d \in \{ 4, 5, \dots \} $,
$ \xi \in \R^d $.
Then there exist infinitely often differentiable and globally
bounded functions
$ \mu, \sigma\colon \R^d\to \R^d$
such that for every probability space
$ ( \Omega, \mathcal{F}, \PP ) $,
every normal filtration $ ( \mathcal{F}_t )_{ t \in [0,T] } $ on $ ( \Omega, \mathcal{F}, \PP ) $,
every standard $ ( \mathcal{F}_t )_{ t \in [0,T] } $-Brownian motion
$
  W \colon [0,T] \times \Omega \to \R
$
on $ ( \Omega, \mathcal{F}, \PP ) $,
every continuous 
$ ( \mathcal{F}_t )_{ t \in [0,T] } $-adapted 
stochastic process $ X \colon [0,T] \times \Omega \to \R^d $
with
$
  \forall \, t \in [0,T] \colon
  \PP\big(
  X(t)
  = \xi
  + \int_0^t \mu\big( X(s) \big) \, ds
  + \int_0^t \sigma\big( X(s) \big) \, dW(s)
  \big) = 1
$,
and every $ \alpha \in (0,\infty) $
we have
\begin{equation}
\label{eq:intro2}
  \lim_{ n \to \infty }
  \Bigl(
  n^{ \alpha }
  \cdot
  \inf_{ s_1, \dots, s_n \in [0,T] }
  \inf_{
    \substack{
      u \colon \R^n \to \R
    \\
      \text{measurable}
    }
  }
  \EE\Big[
    \big\|
      X( T )
      -
      u\big( W( s_1 ), \dots, W( s_n )
      \big)
    \big\|
  \Big]
  \Bigr)
  =
  \infty
  .
\end{equation}
\end{theorem}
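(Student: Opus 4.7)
\emph{Proof plan.} The strategy is to extract from the construction underlying Theorem~\ref{thm:intro1} a functional representation of the solution that is manifestly resistant to approximation from \emph{any} finite collection of pointwise observations of $W$ at deterministic times, not merely to the piecewise-constant reconstruction that is implicit in the Euler scheme.

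First, I would design the coefficients $\mu,\sigma$ in the spirit of \cite{hhj12}, but arranged so that the terminal value $X(T)$ is a Lipschitz transform of an expression of the form $\ft\bigl(\int_0^T \bb(W(s))\,dW(s)\bigr)$ with $\bb$ smooth and compactly supported and $\ft$ smooth and bounded. This is achievable in dimension $d\ge 4$ by using one component of $X$ to track $W$, a second component to accumulate a running stochastic integral $I(t)=\int_0^t \bb(W(s))\,dW(s)$, a third component acting as a smooth cutoff in $t$ that is inactive until a small neighborhood of $T$, and a fourth component where the composition with $\ft$ is effectively realized. The payoff of this representation is that $X(T)$ depends on the Brownian path through the stochastic integral $I(T)$, a quantity that requires resolution of $W$ at \emph{all} scales.

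Next, I would translate the nonlinear approximation problem into a conditional-variation problem. The $L^1$-best approximation of $X(T)$ based on a random vector $V=(W(s_1),\dots,W(s_n))$ is a version of the conditional median; in particular,
\begin{equation*}
\inf_{u\colon\R^n\to\R^d\text{ meas.}} \EE\bigl[\|X(T)-u(V)\|\bigr] \;\ge\; \tfrac12\,\EE\bigl[\bigl|X_i(T)-\EE[X_i(T)\mid V]\bigr|\bigr]
\end{equation*}
for any coordinate $i$. It therefore suffices to bound the right-hand side from below, uniformly over all choices of $s_1,\dots,s_n\in[0,T]$, by a quantity that decays to zero slower than every polynomial in $n$.

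For this uniform lower bound I would use a pigeonhole-plus-scaling argument: among any $n$ deterministic times in $[0,T]$, there is a gap $[a,b]\subset[0,T]$ of length at least $T/(n+1)$ containing no observation. Conditionally on $V$, the process $(W(t)-W(a))_{t\in[a,b]}$ is a Brownian bridge between the observed endpoints $W(a')$, $W(b')$ (with $a'\le a$, $b\le b'$ the adjacent observation times, or the natural variant at the endpoints of $[0,T]$), independent of everything else. Hence the conditional $L^1$-oscillation of $I(T)$ given $V$ is bounded below by the $L^1$-oscillation of $I(b)-I(a)$ under the bridge law, and by Brownian scaling this behaves like $\psi(b-a)\ge \psi(T/(n+1))$ for a function $\psi$ determined by $\bb$ and $\ft$. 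The construction of \cite{hhj12} already produces $\psi$ that decays slower than every polynomial; this directly yields Theorem~\ref{thm:intro2}, and its quantitative refinement (an arbitrary slow rate, as in Corollary~\ref{cor1b}) is obtained by tuning the profile of $\bb$ and $\ft$ to a prescribed modulus.

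The main obstacle is the passage from the equispaced Euler grid to arbitrary adaptive grids: whereas the argument in \cite{hhj12} exploits the coarseness of the equispaced grid at one specific scale, here the lower bound must hold uniformly over all $n$-point configurations. The delicate step is therefore Step~3, namely showing that the conditional distribution of the stochastic integral $I(b)-I(a)$ has non-negligible $L^1$-spread under arbitrary conditioning on endpoint values (and on all earlier observations), with a quantitative control depending only on $b-a$. This is precisely where the smoothness and support properties of $\bb$ and $\ft$, and the nested design of their oscillations across scales, have to be carefully matched to the desired subpolynomial rate.
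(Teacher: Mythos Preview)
Your high-level architecture --- pigeonhole to find a gap among the $n$ observation times, Brownian-bridge decomposition over that gap, and a symmetrization bound for the conditional $L^1$-error --- is exactly what the paper does (Lemma~\ref{symm} is the symmetrization, Theorem~\ref{t1} and Corollary~\ref{cor1} carry out the gap/bridge step). But the specific construction you propose, and the mechanism you invoke for the sub-polynomial rate, contain a genuine gap.

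You represent $X(T)$ through a single stochastic integral $I(T)=\int_0^T b(W(s))\,dW(s)$ with $b$ bounded, and then assert that the conditional $L^1$-oscillation of $I(T)$ over a gap of width $h$ ``by Brownian scaling behaves like $\psi(h)$'' with $\psi$ sub-polynomial. That is not correct: for any bounded integrand the bridge contribution over a gap of width $h$ has $L^2$-size $O(h^{1/2})$ (or $O(h^{3/2})$ after an integration by parts), which is polynomial in $1/n$. The sub-polynomial behavior therefore has to be manufactured by the outer nonlinearity $\ft$, yet with a single integral you have no independent lever to force $\ft'$ to be large with quantifiable probability; you gesture at ``nested oscillations across scales'' and at the HHJ construction, but HHJ's lower bound is specific to the Euler grid and does not by itself survive the passage to arbitrary evaluation points.

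The paper's construction is structurally different in two ways that close this gap. First, the integrands depend on \emph{time} (via the clock $X_1(t)=t$), not on $W$: one sets $X_2=\int_0^{\tau_1} f(s)\,dW(s)$ and $X_3=\int_{\tau_2}^{\tau_3} g(s)\,dW(s)$, so both are honest Gaussians and, because the supports of $f$ and $g$ are disjoint, they are independent. Second, the terminal value is $X_4(T)=\gamma\cos\bigl(X_2\,\psi(X_3)\bigr)$ with $\psi$ growing faster than $\exp(qx^2)$ for every $q$. The hidden bridge part $Y_2$ sits inside $X_2$ and is tiny (variance $\asymp h^3$); the amplification comes from the \emph{independent} factor $\psi(X_3)$, which exceeds $1/\sigma_{Y_2}$ on an event of probability $\exp\bigl(-\tfrac{2}{\beta}|\psi^{-1}(1/\sigma_{Y_2})|^2\bigr)$. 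On that event the symmetrized difference $\cos((Y_1{+}Y_2)\psi(X_3))-\cos((Y_1{-}Y_2)\psi(X_3))=2\sin(Y_1\psi(X_3))\sin(Y_2\psi(X_3))$ is of order one, and independence of $Y_1,Y_2,X_3$ makes the expectation tractable. This separation of ``small hidden Gaussian'' from ``independent random amplifier'' is the idea your single-integral proposal is missing.
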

Even worse,
our next result states
that for every arbitrarily slow convergence speed 
there exist SDEs with infinitely
often differentiable and globally bounded coefficients 
such that no approximation method 
that uses finitely many observations and, additionally, starting from some positive time,  the whole path of the driving Brownian motion, 
can converge in absolute mean to the solution faster than the given speed of convergence. 
\begin{theorem}
\label{thm:intro3}
Let $ T \in (0,\infty) $, $ d \in \{ 4, 5, \dots \} $,
$ \xi \in \R^d $
and let
$ ( a_n )_{ n \in \N } \subset (0,\infty) $ and 
$
  (\delta_n)_{ n \in \N }
  \subset (0,\infty)
$ be sequences of strictly 
positive reals such that
$
  \lim_{ n \to \infty } a_n = \lim_{ n \to \infty } \delta_n = 0
$.
Then there exist infinitely often differentiable and globally
bounded functions
$ \mu, \sigma \colon \R^d\to \R^d$
such that for every probability space
$ ( \Omega, \mathcal{F}, \PP ) $,
every normal filtration $ ( \mathcal{F}_t )_{ t \in [0,T] } $ on $ ( \Omega, \mathcal{F}, \PP ) $,
every standard $ ( \mathcal{F}_t )_{ t \in [0,T] } $-Brownian motion
$
  W \colon [0,T] \times \Omega \to \R
$
on $ ( \Omega, \mathcal{F}, \PP ) $,
every continuous $ ( \mathcal{F}_t )_{ t \in [0,T] } $-adapted stochastic process $ X \colon [0,T] \times \Omega \to \R^d $
with
$
  \forall \, t \in [0,T] \colon
  \PP\big(
  X(t)
  = \xi
  + \int_0^t \mu\big( X(s) \big) \, ds
  + \int_0^t \sigma\big( X(s) \big) \, dW(s)
  \big) = 1
$,
and every $ n \in \N $
we have
\begin{equation}
\label{eq:intro3}
  \inf_{ s_1, \dots, s_n \in [0,T] }
  \,
  \inf_{
    \substack{
      u \colon \R^n \times C( [ \delta_n, T ] ) \to \R  
    \\
      \text{measurable}
    }
  }
  \EE\Big[
    \big\|
      X( T )
      -
      u\big(
        W( s_1 ), \dots, W( s_n )
        ,
        ( W(s) )_{ s \in [ \delta_n , T ] }
      \big)
    \big\|
  \Big]
  \geq
  a_n
  .
\end{equation}
\end{theorem}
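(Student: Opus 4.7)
My plan is to reduce Theorem~\ref{thm:intro3} to an information-theoretic lower bound on the approximation of a suitable one-dimensional functional $Y$ of the Brownian motion~$W$, and then to realize a smooth, globally bounded version of $Y$ as $X(T)$ for an autonomous SDE in dimension $d\geq 4$. Most of the work lies in the second step.

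For the lower bound I would pick, for each $k\in\N$, an auxiliary scale $\varepsilon_k\in(0,\delta_k)$ and a threshold $h_k>0$ (both depending on $(a_k),(\delta_k)$) and set $Z_k=\1_{A_k}$ where $A_k=\bigl\{\sup_{s\in[0,\varepsilon_k]}W(s)\geq h_k\bigr\}$, with parameters tuned so that $\PP(A_k)\approx\tfrac12$. Because $\varepsilon_k<\delta_k$, the event $A_k$ is independent of $W|_{[\delta_k,T]}$, so only the point evaluations $W(s_1),\dots,W(s_n)$ can carry information about $Z_k$. Using a Brownian-bridge/conditioning argument, any $n$ point values on $[0,\varepsilon_k]$ leave the conditional variance of $\1_{A_k}$ bounded away from zero once $\varepsilon_k$ is chosen sufficiently small (in a sense depending on $n$), giving $\EE[|Z_k-u(\cdots)|]\geq c>0$ for a universal constant $c$. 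Packaging the scales, I would define $Y=\sum_{k\in\N}b_k Z_k$ with $b_k>0$ decreasing fast enough for almost sure convergence and $b_n\gtrsim a_n$. A conditioning argument exploiting the near-independence of the $Z_k$ at widely separated scales then converts the single-scale bound into $\EE[|Y-u|]\geq a_n$ for every~$n$.

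The main technical step is the realization of $Y$ as the terminal value of a smooth autonomous SDE. Following and extending the Hairer--Hutzenthaler--Jentzen construction (cf.\ Theorem~\ref{thm:intro1}) I would work with four coordinates: one serves as a deterministic clock via $dX_1=dt$; a second follows a smoothly clipped version of $W$; and the remaining two implement, over a disjoint family of time windows $[t_k,t_k+\varepsilon_k]$ with $t_k$ accumulating at~$0$, a trapping mechanism that detects $Z_k$ from the Brownian behaviour on the window and then adds $b_k Z_k$ to a running accumulator. All switches are realized through compactly supported bump functions of $X_1$ acting on the drift and diffusion, with uniformly controlled $C^\infty$-norms after rescaling; global boundedness of $X(T)$ is ensured by $\sum_k b_k<\infty$.

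The hard part will be keeping $\mu,\sigma$ smooth and globally bounded while encoding infinitely many scales that accumulate at $t=0$. My plan is to let $t_k$ approach $0$ so rapidly that the supports of the bump functions in $X_1$-space shrink geometrically, while their rescaled $C^\infty$-norms stay uniformly bounded; this forces $\mu,\sigma$ to extend smoothly across the slice $\{X_1=0\}$. A further delicate point is the detection step: a single-time reading of $W$ cannot reliably recover $\1_{A_k}$, so the two auxiliary coordinates must be driven into one of two well-separated attractors depending on the full Brownian behaviour on the window, and this trapping mechanism is where the hypothesis $d\geq 4$ is genuinely used.
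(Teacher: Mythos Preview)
Your proposal follows a very different route from the paper, and the central construction has a genuine obstruction.

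The fatal gap is the smoothness of $\mu,\sigma$ at $X_1=0$. You propose bump functions of the clock variable $X_1$ supported on windows $[t_k,t_k+\varepsilon_k]$ with $t_k\to 0$ and $\varepsilon_k\to 0$. For the detection/trapping mechanism to work on the $k$-th window, the diffusion (or drift) coefficient there must have amplitude bounded away from~$0$; otherwise the auxiliary coordinates do not feel the Brownian motion strongly enough to resolve $A_k$. But a bump of height $\asymp 1$ on an interval of length $\varepsilon_k$ has $m$-th derivative of size $\asymp\varepsilon_k^{-m}$. Saying that the \emph{rescaled} $C^\infty$-norms stay bounded is precisely the statement that the unrescaled derivatives blow up like $\varepsilon_k^{-m}$; this does \emph{not} give a $C^\infty$ extension across $\{X_1=0\}$. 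To obtain smoothness at the accumulation point you would need the amplitudes $c_k$ to satisfy $c_k\lesssim \varepsilon_k^{m}$ for every $m$, i.e.\ to decay faster than any power of $\varepsilon_k$, which kills the detection mechanism and hence the lower bound. A secondary gap is the trapping step itself: realising the indicator $Z_k=\1_{\{\sup_{[0,\varepsilon_k]}W\ge h_k\}}$ as the output of a smooth, globally bounded autonomous flow is highly nontrivial, since smooth bounded vector fields do not produce exact threshold behaviour, and any approximate trap would have to be analysed uniformly in $k$ against the super-polynomially shrinking amplitudes just described.

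The paper avoids all of this by encoding the arbitrary slowness not through infinitely many spatial scales in $\mu,\sigma$, but through a single free function $\psi\in C^\infty(\R,(0,\infty))$ appearing in a \emph{fixed}, manifestly smooth and bounded $4$-dimensional SDE. With a clock $X_1(t)=t$ and two It\^o integrals $X_2(\tau_1)=\int_0^{\tau_1}f\,dW$, $X_3(\tau_3)=\int_{\tau_2}^{\tau_3}g\,dW$ built on three fixed, well-separated time intervals, one has $X_4(T)=\gamma\cos\bigl(X_2(\tau_1)\,\psi(X_3(\tau_3))\bigr)$. A Brownian-bridge symmetrisation (Lemma~\ref{symm}, Lemma~\ref{lem:strong_lower}) shows that any approximation missing the path on a gap $(t_0,t_1)\subset[0,\tau_1/2]$ incurs error at least $c\exp\bigl(-\tfrac{2}{\beta}\,|\psi^{-1}(C(t_1-t_0)^{-3/2})|^2\bigr)$. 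Given $(a_n)$ and $(\delta_n)$, one then \emph{constructs} a strictly increasing $\psi$ with $\psi^{-1}$ growing so slowly that this bound dominates $\kappa\,a_n$ (Corollaries~\ref{cor2} and~\ref{cor4}); the coefficients stay smooth and bounded for every choice of $\psi$, there is no accumulation of structures, and the passage from $\kappa\,a_n$ to $a_n$, general $\xi$, and $d\ge 4$ is handled by a scaling/translation/padding argument.
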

Theorem~\ref{thm:intro3} is an immediate consequence of Corollary~\ref{cor4}
    in Section~\ref{strong} together with
an appropriate scaling argument.
Roughly speaking, such SDEs can 
not be solved approximately in the strong sense in a reasonable computational time
as long as approximation methods based on finitely many evaluations of the driving
Brownian motion are used.
In Section~\ref{sec:numerics} we illustrate Theorems~\ref{thm:intro2} and \ref{thm:intro3} 
by a numerical example.

Next we point out that our results do neither cover the class of strong approximation
algorithms that may use finitely many arbitrary linear functionals of the 
driving Brownian motion nor cover strong approximation algorithms that may 
choose the number as well as the location of the evaluation nodes for the 
driving Brownian motion in a path dependent way.
Both issues will be the subject of future research.

We add that for strong approximation of SDEs with globally Lipschitz coefficients there is a multitude of results 
on lower error bounds already available in the literature; 
see, e.g.,~\cite{ClarkCameron1980,hmr01,m02,MG02_habil,m04,mr08,Ruemelin1982}, 
and the references therein. 
We also add that Theorem~2.4 in Gy\"{o}ngy~\cite{g98b} establishes, as a special case, the almost sure convergence rate
$ \nicefrac{ 1 }{ 2 } - $ for the Euler scheme and SDEs with globally bounded and infinitely often differentiable coefficients.
In particular, we note that there exist SDEs with globally bounded and infinitely often differentiable coefficients which, roughly speaking, 
can not be solved approximatively in the strong sense in a reaonsable computational time 
(according to Theorem~\ref{thm:intro3} above) but might be solveable, approximatively, in the
almost sure sense in a reasonable computational time (according to Theorem~2.4 in Gy\"{o}ngy~\cite{g98b}).

\section{Notation}

Throughout this article the following notation is used.
For a set $ A $, a vector space $ V $, a set $ B \subseteq V $,
and a function $ f \colon A \to B $
we 
put
$
  \operatorname{supp}( f ) = \left\{ x \in A \colon f(x) \neq 0 \right\}
$.
Moreover, for a natural number $ d \in \N $
and a vector $ v \in \R^d $
we 
denote by
$
  \|v\|_{ \R^d }
$
the Euclidean norm of $ v \in \R^d $.
Furthermore,
for a real number $ x \in \R $
we put
$
  \lfloor x \rfloor = \max\!\left(
    \Z \cap ( - \infty, x ]
  \right)
$
and
$
  \lceil x \rceil = \min\!\left(
    \Z \cap [ x, \infty )
  \right)
$.

\section{A family of stochastic differential equations with smooth and globally bounded coefficients}
\label{sec:setting}

Throughout this article we study SDEs provided by the following setting.

Let
$ T  \in (0,\infty) $,
let
$ ( \Omega, \mathcal{F}, \PP ) $
be a probability space with a normal filtration
$ ( \mathcal{F}_t )_{ t \in [0,T] } $,
and let
$
  W \colon [0,T] \times \Omega \to \R
$
be a 
standard $ ( \mathcal{F}_t )_{ t \in [0,T] } $-Brownian motion
on $ ( \Omega, \mathcal{F}, \PP ) $.
Let
$ \tau_1, \tau_2, \tau_3 \in \R $
satisfy
$
  0 < \tau_1 \leq \tau_2 < \tau_3 < T
$
and let
$ f, g, h \in C^{ \infty }( \R, \R ) $
be globally bounded and satisfy
$
  \operatorname{supp}( f ) \subseteq ( - \infty, \tau_1 ]
$,
$
  \inf_{ s \in [ 0, \nicefrac{ \tau_1 }{ 2 } ] } | f'(s) | > 0
$,
$
  \operatorname{supp}( g ) \subseteq [ \tau_2, \tau_3 ]
$,
$
 \int_{ \R } \left| g(s) \right|^2 ds > 0
$,
$
  \operatorname{supp}( h ) \subseteq [ \tau_3, \infty )
$,
and
$
  \int_{ \tau_3 }^{ T } h(s) \, ds \neq 0
$.

For every 
$
  \psi \in C^{ \infty }( \R , (0,\infty) )
$
let
$
  \mu^{\psi} \colon \R^4 \to \R^4
$
and 
$
  \sigma \colon \R^4 \to \R^4
$
be the functions such that for all 
$
  x = ( x_1, \dots, x_4 ) \in \R^4
$
we have
\begin{equation}
\label{coeff}
\begin{aligned}
  \mu^{ \psi }(x) & = \bigl( 1, 0, 0, h( x_1 ) \cdot \cos( x_2 \, \psi( x_3 ) ) \bigr)
\qquad
  \text{and}
\qquad
  \sigma(x) =
  \bigl(
    0, f( x_1 ) , g( x_1 ), 0
  \bigr)
\end{aligned}
\end{equation}
and let
$ X^{ \psi } = ( X^{ \psi }_1, \dots, X^{ \psi }_4 ) \colon [0,T] \times \Omega \to \R^4 $
be an $ ( \mathcal{F}_t )_{ t \in [0,T] }  $-adapted continuous
stochastic processes with the property that for all
$ t \in [0,T] $
it holds $ \PP $-a.s.\ that
$
  X^{ \psi }( t )
  =
  \int_0^t \mu^{ \psi }( X^{ \psi }( s ) ) \, ds + \int_0^t \sigma( X^{ \psi }( s ) ) \, dW(s)
$.

\begin{rem}
Note that for  all $ \psi \in C^{ \infty }(  \R, (0,\infty) ) $
we have that
$ \mu^{ \psi } $
and 
$\sigma$ are infinitely often differentiable and globally bounded.
\end{rem}

\begin{rem}

Note that for all
$ \psi \in C^{ \infty }( \R , ( 0, \infty ) ) $,
$ t \in [0,T] $
it holds $ \PP $-a.s.\ that
\begin{equation}\label{sol}
\begin{split}
  X_1^{ \psi }(t) & = t ,
\qquad
  X_2^{ \psi }(t) = \int_0^{ \min\{ t, \tau_1 \} } f(s) \, dW(s),
\\
  X_3^{ \psi }(t) & = \mathbbm{1}_{ [ \tau_2 , \, T ] }( t ) \cdot
  \int_{ \min\{ t, \tau_2 \} }^{ \min\{ t , \tau_3 \} } g(s) \, dW(s) ,
\\
  X_4^{ \psi }(t) & =
  \mathbbm{1}_{ [ \tau_3 , \, T ] }(t) \cdot
  \cos\bigl(
    X_2^{ \psi }( \tau_1 ) \,
    \psi\big(
      X_3^{ \psi }( \tau_3 )
    \big)
  \bigr)
  \cdot
  \int_{ \tau_3 }^{ t } h(s) \, ds
  .
\end{split}
\end{equation}
\end{rem}

\begin{ex}
\label{ex:fgh}
Let 
$ c_1, c_2, c_3 \in \R $
and let 
$ f, g, h \colon \R \to \R $ be the functions such that 
for all $ x \in \R $
we have
\begin{equation}
\begin{split}
  f(x) & = \1_{(-\infty,\tau_1)}(x)\cdot
    \exp\Bigl(
      c_1 +
      \frac{
        1
      }{
        x - \tau_1
      }
    \Bigr)
  ,
\\
  g(x) & = \1_{(\tau_2,\tau_3)}(x)\cdot
    \exp\Bigl(
      c_2 +
      \frac{ 1 }{ \tau_2 - x } + \frac{ 1 }{ x - \tau_3 }
    \Bigr)
  ,
\\
  h(x) & =
    \1_{(\tau_3,\infty)}(x)\cdot
    \exp\Bigl(
      c_3 
      +
      \frac{ 1 }{ \tau_3 - x }
    \Bigr)
    .
\end{split}
\end{equation}
Then $ f, g, h $ satisfy the conditions stated above,
that is,
$ f, g, h $ are 
infinitely often differentiable
and
globally bounded
and 
$ f, g, h $ satisfy
$
  \operatorname{supp}( f ) \subseteq ( - \infty, \tau_1 ]
$,
$
  \inf_{ s \in [ 0, \nicefrac{ \tau_1 }{ 2 } ] } | f'(s) | > 0
$,
$
  \operatorname{supp}( g ) \subseteq [ \tau_2, \tau_3 ]
$,
$
  \int_{ \R } \left| g(s) \right|^2 ds >0
$,
$
  \operatorname{supp}( h ) \subseteq [ \tau_3, \infty )
$,
and
$
  \int_{ \tau_3 }^{ T } h(s) \, ds \neq 0
$.
\end{ex}

\section{Lower error bounds for general strong approximations}
\label{strong}

In Theorem~\ref{t1} below we provide lower bounds for the error of any 
strong approximation
of $ X^{ \psi }( T ) $ for the processes $ X^{ \psi } $
from Section~\ref{sec:setting} based on the whole path of $ ( W(t) )_{ t \in [0,T] } $
up to a  time interval $ ( t_0 , t_1 ) \subseteq [ 0,  \tau_1/ 2  ] $.
The main tool for the proof of Theorem \ref{t1} is the following
simple symmetrization argument,
which is a special case of the concept of radius of information used in information based complexity, see~\cite{TWW88}.

\begin{lemma}
\label{symm}
Let
$ ( \Omega, \A, \PP ) $ be a probability space,
let
$
  ( \Omega_1, \A_1 )
$
and
$
  ( \Omega_2, \A_2 )
$
be measurable spaces,
and let
$
  V_1 \colon \Omega\to \Omega_1
$
and
$
  V_2, V_2', V_2'' \colon \Omega \to \Omega_2
$
be random variables such that
\begin{equation}
\label{eq:symm_ass}
  \PP_{ (V_1, V_2) } =
  \PP_{ (V_1, V_2') } =
  \PP_{ (V_1, V_2'') }
  \,
  .
\end{equation}
Then
for all measurable mappings
$
  \Phi \colon \Omega_1\times\Omega_2\to \R
$
and
$
  \varphi\colon \Omega_1\to\R
$
we have
\begin{equation}
  \EE\big[
    |\Phi(V_1,V_2)- \varphi(V_1)|
  \big]
  \ge
  \tfrac{ 1 }{ 2 }
  \,
  \EE\big[
    | \Phi( V_1, V_2' ) - \Phi( V_1, V_2'' ) |
  \big]
  .
\end{equation}
\end{lemma}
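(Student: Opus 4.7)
The plan is straightforward: combine the triangle inequality with the three-fold distributional equality in \eqref{eq:symm_ass}.

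First, I would observe that the hypothesis $\PP_{(V_1,V_2)} = \PP_{(V_1,V_2')} = \PP_{(V_1,V_2'')}$ implies, by the change-of-variables formula applied to the measurable map $(\omega_1,\omega_2)\mapsto |\Phi(\omega_1,\omega_2) - \varphi(\omega_1)|$, that
\begin{equation*}
  \EE\big[|\Phi(V_1,V_2') - \varphi(V_1)|\big]
  =
  \EE\big[|\Phi(V_1,V_2'') - \varphi(V_1)|\big]
  =
  \EE\big[|\Phi(V_1,V_2) - \varphi(V_1)|\big].
\end{equation*}

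Next, I would apply the pointwise triangle inequality
\begin{equation*}
  |\Phi(V_1,V_2') - \Phi(V_1,V_2'')|
  \leq
  |\Phi(V_1,V_2') - \varphi(V_1)|
  +
  |\varphi(V_1) - \Phi(V_1,V_2'')|,
\end{equation*}
take expectations on both sides, and use the identity from the previous step to bound each of the two resulting expectations by $\EE[|\Phi(V_1,V_2) - \varphi(V_1)|]$. Dividing by $2$ yields the claim.

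There is no real obstacle here: the only subtlety is ensuring that $|\Phi(\cdot,\cdot) - \varphi(\cdot)|$ is genuinely a measurable function of $(\omega_1,\omega_2)$ so that the distributional identity transfers to the expectation, which is immediate from the measurability of $\Phi$ and $\varphi$ together with the continuity of the absolute value and of subtraction on $\R$.
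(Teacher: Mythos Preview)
Your proposal is correct and follows essentially the same approach as the paper: first use the distributional identity \eqref{eq:symm_ass} to obtain the three-way equality of expectations, then apply the triangle inequality and divide by two. The paper's own proof is slightly terser but identical in substance.
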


\begin{proof}
Observe that
\eqref{eq:symm_ass}
ensures that
\begin{equation}
  \EE\big[
    | \Phi(V_1, V_2) - \varphi(V_1) |
  \big]
  =
  \EE\big[
    |
      \Phi( V_1, V_2' ) - \varphi(V_1)
    |
  \big]
  =
  \EE\big[
    | \Phi(V_1,V_2'') - \varphi(V_1) |
  \big]
  .
\end{equation}
This and the triangle inequality imply that
\begin{equation}
\begin{split}
  \EE\big[
    | \Phi(V_1,V_2) - \varphi(V_1) |
  \big]
& \geq
  \tfrac{ 1 }{ 2 } \,
  \EE\bigl[
    | \Phi(V_1,V_2') - \Phi(V_1,V_2'') |
  \bigr]
 ,
\end{split}
\end{equation}
  which finishes the proof.
\end{proof}

In addition, we employ in the proof
of Theorem~\ref{t1} the following lower bound for the first absolute moment
of the sine of a centered normally distributed random variable.

\begin{lemma}
\label{l2}
Let $ ( \Omega , \A, \PP ) $ be a probability space,
let $ \tau \in [1,\infty) $,
and let $ Y \colon \Omega \to \R $ be a
$ \mathcal{N}( 0, \tau^2 ) $-distributed random variable.
Then
\begin{equation}
  \EE\big[
    | \sin(Y) |
  \big]
  \ge
  \frac{ 1 }{ \sqrt{ 8 \pi } } \cdot
  \exp \Bigl( - \frac{ \pi^2 }{ 8 }
  \Bigr)
  .
\end{equation}
\end{lemma}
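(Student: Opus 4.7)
My plan is to lower bound $|\sin(y)|$ pointwise by an elementary function whose expectation under $\mathcal{N}(0,\tau^2)$ can be computed exactly. The natural candidate is $\sin^2(y)$: because $|\sin(y)|\le 1$ for every $y\in\R$, we have $\sin^2(y)\le|\sin(y)|$ pointwise, and by the double-angle identity $\sin^2(y)=\tfrac12(1-\cos(2y))$. Taking expectations gives
\begin{equation*}
\EE[|\sin(Y)|]\;\ge\;\EE[\sin^2(Y)]\;=\;\tfrac12\bigl(1-\EE[\cos(2Y)]\bigr).
\end{equation*}

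The second step is to compute $\EE[\cos(2Y)]$ explicitly. This is the real part of $\EE[e^{2iY}]=e^{-2\tau^2}$, the value at $t=2$ of the characteristic function of a centred Gaussian $Y\sim\mathcal N(0,\tau^2)$; an elementary self-contained derivation completes the square inside the Gaussian integral $\int_{\R}\cos(2y)\cdot\tfrac{1}{\tau\sqrt{2\pi}}e^{-y^2/(2\tau^2)}\,dy$. Substituting back yields $\EE[|\sin(Y)|]\ge\tfrac12(1-e^{-2\tau^2})$, and since $\tau\ge 1$ this gives the $\tau$-uniform lower bound $\tfrac12(1-e^{-2})$.

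The final step is a numerical comparison: $\tfrac12(1-e^{-2})\approx 0.432$ whereas $\tfrac{1}{\sqrt{8\pi}}\exp(-\pi^2/8)\approx 0.058$, so the claimed inequality holds comfortably, with a substantial margin.

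There is no real obstacle in this plan; the only computational ingredient is the standard Gaussian integral for $\EE[\cos(2Y)]$, which is immediate from either the known Gaussian characteristic function or a direct completion of the square. The considerable slack between the bound $\tfrac12(1-e^{-2})$ produced here and the weaker bound stated in the lemma suggests that the authors may have in mind an alternative, more geometric route—e.g.\ restricting attention to a single interval around $\pi/2$ and using $\sin(y)\ge 0$ there, since the factor $e^{-\pi^2/8}$ is precisely $\sqrt{2\pi}\cdot\varphi_1(\pi/2)$ where $\varphi_1$ denotes the standard normal density. However, such single-interval bounds naturally degrade as $\tau\to\infty$, whereas the $\sin^2$ approach above is automatically uniform in $\tau\ge 1$ and therefore seems preferable.
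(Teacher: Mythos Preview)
Your proof is correct and genuinely different from the paper's. The paper argues directly from the integral representation: it restricts to $z\in[0,\pi/2]$, bounds the Gaussian density below by $\tfrac{1}{\sqrt{2\pi}}e^{-\pi^2/8}$ on that interval, substitutes $z\mapsto \tau z$, and then exploits the fact that $\int_0^{k\pi/2}|\sin x|\,dx=k$ for integers $k\ge 0$ together with $\lfloor\tau\rfloor\ge\tau/2$ (valid for $\tau\ge 1$) to cancel the factor $1/\tau$ introduced by the substitution. Your route via $|\sin y|\ge\sin^2 y$ and the Gaussian characteristic function is shorter, avoids the periodicity bookkeeping, and yields the substantially sharper constant $\tfrac12(1-e^{-2})$. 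One small correction to your commentary: the paper's single-interval argument does \emph{not} degrade as $\tau\to\infty$, precisely because the periodicity count recovers a factor of order $\tau$ from $\int_0^{\tau\pi/2}|\sin|\,$; but you are right that your approach achieves uniformity in $\tau$ more transparently.
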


\begin{proof}
   We have
\begin{equation}
\begin{split}
&
  \EE\big[
    | \sin(Y) |
  \big]
  =
  \frac{ 1 }{ \sqrt{ 2 \pi } }
  \int_\R
  | \sin( \tau z ) |
  \exp\Bigl(
    -
    \frac{ z^2 }{ 2 }
  \Bigr)
  dz
\\
  &
  \ge
  \frac{
    1
  }{
    \sqrt{ 2 \pi }
  }
  \exp\Bigl( - \frac{ \pi^2 }{ 8 } \Bigr)
  \int_0^{ \frac{ \pi }{ 2 } }
  \left| \sin( \tau z ) \right|
  dz
  =
  \frac{
    1
  }{
    \tau \sqrt{ 2 \pi }
  }
  \exp\Bigl( - \frac{ \pi^2 }{ 8 } \Bigr)
  \int_0^{ \frac{ \tau \pi }{ 2 } }
  \left| \sin(z) \right|
  dz
  .
\end{split}
\end{equation}
This and the fact that
\begin{equation}
  \int_0^{
    \frac{ \tau \pi }{ 2 }
  }
  | \sin(x) | \, dx
\ge
  \int_0^{
    \lfloor \tau \rfloor \cdot \frac{ \pi }{2 }
  }
  |\sin(x)| \, dx
=
  \lfloor \tau \rfloor \cdot
  \int_0^{ \frac{ \pi }{ 2 } }
  \sin(x)\, dx
=
  \lfloor \tau \rfloor
\ge
  \frac{ \tau }{ 2 }
\end{equation}
complete the proof.
\end{proof}

We first prove the announced lower error bound for strong approximation 
of $X^\psi(T)$ in the case of the time interval $(t_0,t_1)$ 
being sufficiently small.

\begin{lemma}
\label{lem:strong_lower}
Assume the setting in Section~\ref{sec:setting},
let $ \alpha_1, \alpha_2, \alpha_3, \Delta, \beta \in (0,\infty) $,
and $ \gamma \in \R $ be given by
\begin{equation}
\begin{aligned}
\label{alphas}
  \alpha_1
   =
  \int_0^{ \tau_1 }
  \left| f(s) \right|^2 ds ,
\;\;
  \alpha_2
 =
  \sup_{ s \in [ 0 , \nicefrac{ \tau_1 }{ 2 } ] }
  | f'(s) |^2
  ,
\;\;
  \alpha_3
  =
  \inf_{ s \in [ 0 , \nicefrac{ \tau_1 }{ 2 } ] }
  | f'(s) |^2
  ,\\
  \Delta  =
  \Bigl|
    \min\Bigl\{
      \frac{ \alpha_1 }{ 2  \alpha_2 }
      ,
      \frac{ 1 }{ \alpha_2 }
    \Bigr\}
  \Bigr|^{ 1 / 3 },
  \;\; 
  \beta 
   = \int_{\tau_2}^{\tau_3} \left| g(s) \right|^2 ds,
  \;\;
   \gamma
   =
  \int_{ \tau_3 }^T
  h(s) \, ds,
\end{aligned}  
\end{equation}
let 
$ \psi \in C^{ \infty }( \R, (0,\infty) ) $
be strictly increasing with
$
  \liminf_{ x \to \infty } \psi( x ) = \infty
$
and
$
  \psi\big( \sqrt{ 2 \beta } \big) = 1
$,
let $ t_0, t_1 \in [ 0,  \tau_1 / 2  ] $
satisfy $ 0 < t_1 - t_0 \leq \Delta $,
and let
$
  u \colon
  C\big(
    [0,t_0] \cup [t_1,T] , \R
  \big) \to \R
$
be measurable.
Then  
$
       \tfrac{
          \sqrt{ 12 }
        }{
          ( t_1 - t_0 )^{ 3 / 2 }
          \sqrt{ \alpha_3 }
        }
        \in
        \psi(\R)
$ 
and 
\begin{equation}
  \EE\Big[
    \big|
      X^{ \psi }_4( T ) -
      u\big(
        ( W(s) )_{ s \in [0, t_0] \cup [t_1, T ] }
      \big)
    \big|
  \Big]
\geq
    \frac{ \left| \gamma \right| }{ 8 \pi^{ 3 / 2 } }
    \exp\Bigl(
      - \tfrac{ 2 }{ \beta }
      \Bigl|
        \psi^{ - 1 }\!\Bigl(
        \tfrac{
          \sqrt{ 12 }
        }{
          ( t_1 - t_0 )^{ 3 / 2 }
          \sqrt{ \alpha_3 }
        }
        \Bigr)
      \Bigr|^2
      -
      \tfrac{ \pi^2 }{ 4 }
    \Bigr)
    .
\end{equation}
\end{lemma}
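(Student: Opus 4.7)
The plan is to apply the symmetrization Lemma~\ref{symm} to a carefully chosen pair of resampled Brownian bridges, reducing the error to a product of two sines of independent centered Gaussians, to which Lemma~\ref{l2} applies.

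\textbf{Setup and Gaussian decomposition.}
Write $V_1 := (W(s))_{s\in[0,t_0]\cup[t_1,T]}$. Using integration by parts on $(t_0,t_1)$ together with the linear interpolation of $W(t_0), W(t_1)$, I decompose $X_2^\psi(\tau_1) = M + N$, where $N = -\int_{t_0}^{t_1} f'(s)\,B_s\,ds$ (with $B$ the null-endpoint Brownian bridge on $(t_0,t_1)$) is Gaussian and independent of $V_1$, and $M$ is $\sigma(V_1)$-measurable. Since $|f'|^2 \ge \alpha_3$ on $[0,\tau_1/2]$ with $f'$ of constant sign there, and $\iint\operatorname{Cov}(B_s,B_u)\,ds\,du = (t_1-t_0)^3/12$, one obtains $\sigma_N^2 := \operatorname{Var}(N) \ge \alpha_3(t_1-t_0)^3/12$; the matching upper bound $\sigma_N^2 \le \alpha_2(t_1-t_0)^3/12$ combined with $(t_1-t_0)\le\Delta$ gives $\sigma_N^2 \le \alpha_1/24$, so that $M\sim\mathcal{N}(0,\alpha_1-\sigma_N^2)$ with $\alpha_1-\sigma_N^2 \ge \sigma_N^2$. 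Setting $Y := X_3^\psi(\tau_3)\sim\mathcal{N}(0,\beta)$ and $\Psi := \psi(Y)$, the disjointness of $[0,\tau_1]$ and $[\tau_2,\tau_3]$ makes $Y$ both $\sigma(V_1)$-measurable and independent of $(M,N)$, and one has $X_4^\psi(T) = \gamma\cos((M+N)\Psi)$.

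\textbf{The crucial symmetrization.}
Given an independent Brownian bridge $B'$ on $(t_0,t_1)$, I apply Lemma~\ref{symm} with $V_2' = B'$ and $V_2'' = -B'$ rather than with two i.i.d.\ bridges; the symmetry $-B'\stackrel{d}{=}B'$ ensures that both have the same joint law with $V_1$ as the true bridge, so Lemma~\ref{symm} is applicable. Setting $N' := -\int_{t_0}^{t_1} f'(s)\,B'_s\,ds \sim \mathcal{N}(0,\sigma_N^2)$, one obtains the two substituted values $X_4(V_1,V_2')=\gamma\cos((M+N')\Psi)$ and $X_4(V_1,V_2'')=\gamma\cos((M-N')\Psi)$, so the identity $\cos(a+b)-\cos(a-b) = -2\sin(a)\sin(b)$ yields
\[
  \EE\bigl[|X_4^\psi(T) - u(\cdot)|\bigr]
   \ge \tfrac{1}{2}\EE\bigl|X_4(V_1,V_2')-X_4(V_1,V_2'')\bigr|
   = |\gamma|\,\EE\bigl[|\sin(M\Psi)|\,|\sin(N'\Psi)|\bigr].
\]
The $\pm B'$ coupling is the main conceptual step: a standard symmetrization against two i.i.d.\ bridges would produce a half-difference of variance $\sigma_N^2/2$ inside the sine, failing the hypothesis $\tau\ge 1$ of Lemma~\ref{l2} by a factor of $\sqrt{2}$ and being incompatible with the threshold $\sqrt{12}/((t_1-t_0)^{3/2}\sqrt{\alpha_3})$ appearing in the target.

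\textbf{Two applications of Lemma~\ref{l2} and the Gaussian tail.}
Conditionally on $\Psi$, the variables $M\Psi$ and $N'\Psi$ are independent centered Gaussians with variances $(\alpha_1-\sigma_N^2)\Psi^2$ and $\sigma_N^2\Psi^2$. On the event $A := \{\sigma_N|\Psi|\ge 1\}$, both variances are $\ge 1$ (using $\alpha_1-\sigma_N^2\ge\sigma_N^2$), so Lemma~\ref{l2} applied to each factor delivers
\[
  \EE\bigl[|\sin(M\Psi)|\,|\sin(N'\Psi)|\,\1_A\bigr] \ge \tfrac{1}{8\pi}\,e^{-\pi^2/4}\,\PP(A).
\]
From $\sigma_N \ge \sqrt{\alpha_3}(t_1-t_0)^{3/2}/\sqrt{12}$ the event $A$ contains $\{Y\ge t^\ast\}$ for $t^\ast := \psi^{-1}(\sqrt{12}/((t_1-t_0)^{3/2}\sqrt{\alpha_3}))$; the bound $(t_1-t_0)^3\alpha_3 \le 1$, a consequence of $(t_1-t_0)\le\Delta$ together with $\alpha_3\le\alpha_2$, ensures $\sqrt{12}/((t_1-t_0)^{3/2}\sqrt{\alpha_3}) \ge 1 = \psi(\sqrt{2\beta})$, which places this value in $\psi(\R)$ and gives $t^\ast\ge\sqrt{2\beta}$. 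An elementary lower bound on the Gaussian density gives $\PP(Y\ge t^\ast) \ge (t^\ast/\sqrt{2\pi\beta})\exp(-2(t^\ast)^2/\beta) \ge \pi^{-1/2}\exp(-2(t^\ast)^2/\beta)$, valid since $t^\ast\ge\sqrt{2\beta}$; combining with the preceding display produces the asserted inequality $|\gamma|(8\pi^{3/2})^{-1}\exp(-2(t^\ast)^2/\beta - \pi^2/4)$.
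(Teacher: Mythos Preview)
Your proof is correct and follows essentially the same route as the paper's: the same Brownian-bridge decomposition $X_2^\psi(\tau_1)=M+N$, the same $\pm$-symmetrization via Lemma~\ref{symm} leading to a product $|\sin(M\Psi)\sin(N\Psi)|$, the same variance bounds on $N$ via $\iint\operatorname{Cov}(B_s,B_u)\,ds\,du=(t_1-t_0)^3/12$, the same two applications of Lemma~\ref{l2} on the region where both standard deviations exceed~$1$, and the same Gaussian tail estimate obtained by integrating the density over $[t^\ast,2t^\ast]$. The one inessential difference is that you introduce an independent bridge $B'$ and set $V_2'=B'$, $V_2''=-B'$, whereas the paper simply takes $V_2'=Y_2$, $V_2''=-Y_2$ on the original probability space; since only the joint law of $(V_1,V_2',V_2'')$ enters the lower bound, the two choices yield identical estimates and your extra copy is harmless but unnecessary.
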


\begin{proof}
   Define stochastic processes
$ \overline{W}, B \colon [ t_0, t_1 ] \times \Omega \to \R $
and
$ \widetilde{W} \colon \big( [ 0, t_0 ] \cup [ t_1, T ] \big) \times \Omega \to \R $
    by
\begin{equation}
  \overline{W}( t ) = \frac{ (t - t_0) }{ ( t_1 - t_0 ) }
  \cdot
  W( t_1 ) + \frac{ ( t_1 - t ) }{ ( t_1 - t_0 ) } \cdot W( t_0 )
  ,
  \qquad
  B( t )
  = W( t ) - \overline{W}( t )
\end{equation}
   for $ t \in [ t_0, t_1 ] $   
and by
$
  \widetilde{W}( t ) = W( t )
$
for 
$ t \in [ 0, t_0 ] \cup [ t_1, T ] $.
Hence, $ B $ is a Brownian bridge on $ [t_0,t_1] $ and 
$ B $ and $ ( \overline W, \widetilde W) $ are independent.

Let
$ Y_1, Y_2 \colon \Omega \to \R $
be random variables such that 
we have $ \PP $-a.s.\ that
\begin{equation}
\begin{split}
  Y_1
&
  =
  \int_0^{t_0} f(s)\, dW(s) + \int_{t_1}^{\tau_1} f(s)\, dW(s)
  +
  f( t_1 ) \, W( t_1 )
  -
  f( t_0 ) \, W( t_0 )
  -
  \int_{t_0}^{t_1} f'(s) \, \overline W(s) \, ds
  ,
\\
  Y_2
&
  = - \int_{ t_0 }^{ t_1 } f'(s) \, B(s) \, ds
\end{split}
\end{equation}
and put
\begin{equation}\label{var} 
 \sigma_i
  =
  \big(
    \EE\big[
      | Y_i |^2
    \big]
  \big)^{ 1 / 2 }
\end{equation}
for $i\in\{1,2\}$.
By the independence of $B$ and $(\overline W,\widetilde W)$ we have 
independence of $Y_1$ and $Y_2$.
Moreover, for all $ i \in \{ 1, 2 \} $
we have
$
  \PP_{ Y_i } = \mathcal{N}( 0,  \sigma_i^2 )
$.
Furthermore,
It\^o's formula proves that 
we have $ \PP $-a.s.\ that
\begin{equation} \label{v2}
  X_2^{ \psi }( \tau_1 ) = Y_1 + Y_2.
\end{equation}
Therefore, we have $ \PP $-a.s.\ that
\begin{equation}
\label{x5}
  X_4^{ \psi }( T )
  =
  \gamma
  \cdot
  \cos\bigl(
    ( Y_1 + Y_2 ) \,
    \psi\big( X_3^{ \psi }( \tau_3 ) \big)
  \bigr)
  .
\end{equation}

First, we provide estimates on the 
variances $ \left| \sigma_1 \right|^2 $ and $ \left| \sigma_2 \right|^2 $.
The fact that $ B $ is a Brownian bridge on $ [t_0,t_1] $
shows that
for all $ s, u \in [ t_0, t_1 ] $
we have
\begin{equation}
\label{eq:Brownian_Bridge}
\EE\big[
    B(s) B(u)
  \big]
  =
   \frac{
    \left( t_1 - \max\{s,u\} \right)\cdot
    \left( \min\{s,u\} - t_0 \right)
  }{
    \left( t_1 - t_0 \right)
  }
  .
\end{equation}

In addition,
the assumption 
$
  \inf_{ s \in [ 0, \nicefrac{ \tau_1 }{ 2 } ] } | f'(s) | > 0
$
implies that for all $ s, u \in [ 0, \tau_1/ 2  ] $
we have
$
  f'(s) \cdot f'(u)
  =
  \left|
    f'(s) \cdot f'(u)
  \right|
$.
The latter fact 
and \eqref{eq:Brownian_Bridge} yield
\begin{equation}
\label{eq:sigma_var_identity}
\begin{split}
   \left| \sigma_2 \right|^2
&
  =
  \EE\!\left[
    \Bigl|
      \int_{ t_0 }^{ t_1 } f'(s) \, B(s) \, ds
    \Bigr|^2
  \right]
  =
  \int_{ t_0 }^{ t_1 }
  \int_{ t_0 }^{ t_1 }
  f'(s) \, f'(u)
  \,
  \EE\big[
    B(s) B(u)
  \big]
  \,
  ds
  \, du
\\
&
  =
  \int_{ t_0 }^{ t_1 }
  \int_{ t_0 }^{ t_1 }
  \left| f'(s) \right|
  \cdot
  \left| f'(u) \right|
  \cdot
  \frac{
    (
      t_1 - \max\{ s, u \}
    )
    \cdot
    (
      \min\{ s, u \} - t_0
    )
  }{
    \left( t_1 - t_0 \right)
  }
  \, ds \, du
  .
\end{split}
\end{equation}
Furthermore, it is easy to see that
\begin{equation}\label{easy}
 \int_{ t_0 }^{ t_1 }
  \int_{ t_0 }^{ t_1 }
  \frac{
    (
      t_1 - \max\{ s, u \}
    )
    \cdot
    (
      \min\{ s, u \} - t_0
    )
  }{
    \left( t_1 - t_0 \right)
  }
  \, ds \, du
  =
 \frac{
    \left( t_1 - t_0 \right)^3
  }{
    12
  }
  . 
\end{equation}
Combining \eqref{eq:sigma_var_identity} and \eqref{easy}
proves that
\begin{equation}
\label{eq:lem_first_inequality}
  0
  <
  \frac{ \alpha_3 \left( t_1 - t_0 \right)^3 }{ 12 }
  \le
   \left| \sigma_2 \right|^2
  \le
  \frac{ \alpha_2 \left( t_1 - t_0 \right)^3 }{ 12 }
  .
\end{equation}
Next 
\eqref{eq:lem_first_inequality} 
and the assumption 
$
  t_1 - t_0 \leq \Delta
$
imply 
\begin{equation}
\label{eq:sigma2_indicator_bound}
  \left| \sigma_2 \right|^2
\le
 \alpha_2 \left| \Delta \right|^3
=
  \min\!\left\{
    \alpha_1/ 2 , 1
  \right\}
  .
\end{equation}
By \eqref{v2}, by the fact that $ Y_1 $ and $ Y_2 $ are independent 
centered normal variables, and 
by \eqref{eq:sigma2_indicator_bound} we get
\begin{equation}
\begin{split}
  \left| \sigma_1 \right|^2
& =
  \EE\big[
    | Y_1 |^2
  \big]
  =
  \EE\big[
    | Y_1 + Y_2 |^2
  \big]
  -
  \EE\big[
    | Y_2 |^2
  \big]
  -
  2
  \,
  \EE\big[
    Y_1 Y_2
  \big]
 \\ &
   =
  \EE\big[
    | X_2^{ \psi }( \tau_1 ) |^2
  \big]
  -
  \left| \sigma_2 \right|^2
  =
  \alpha_1
  - \left| \sigma_2 \right|^2
\ge
   \alpha_1/ 2 
\ge
   \left| \sigma_2 \right|^2
   ,
\end{split}
\end{equation}
   which jointly with
\eqref{eq:sigma2_indicator_bound} 
   yields 
\begin{equation}
\label{eq:lem_second_inequality}
   \left| \sigma_2 \right|^2
  \le
  \min\!\left\{
     \left| \sigma_1 \right|^2 , 1
  \right\} 
  .
\end{equation}

In the next step 
we put up the framework for an application of Lemma \ref{symm}.
Observe that \eqref{x5} and the assumption 
$
  \gamma 
  \neq 0
$
imply 
\begin{equation}\label{v3}
  \EE\Big[
    \bigl| X_4^{ \psi }( T ) - u( \widetilde{W} ) \bigr|
  \Big]
  =
  \left| \gamma \right|
  \cdot
  \EE\Big[
    \bigl|
      \cos\bigl(
        ( Y_1 + Y_2 )
        \,
        \psi\big(
          X_3^{ \psi }( \tau_3 )
        \big)
      \bigr)
      -
      \tfrac{ 1 }{ \gamma }
      \cdot
      u( \widetilde{W} )
    \bigr|
  \Big]
  .
\end{equation}
     Clearly, 
there exist measurable functions
$
  \Phi_i \colon
  C\big(
    [ 0, t_0 ] \cup [ t_1 , 1 ] , \R
  \big) \to \R
$,
$ i \in \{ 1, 2 \} $,
such that 
we have $ \PP $-a.s.\ that
$
  Y_1 =
  \Phi_1(
    \widetilde{W}
 )
$  
and
$
  X_3^{ \psi }( \tau_3 )
  =
  \Phi_2(
    \widetilde{W}
 )
  $.
Moreover, by the independence of $B$ and $(\overline W,\widetilde W)$ we have independence of $Y_2$ and $\widetilde W$. 
Therefore, 
we have
$
  \PP_{ ( \widetilde W, Y_2 ) }
=
  \PP_{ \widetilde W } \otimes \PP_{ Y_2 }
=
  \PP_{ \widetilde W } \otimes \PP_{ - Y_2 }
=
  \PP_{ ( \widetilde W, - Y_2 ) }
$.
We may thus apply Lemma~\ref{symm}
with
$
  \Omega_1 = C( [0, t_0] \cup [t_1 , 1] , \R )
$,
$
  \Omega_2 = \R
$,
$
  V_1 = \widetilde{W}
$,
$
  V_2 = V_2' = Y_2
$,
$
  V_2'' = - Y_2
$,
$
  \varphi =
  \frac{ 1 }{ \gamma } \cdot u
$,
and
$
  \Phi \colon
    C( [0, t_0] \cup [t_1 , T] , \R )
    \times
    \R \to \R
$
given by
$
\Phi(w,y)
    =
    \cos(
      (
        \Phi_1( w ) + y
      )
      \,
      \psi(
        \Phi_2( w )
      )
    )
$
for $ w \in C( [ 0, t_0 ] \cup [ t_1, T ] , \R ) $,
$ y \in \R $
to obtain 
\begin{equation}
\begin{split}
&
  \EE\Big[
    \bigl|
      \cos\bigl(
        ( Y_1 + Y_2 )
        \,
        \psi\big(
          X_3^{ \psi }( \tau_3 )
        \big)
      \bigr)
      -
      \tfrac{ 1 }{ \gamma }
      \cdot
      u(
        \widetilde W
      )
    \bigr|
  \Big]
\\
&
  \quad
  =
  \EE\Bigl[
    \bigl|
      \cos\bigl(
        \big(
          \Phi_1( \widetilde{W} ) + Y_2
        \big)
        \,
        \psi\big(
          \Phi_2( \widetilde{W} )
        \big)
      \bigr)
      -
      \varphi(\widetilde W)
    \bigr|
  \Big]
\\
  &
  \quad
  \geq
  \tfrac{ 1 }{ 2 }
  \cdot
  \EE\Big[
    \bigl|
      \cos\bigl(
        \big(
          \Phi_1( \widetilde{W} ) + Y_2
        \big)
        \,
        \psi\big(
          \Phi_2( \widetilde{W} )
        \big)
      \bigr)
      -
      \cos\bigl(
        \big(
          \Phi_1( \widetilde{W} ) - Y_2
        \big)
        \,
        \psi\big(
          \Phi_2( \widetilde{W} )
        \big)
      \bigr)
    \bigl|
  \Big]
\\
  &
  \quad
  =
  \tfrac{ 1 }{ 2 }
  \cdot
  \EE\Big[
    \bigl|
      \cos\bigl(
        ( Y_1 + Y_2 )
        \,
        \psi\big(
          X_3^{ \psi }( \tau_3 )
        \big)
      \bigr)
      -
      \cos\bigl(
        ( Y_1 - Y_2 )
        \,
        \psi( X_3^{ \psi }( \tau_3 ) )
      \bigr)
    \bigl|
  \Big].
\end{split}
\end{equation}
The latter estimate 
and the fact that
$
  \forall \, x, y \in \R 
  \colon
  \cos( x ) - \cos( y )
  =
  2 \sin( \frac{ y - x }{ 2 } ) \sin( \frac{ y + x }{ 2 } )
$
imply
\begin{equation}
\label{estim1}
\begin{split}
&
  \EE\Big[
    \bigl|
      \cos\bigl(
        ( Y_1 + Y_2 )
        \,
        \psi\big(
          X_3^{ \psi }( \tau_3 )
        \big)
      \bigr)
      -
      \tfrac{ 1 }{ \gamma }
      \cdot
      u(
        \widetilde W
      )
    \bigr|
  \Big]
\\ 
 &
 \qquad\qquad
  \geq
  \EE\Big[
    \bigl|
      \sin\bigl(
        Y_1 \,
        \psi\big(
          X_3^{ \psi }( \tau_3 )
        \big)
      \bigr)
      \cdot
      \sin\bigl(
        Y_2 \,
        \psi\big(
          X_3^{ \psi }( \tau_3 )
        \big)
      \bigr)
    \bigr|
  \Big]
  .
\end{split}
\end{equation}
Since 
$ (\overline W,\widetilde W)$, $ B $, 
and $(W(t)-W(\tau_2))_{t\in[\tau_2,\tau_3]}$ 
are independent we have independence of $Y_1$, $Y_2$, and $  X_3^{ \psi }( \tau_3 )$ as well.
Moreover, we have
$
  \PP_{ X_3^{ \psi }( \tau_3 ) } = \mathcal{N}( 0 , \beta ).
$
The latter two facts 
and \eqref{estim1}
prove
\begin{equation}
\label{eq:cos_estimate_0}
\begin{split}
&
  \EE\Big[
    \bigl|
      \cos\bigl(
        ( Y_1 + Y_2 )
        \,
        \psi\big(
          X_3^{ \psi }( \tau_3 )
        \big)
      \bigr)
      -
      \tfrac{ 1 }{ \gamma }
      \cdot
      u(
        \widetilde W
      )
    \bigr|
  \Big]
\\
  &
\quad  \geq
  \int_\R
    \EE\Big[
      \big|
        \sin\!\big(
          \psi(x) Y_1
        \big)
      \big|
    \Big]
    \cdot
    \EE\Big[
      \big|
        \sin\!\big(
          \psi(x) Y_2
        \big)
      \big|
    \Big]
    \,
  \PP_{ X_3^{ \psi }( \tau_3 ) }( dx )
\\
  & \quad=
  \int_\R
    \EE\Big[
      \big|
        \sin\!\big(
          \psi(x) Y_1
        \big)
      \big|
    \Big]
    \cdot
    \EE\Big[
      \big|
        \sin\!\big(
          \psi(x) Y_2
        \big)
      \big|
    \Big]
    \,
    \tfrac{ 1 }{ \sqrt{ 2 \pi \beta } }
    \,
    \exp\bigl( - \tfrac{ x^2 }{ 2 \beta } \bigr)
     \,dx.
 \end{split}
\end{equation} 
Next we note that \eqref{eq:lem_second_inequality} ensures that 
$ 1 / \sigma_2 \ge 1 $. 
This, 
the assumption that $ \psi $ is continuous,
the assumption that $ \lim_{ x \to \infty } \psi(x) = \infty $,
and the assumption that $ \psi( \sqrt{ 2 \beta } ) = 1 $ 
show
\begin{equation}\label{new3}
  1 / \sigma_2 \in 
  \bigl[\psi(\sqrt{2\beta}),\infty\bigr) \subset \psi(\R).
\end{equation} 
It follows
\begin{equation}
\label{eq:cos_estimate_00}
\begin{split}
&
  \int_\R
    \EE\Big[
      \big|
        \sin\!\big(
          \psi(x) Y_1
        \big)
      \big|
    \Big]
    \cdot
    \EE\Big[
      \big|
        \sin\!\big(
          \psi(x) Y_2
        \big)
      \big|
    \Big]
    \,
    \tfrac{ 1 }{ \sqrt{ 2 \pi \beta } }
    \,
    \exp\bigl( - \tfrac{ x^2 }{ 2 \beta } \bigr)
     \,dx   
\\
  &
\quad\ge
  \int_{
    \psi^{ - 1 }( 1 / \sigma_2 )
  }^{
    2 \psi^{ - 1 }( 1 / \sigma_2 )
  }
    \EE\Big[
      \big|
        \sin\!\big(
          \psi(x) Y_1
        \big)
      \big|
    \Big]
    \cdot
    \EE\Big[
      \big|
        \sin\!\big(
          \psi(x) Y_2
        \big)
      \big|
    \Big]
    \,
    \tfrac{ 1 }{ \sqrt{ 2 \pi \beta } }
    \,
    \exp\bigl( - \tfrac{ x^2 }{ 2 \beta } \bigr)
     \,dx
\\
  &
\quad \ge
  \frac{
    1
  }{
    \sqrt{ 2 \pi \beta }
  }
  \,
    \exp\Bigl(
      -
      \tfrac{ 2 }{ \beta }
      \,
      \big|
        \psi^{ - 1 }( \tfrac{ 1 }{ \sigma_2 } )
      \big|^2
    \Bigr)
  \int_{
    \psi^{ - 1 }( 1 / \sigma_2 )
  }^{
    2 \psi^{ - 1 }( 1 / \sigma_2 )
  }
  \EE\Big[
    \big|
      \sin\!\big(
        \psi(x) Y_1
      \big)
    \big|
  \Big]
  \cdot
  \EE\Big[
    \big|
      \sin\!\big(
        \psi(x) Y_2
      \big)
    \big|
  \Big]
  \, dx .
\end{split}
\end{equation}

We are now in a position to apply Lemma \ref{l2}. 
Observe that \eqref{eq:lem_second_inequality} and
the assumption that $ \psi $ is strictly increasing imply
that for all
$
  x \in [ \psi^{ - 1 }( 1/\sigma_2  ), \infty )
$,
$
  i \in \{ 1, 2 \}
$
we have 
$
  \sigma_i\psi(x) \ge
\sigma_i /\sigma_2  \ge 1
$.
   Employing
Lemma~\ref{l2} 
   we thus conclude 
that
\begin{equation}
\label{eq:sin_estimate_0}
\begin{split}
&
  \int_{
    \psi^{ - 1 }( 1 / \sigma_2 )
  }^{
    2 \psi^{ - 1 }( 1 / \sigma_2 )
  }
  \EE\big[
    | \sin(\psi(x) Y_1) |
  \big]
  \cdot
  \EE\big[
    | \sin(\psi(x) Y_2) |
  \big]
  \, dx
\\ & \qquad \ge
  \int_{
    \psi^{ - 1 }( 1 / \sigma_2 )
  }^{
    2 \psi^{ - 1 }( 1 / \sigma_2 )
  }
  \left[
    \tfrac{ 1 }{ \sqrt{ 8 \pi } }
    \cdot
    \exp\bigl(
      - \tfrac{ \pi^2 }{ 8 }
    \bigr)
  \right]^2
  \,dx
=
  \frac{ 1 }{ 8 \pi }
  \cdot
  \exp\bigl(
    - \tfrac{ \pi^2 }{ 4 }
  \bigr)
  \cdot
  \psi^{ - 1 }\big(
    \tfrac{ 1 }{ \sigma_2 }
  \big)
  .
\end{split}
\end{equation}
Furthermore,
\eqref{eq:lem_first_inequality}, \eqref{new3},
and the assumption that
$ \psi $ is strictly increasing
ensure that
\begin{equation}
\label{eq:psi_estimate_0}
  \psi^{ - 1 }\big(
    \tfrac{ 1 }{ \sigma_2 }
  \big)
\le
  \psi^{ - 1 }\Bigl(
          \tfrac{ \sqrt{12} }{ \sqrt{\alpha_3} }
       \cdot
    \tfrac{ 1 }{
      ( t_1 - t_0 )^{ 3 / 2 }
    }
  \Bigr)
  .
\end{equation}
Combining \eqref{eq:cos_estimate_0}--\eqref{eq:psi_estimate_0}
proves
\begin{equation}
\label{eq:cos_estimate_1}
\begin{split}
&
  \EE\Big[
    \bigl|
      \cos\bigl(
        ( Y_1 + Y_2 )
        \,
        \psi\big(
          X_3^{ \psi }( \tau_3 )
        \big)
      \bigr)
      -
      \tfrac{ 1 }{ \gamma }
      \cdot
      u(
        \widetilde W
      )
    \bigr|
  \Big]
\\
  & \qquad
\ge
  \frac{
    1
  }{
    \sqrt{ 2 \pi \beta }
  }
  \,
    \exp\Bigl(
      - \tfrac{ 2 }{ \beta }
      \Bigl|
        \psi^{ - 1 }\!\Bigl(
        \tfrac{ \sqrt{ 12 } }{ \sqrt{ \alpha_3 } }
        \cdot
        \tfrac{ 1 }{
          ( t_1 - t_0 )^{ 3 / 2 }
        }
        \Bigr)
      \Bigr|^2
    \Bigr)
    \cdot
  \frac{ 1 }{ 8 \pi }
  \cdot
  \exp\bigl(
    - \tfrac{ \pi^2 }{ 4 }
  \bigr)
  \cdot
  \psi^{ - 1 }\big(
    \tfrac{ 1 }{ \sigma_2 }
  \big)
  .
\end{split}
\end{equation}
    Finally, note that \eqref{new3} and 
the assumption that
$ \psi $ is strictly increasing
   imply 
$
  \sqrt{
    2 \beta
  }
\le
  \psi^{ - 1 }\big(
    \tfrac{ 1 }{ \sigma_2 }
  \big)
$.
   Hence, we derive from 
\eqref{eq:cos_estimate_1}
that
\begin{equation}
\begin{split}
&
  \EE\Big[
    \bigl|
      \cos\bigl(
        ( Y_1 + Y_2 )
        \,
        \psi\big(
          X_3^{ \psi }( \tau_3 )
        \big)
      \bigr)
      -
      \tfrac{ 1 }{ \gamma }
      \cdot
      u(
        \widetilde W
      )
    \bigr|
  \Big]
\\
  &\qquad
\ge
    \exp\Bigl(
      - \tfrac{ 2 }{ \beta }
      \left|
        \psi^{ - 1 }\!\left(
        \tfrac{ \sqrt{ 12 } }{ \sqrt{ \alpha_3 } }
        \cdot
        \tfrac{ 1 }{
          ( t_1 - t_0 )^{ 3 / 2 }
        }
        \right)
      \right|^2
    \Bigr)
    \cdot
  \frac{ 1 }{ 8 \pi^{ 3 / 2 } }
  \cdot
  \exp\bigl(
    - \tfrac{ \pi^2 }{ 4 }
  \bigr)
  .
\end{split}
\end{equation}
This and \eqref{v3} complete the proof
of the lemma.
\end{proof}

We are ready to establish our main result.

\begin{theorem}
\label{t1}
Assume the setting in Section~\ref{sec:setting},
let $ \alpha_1, \alpha_2, \alpha_3, \beta, c, C \in (0,\infty) $,
and $ \gamma \in \R $ 
be given by 
\begin{gather}
\label{newconst0}
  \alpha_1
   =
  \int_0^{ \tau_1 }
  \left| f(s) \right|^2 ds ,
\;\;
  \alpha_2
 =
  \sup_{ s \in [ 0 , \nicefrac{ \tau_1 }{ 2 } ] }
  | f'(s) |^2
  ,
\;\;
  \alpha_3
  =
  \inf_{ s \in [ 0 , \nicefrac{ \tau_1 }{ 2 } ] }
  | f'(s) |^2
  ,
\;\;
  \beta 
   = 
   \int_{\tau_2}^{\tau_3} \left| g(s) \right|^2 ds,
\\
\label{newconst}
   \gamma
   =
  \int_{ \tau_3 }^T
  h(s) \, ds,
\qquad
 c =
  \frac{
    |\gamma|
  }{
    8
    \,
    \pi^{ 3 / 2 }
    \exp( \tfrac{ \pi^2 }{ 4 } )
  }
  ,
\qquad
  C =
  \frac{
    \sqrt{ 12 }\,
    \max\{
      1 ,
      T^{ 3 / 2 }
      \sqrt{\alpha_2}
    \}
  }{
    \sqrt{\alpha_3}
    \min\{
      1 ,
      \sqrt{\tfrac{\alpha_1}{2}}
    \}
  }
  ,
\end{gather}
let $ \psi \in C^{ \infty }( \R, (0,\infty) ) $
be strictly increasing  with
$
  \liminf_{ x \to \infty } \psi( x ) = \infty
$
and
$
  \psi\big( \sqrt{ 2 \beta } \big) = 1
$,
let
$
  0 \le t_0 < t_1 \le \tau_1/ 2
$,
and let 
$
  u \colon
  C\big(
    [0,t_0] \cup [t_1,T] , \R
  \big) \to \R
$
be measurable. Then 
$
 [C/         ( t_1 - t_0 )^{ 3 / 2 },\infty) \subset
  \psi(\R)
$
and  
\begin{equation}
  \EE\Big[
    \big|
      X_4^{ \psi }( T ) -
      u\big(
        ( W(s) )_{ s \in [0, t_0] \cup [t_1, T ] }
      \big)
    \big|
  \Big]
  \geq
  c
  \cdot
  \exp \Bigl(
    -
    \tfrac{ 2 }{ \beta }
    \cdot
    \big|
      \psi^{-1}\big(
        \tfrac{ C }{
          ( t_1 - t_0 )^{ 3 / 2 }
        }
      \big)
    \big|^2
  \Bigr)
  .
\end{equation}
\end{theorem}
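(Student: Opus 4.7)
The plan is to deduce Theorem~\ref{t1} from Lemma~\ref{lem:strong_lower} by a dichotomy on the length of the forbidden interval: either $t_1-t_0\leq\Delta$, in which case the lemma applies verbatim, or $t_1-t_0>\Delta$, in which case one reduces to the lemma by shrinking the gap. A preliminary observation is that the constant $C$ in \eqref{newconst} has been tuned so that the target bound of Theorem~\ref{t1} is weaker than the bound obtained from Lemma~\ref{lem:strong_lower} (for the same $t_0,t_1$) whenever the lemma is applicable: since $\max\{1,T^{3/2}\sqrt{\alpha_2}\}\geq 1$ and $\min\{1,\sqrt{\alpha_1/2}\}\leq 1$, one has $C\geq \sqrt{12}/\sqrt{\alpha_3}$, hence $C/(t_1-t_0)^{3/2}\geq \sqrt{12}/((t_1-t_0)^{3/2}\sqrt{\alpha_3})$; the strict monotonicity of $\psi^{-1}$ combined with the negative sign of the exponent $-2/\beta$ then makes the claimed bound smaller than the lemma's bound.

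In the first case ($t_1-t_0\leq\Delta$) Lemma~\ref{lem:strong_lower} directly yields the claim via the above comparison. In the second case ($t_1-t_0>\Delta$) I would introduce $t_0':=t_0$ and $t_1':=t_0+\Delta$, which satisfy $0\leq t_0'<t_1'<t_1\leq\tau_1/2$ and $t_1'-t_0'=\Delta$. Since $[0,t_0]\cup[t_1,T]\subset[0,t_0']\cup[t_1',T]$, every measurable $u\colon C([0,t_0]\cup[t_1,T],\R)\to\R$ is, via the canonical restriction, also measurable as a function of $(W(s))_{s\in[0,t_0']\cup[t_1',T]}$; hence Lemma~\ref{lem:strong_lower} applied with $(t_0',t_1')$ in place of $(t_0,t_1)$ yields
\[
\EE\Big[\big|X_4^\psi(T)-u\big((W(s))_{s\in[0,t_0]\cup[t_1,T]}\big)\big|\Big]\;\geq\; c\,\exp\!\Bigl(-\tfrac{2}{\beta}\bigl|\psi^{-1}\bigl(\tfrac{\sqrt{12}}{\Delta^{3/2}\sqrt{\alpha_3}}\bigr)\bigr|^2\Bigr).
\]
To finish the second case I need $\sqrt{12}/(\Delta^{3/2}\sqrt{\alpha_3})\leq C/(t_1-t_0)^{3/2}$; using $t_1-t_0\leq T$ and the definition of $C$ this reduces to $T^{3/2}\min\{1,\sqrt{\alpha_1/2}\}\leq \max\{1,T^{3/2}\sqrt{\alpha_2}\}\,\Delta^{3/2}$.

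This last inequality is the main bookkeeping step and the only place where the precise form of $C$ really enters; I do not expect it to be conceptually difficult but it requires care. The key algebraic identity is $\sqrt{\alpha_2}\,\Delta^{3/2}=\min\{1,\sqrt{\alpha_1/2}\}$, which follows from the definition $\Delta=|\min\{\alpha_1/(2\alpha_2),1/\alpha_2\}|^{1/3}$ by a short case distinction on which term inside the minimum is attained. Combined with $\max\{1,T^{3/2}\sqrt{\alpha_2}\}\geq T^{3/2}\sqrt{\alpha_2}$ this immediately gives the desired inequality. The auxiliary inclusion $[C/(t_1-t_0)^{3/2},\infty)\subset\psi(\R)$ is handled in parallel: continuity, strict monotonicity, and $\psi(\sqrt{2\beta})=1$ together with $\liminf_{x\to\infty}\psi(x)=\infty$ imply $[1,\infty)\subset\psi(\R)$, while in both cases $C/(t_1-t_0)^{3/2}\geq 1$ is inherited from the corresponding bound established inside the proof of Lemma~\ref{lem:strong_lower}.
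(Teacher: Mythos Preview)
Your proposal is correct and follows essentially the same approach as the paper: a case distinction on whether $t_1-t_0\le\Delta$ or $t_1-t_0>\Delta$, invoking Lemma~\ref{lem:strong_lower} in each case and then comparing $\sqrt{12}/((t_1-t_0)^{3/2}\sqrt{\alpha_3})$ (respectively $\sqrt{12}/(\Delta^{3/2}\sqrt{\alpha_3})$) with $C/(t_1-t_0)^{3/2}$ via the identity $\sqrt{\alpha_2}\,\Delta^{3/2}=\min\{1,\sqrt{\alpha_1/2}\}$. Your write-up is in fact more explicit than the paper's in the second case, where you spell out the shrinking $t_1'=t_0+\Delta$ and the restriction argument that the paper leaves implicit.
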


\begin{proof}
Let
$  \Delta \in (0,\infty) $
be  given by \eqref{alphas}.

First, assume $ t_1 - t_0 \leq \Delta $. 
By Lemma~\ref{lem:strong_lower} and by the properties of $\psi$ we then have
\begin{equation}\label{sub1}
\Bigl[\tfrac{
          \sqrt{ 12 }
        }{
          ( t_1 - t_0 )^{ 3 / 2 }
          \sqrt{ \alpha_3 }
        }, \infty\Bigr)
        \subset
        \psi(\R)
\end{equation}
        and 
\begin{equation}
\label{eq:lem_appl_diff_t_small}
  \EE\Big[
    \big|
      X_4^{ \psi }( T ) -
      u\big(
        ( W(s) )_{ s \in [0, t_0] \cup [t_1, T ] }
      \big)
    \big|
  \Big]
\geq
   c\cdot
    \exp\Bigl(
      - \tfrac{ 2 }{ \beta }
      \left|
        \psi^{ - 1 }\!\left(
        \tfrac{
          \sqrt{ 12 }
        }{
          ( t_1 - t_0 )^{ 3 / 2 }
          \sqrt{ \alpha_3 }
        }
        \right)
      \right|^2
    \Bigr)
    .
\end{equation}
It remains to observe that
\begin{equation}\label{cc3}
\frac{
   \sqrt{ 12 }
        }{
          ( t_1 - t_0 )^{ 3 / 2 }
          \sqrt{ \alpha_3 }
        }
        \le 
        \frac{C}{ ( t_1 - t_0 )^{ 3 / 2 } },
\end{equation}
and that $ \psi^{-1} $ is  strictly increasing to obtain the desired result in this case.

Next, assume that $t_1 - t_0 >\Delta$.
Then 
Lemma~\ref{lem:strong_lower} together with the properties of $\psi$ yield
\begin{equation}\label{cc4}
\bigl[\tfrac{
          \sqrt{ 12 }
        }{
          \Delta^{ 3 / 2 }
          \sqrt{ \alpha_3 }
        }
        ,\infty\bigr) 
        \subset
        \psi(\R)
      \end{equation}  
        and
\begin{equation}
  \EE\Big[
    \big|
      X_4^{ \psi }( T ) -
      u\big(
        ( W(s) )_{ s \in [0, t_0] \cup [t_1, T ] }
      \big)
    \big|
  \Big]
\geq
   c\cdot 
    \exp\Bigl(
      - \tfrac{ 2 }{ \beta }
      \left|
        \psi^{ - 1 }\!\left(
        \tfrac{
          \sqrt{ 12 }
        }{
          \Delta^{ 3 / 2 }
          \sqrt{ \alpha_3 }
        }
        \right)
      \right|^2
    \Bigr)
    .
\end{equation}
Since
\begin{equation}\label{ddd4}
\frac{
          \sqrt{ 12 }
        }{
          \Delta^{ 3 / 2 }
          \sqrt{ \alpha_3 }
        }
  = \frac{\sqrt{ 12 }\sqrt{\alpha_2}}{ \sqrt{ \alpha_3 }\min\{1,\sqrt{\tfrac{\alpha_1}{2}}\}}
  \le \frac{\sqrt{ 12 }\sqrt{\alpha_2}}{ \sqrt{ \alpha_3 }\min\{1,\sqrt{\tfrac{\alpha_1}{2}}\}}\cdot \frac{T^{3/2}}{(t_1-t_0)^{3/2}}\le  \frac{C}{ ( t_1 - t_0 )^{ 3 / 2 } }
\end{equation}
and since $ \psi^{-1} $ is strictly increasing, we obtain the claimed result in the actual case as well.
\end{proof}

Theorem~\ref{t1} implies uniform lower bounds for the error of
strong approximations of the solution processes
$ X^{ \psi } $
in Section~\ref{sec:setting}
at time $ T $ based on a finite number of function values of the driving
Brownian motion $ W $.
This is, in particular, the subject of the following
corollary.

\begin{cor}
\label{cor1}
Assume the setting in Section~\ref{sec:setting}, 
let $ \alpha_1, \alpha_2, \alpha_3, \beta, c, C \in (0,\infty) $,
and $ \gamma \in \R $ 
be given by 
\begin{gather}
  \alpha_1
   =
  \int_0^{ \tau_1 }
  \left| f(s) \right|^2 ds ,
\;\;
  \alpha_2
 =
  \sup_{ s \in [ 0 , \nicefrac{ \tau_1 }{ 2 } ] }
  | f'(s) |^2
  ,
\;\;
  \alpha_3
  =
  \inf_{ s \in [ 0 , \nicefrac{ \tau_1 }{ 2 } ] }
  | f'(s) |^2
  ,
\;\;
  \beta 
   = 
   \int_{\tau_2}^{\tau_3} \left| g(s) \right|^2 ds,
\\
   \gamma
   =
  \int_{ \tau_3 }^T
  h(s) \, ds,
\qquad
 c =
  \frac{
    |\gamma|
  }{
    8
    \,
    \pi^{ 3 / 2 }
    \exp( \tfrac{ \pi^2 }{ 4 } )
  }
  ,
\qquad
  C =
  \frac{
    \sqrt{ 12 }\,
    \max\{
      1 ,
      T^{ 3 / 2 }
      \sqrt{\alpha_2}
    \}
  }{
    \sqrt{\alpha_3}
    \min\{
      1 ,
      \sqrt{\tfrac{\alpha_1}{2}}
    \}
  }
  ,
\end{gather}
and let $ \psi \in C^{ \infty }( \R, (0,\infty) ) $
be strictly increasing with
$
  \liminf_{ x \to \infty } \psi( x ) = \infty
$
and
$
  \psi\big( \sqrt{ 2 \beta } \big) = 1
$.
Then
for all
$
  n \in \N \cap
 [  2 T / \tau_1  , \infty )
$
and all measurable
$
  u \colon
  C ([ T/n , T ] , \R ) \to \R
$
we have $[ C n^{ 3 / 2 } T^{ - 3 / 2 },\infty)\subset\psi(\R)$ and
\begin{equation}
\label{eq:cor1_eq1}
  \EE\Big[
    \bigl|
      X_4^{ \psi }( T )
      -
      u\big(
        ( W(s) )_{ s \in [ \nicefrac{ T }{ n }, T ] }
      \big)
    \bigr|
  \Big]
  \geq
  c
  \cdot
  \exp\!\left(
    -
    \tfrac{ 2 }{ \beta }
    \cdot
    \left|
      \psi^{ - 1 }\!\left( \tfrac{ C }{ T^{ 3 / 2 } } \cdot n^{ 3 / 2 } \right)
    \right|^2
  \right)
  ,
\end{equation}
for all
$
  n \in \N
$,
$ s_1, \dots, s_n \in [ 0, T ] $
and all measurable
$
  u \colon
  \R^n \to \R
$
we have $[ 8 C n^{ 3 / 2 } ( \tau_1 )^{ - 3 / 2 } , \infty)\subset\psi(\R)$ and
\begin{equation}
\label{eq:cor1_eq2}
  \EE\Big[
    \bigl|
      X^{ \psi }_4(T)
      -
      u\big(
        W( s_1 ),
        \dots ,
        W( s_n )
      \big)
    \bigr|
  \Big]
  \geq
  c
  \cdot
  \exp\Bigl(
    -
    \tfrac{ 2 }{ \beta }
    \cdot
    \bigl|
      \psi^{ - 1 }\!\bigl( \tfrac{ 8 \, C }{ ( \tau_1 )^{ 3 / 2 } } \cdot n^{ 3 / 2 } \bigr)
    \bigr|^2
  \Bigr)
  ,
\end{equation}
and for all
$ n \in \N \cap [  2 T/ \tau_1 , \infty ) $,
$ s_1, \dots, s_n \in [0,T] $
and all measurable
$ u \colon \R^n \times C( [T/n , T ] , \R ) \to \R $
we have $[ 2^{ 3 / 2 }\, C \cdot n^3 / T^{ 3 / 2 } ,\infty)\subset\psi(\R)$ and
\begin{equation}
\label{eq:cor1_eq3}
  \EE\Big[
    \bigl|
      X^{ \psi }_4( T )
      -
      u\big(
        W( s_1 ),
        \dots ,
        W( s_n )
        ,
        ( W(s) )_{ s \in [ \nicefrac{ T }{ n } , T ] }
      \big)
    \bigr|
  \Big]
  \geq
  c
  \cdot
  \exp\Bigl(
    -
    \tfrac{ 2 }{ \beta }
    \cdot
    \bigl|
      \psi^{ - 1 }\bigl( \tfrac{ 2^{ 3 / 2 } \, C }{ T^{ 3 / 2 } } \cdot n^3 \bigr)
    \bigr|^2
  \Bigr)
  .
\end{equation}
\end{cor}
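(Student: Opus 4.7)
My plan is to derive all three bounds by reducing each case to a single application of Theorem~\ref{t1} with an appropriately chosen pair $t_0 < t_1$ in $[0,\tau_1/2]$. The guiding observation is that any measurable function of a given collection of observations of $W$ is also a measurable function of the richer data $(W(s))_{s\in[0,t_0]\cup[t_1,T]}$, provided each original evaluation point lies in $[0,t_0]\cup[t_1,T]$; this lets us invoke Theorem~\ref{t1} as soon as we locate an interval $(t_0,t_1)\subseteq[0,\tau_1/2]$ that contains none of the evaluation nodes. Since the lower bound in Theorem~\ref{t1} is monotonically decreasing in $1/(t_1-t_0)$ (through the strictly increasing function $\psi^{-1}$), the only quantitative work is to lower-bound $t_1-t_0$ in terms of $n$ in each setting.

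For \eqref{eq:cor1_eq1} I would simply take $t_0=0$ and $t_1=T/n$; the assumption $n\ge 2T/\tau_1$ ensures $t_1\le\tau_1/2$, and $(W(s))_{s\in[T/n,T]}$ is trivially a measurable functional of $(W(s))_{s\in[0,t_0]\cup[t_1,T]}$, so Theorem~\ref{t1} with $(t_1-t_0)^{3/2}=T^{3/2}/n^{3/2}$ yields the bound and the inclusion $[Cn^{3/2}/T^{3/2},\infty)\subset\psi(\R)$ directly.

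For \eqref{eq:cor1_eq2} the key step is a pigeonhole argument. I would enumerate the at most $n+2$ points in $\{0,\tau_1/2\}\cup(\{s_1,\ldots,s_n\}\cap[0,\tau_1/2])$ in increasing order and pick two consecutive points $t_0<t_1$ whose gap has length at least $(\tau_1/2)/(n+1)\ge\tau_1/(4n)$ (valid for every $n\in\N$). By construction every $s_i$ lies either outside $[0,\tau_1/2]$ or coincides with a node of this partition, so $s_i\notin(t_0,t_1)$ and hence $u(W(s_1),\ldots,W(s_n))$ is a measurable functional of $(W(s))_{s\in[0,t_0]\cup[t_1,T]}$. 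Applying Theorem~\ref{t1}, bounding $1/(t_1-t_0)^{3/2}\le 8n^{3/2}/\tau_1^{3/2}$, and using the monotonicity of $\psi^{-1}$ then delivers \eqref{eq:cor1_eq2} and the corresponding inclusion in $\psi(\R)$.

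The argument for \eqref{eq:cor1_eq3} is analogous, but since $u$ also consumes the full path $(W(s))_{s\in[T/n,T]}$, I additionally need $t_1\le T/n$ so that this observation window is contained in $[t_1,T]$. I would therefore run exactly the same pigeonhole argument on the sub-interval $[0,T/n]$ instead of $[0,\tau_1/2]$, which under $n\ge 2T/\tau_1$ is still a sub-interval of $[0,\tau_1/2]$, obtaining a gap of length at least $(T/n)/(n+1)\ge T/(2n^2)$. This yields $1/(t_1-t_0)^{3/2}\le 2^{3/2}n^3/T^{3/2}$ and the claim follows as before. I do not foresee any serious obstacle; the only point requiring care is the bookkeeping of the inclusions $[C/(t_1-t_0)^{3/2},\infty)\subset\psi(\R)$ provided by Theorem~\ref{t1}, which transfer to the announced sets precisely because the respective upper bounds on $1/(t_1-t_0)^{3/2}$ are $n^{3/2}/T^{3/2}$, $8n^{3/2}/\tau_1^{3/2}$, and $2^{3/2}n^3/T^{3/2}$.
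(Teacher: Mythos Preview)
Your proposal is correct and follows essentially the same approach as the paper: apply Theorem~\ref{t1} with $t_0=0$, $t_1=T/n$ for \eqref{eq:cor1_eq1}; use a pigeonhole argument on the nodes in $[0,\tau_1/2]$ (the paper adjoins $0$ and $\tau_1/2$ to $\{s_1,\dots,s_n\}$ and finds a gap of length at least $\tau_1/(2(n+1))\ge\tau_1/(4n)$) for \eqref{eq:cor1_eq2}; and the paper likewise declares \eqref{eq:cor1_eq3} ``analogous,'' which your pigeonhole on $[0,T/n]$ makes explicit with the same resulting bound $2^{3/2}n^3/T^{3/2}$.
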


\begin{proof}
Let $ n \in \N $ with
$
  T / n  \leq  \tau_1 /2 
$
and let
$
  u \colon
  C(
    [  T / n  , T ] , \R
  ) \to \R
$
be a measurable mapping.
Then
Theorem~\ref{t1} 
with $t_0=0$ and $t_1=T/n$ 
implies $[C \cdot n^{ 3 / 2 }/ T^{ 3 / 2 },\infty)\subset\psi(\R)$ 
and 
\begin{equation}
\begin{split}
  \EE\Big[
    \big|
      X_4^{ \psi }( T ) -
      u\big(
        ( W(s) )_{ s \in [ \nicefrac{ T }{ n } , T ] }
      \big)
    \big|
  \Big]
& \geq
  c
  \cdot
  \exp\!\left(
    -
    \tfrac{ 2 }{ \beta }
    \cdot
    \big|
      \psi^{-1}\big(
        \tfrac{ C }{
          (T / n )^{ 3 / 2 }
        }
      \big)
    \big|^2
  \right)
  .
\end{split}
\end{equation}
This establishes \eqref{eq:cor1_eq1}.

Next let $ n \in \N $,  
$ s_1, \dots, s_n \in [0,T] $ and 
let 
$ u \colon \R^{ n + 2 } \to \R $
be a measurable mapping.
Then  there exist
$ \hat{s}_0, \hat{s}_1, \dots, \hat{s}_{ n + 1 } \in [0,T] $
such that
$
  0 = \hat{s}_0 \leq \hat{s}_1 \leq \dots \leq \hat{s}_{ n + 1 }
$
and 
$
  \{ \hat{s}_0 , \hat{s}_1, \dots, \hat{s}_{ n + 1 } \}
  \supseteq
  \{ s_1, \dots, s_n,\tau_1/2 \}
$.
In particular, there exists
$ i \in \{ 1, 2, \dots, n+1 \} $
such that
\begin{equation}
\label{eq:delta_hat_s}
\hat s_i \le \tfrac{\tau_1}{2}\quad\text{and}\quad
  \hat{s}_i - \hat{s}_{ i - 1 }
  \geq
  \tfrac{ \tau_1 }{ 2 \left( n + 1 \right) }
  .
\end{equation}
Using Theorem~\ref{t1}
with 
$ t_0=\hat s_{i-1}$ and $t_1 = \hat s_i$ 
and the fact that $ \psi^{ - 1 } $
is increasing we conclude 
that $[ 8C \,n^{ 3 / 2 }/ \tau_1 ^{ 3 / 2 },\infty)\subset 
[ C (2(n+1))^{3/2}/ \tau_1 ^{ 3 / 2 },\infty)
\subset [ C / (\hat{s}_i - \hat{s}_{i-1})^{ 3 / 2 },\infty)\subset\psi(\R) $ and
\begin{equation}
\begin{split}
&
  \EE\Big[
    \bigl|
      X_4^{ \psi }(T)
      -
      u\big(
        W( \hat{s}_0 )
        ,
        W( \hat{s}_1 )
        ,
        \dots
        ,
        W( \hat{s}_n )
        ,
        W( \hat{s}_{ n + 1 } )
      \big)
    \bigr|
  \Big]
  \\
  & \qquad\qquad
  \geq 
  c
  \cdot
  \exp\Bigl(
    -
    \tfrac{ 2 }{ \beta }
    \cdot
    \bigl|
      \psi^{ - 1 }\bigl(
        \tfrac{ C }{
          \left( \hat{s}_i - \hat{s}_{ i - 1 } \right)^{ 3 / 2 }
        }
      \bigr)
    \bigr|^2
  \Bigr)
\geq
c  \cdot
  \exp\Bigl(
    -
    \tfrac{ 2 }{ \beta }
    \cdot
    \bigl|
      \psi^{ - 1 }\bigl(
        \tfrac{
          8 \, C
        }{
           \tau_1 ^{ 3 / 2 }
        }
        \cdot n^{ 3 / 2 }
      \bigr)
    \bigr|^2
  \Bigr)
  .
\end{split}
\end{equation}
This implies \eqref{eq:cor1_eq2}.

The proof of \eqref{eq:cor1_eq3}
is analogous to the proofs of \eqref{eq:cor1_eq1}
and \eqref{eq:cor1_eq2}.
\end{proof}

In Lemma~\ref{lem:speed_psi} below
we characterize a non-polynomial decay
of the lower bounds in \eqref{eq:cor1_eq1}, \eqref{eq:cor1_eq2}, and \eqref{eq:cor1_eq3}
in Corollary~\ref{cor1} in terms of a exponential growth property of the function $\psi$.
To do so, we recall the following elementary fact.

\begin{lemma}
\label{lem:limits}
Let $ \varphi_1 \colon \R \to [0,\infty) $ be non-decreasing,
let $ \varphi_2 \colon \R \to [0,\infty) $ be non-increasing,
and assume that
$
  \liminf_{ \N \ni n \to \infty }
  \left[
   \varphi_1 ( n ) \cdot \varphi_2( n +1)
  \right]
  = \infty
$.
Then
$
  \liminf_{ \R \ni x \to \infty }
  \left[
    \varphi_1( x ) \cdot \varphi_2( x )
  \right]
  = \infty
$.
\end{lemma}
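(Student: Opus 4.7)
The plan is to exploit the monotonicity of $\varphi_1$ and $\varphi_2$ to sandwich the continuous-variable product $\varphi_1(x) \cdot \varphi_2(x)$ from below by a discrete-variable product $\varphi_1(n) \cdot \varphi_2(n+1)$, to which the hypothesis directly applies. The whole argument is a one-line comparison; there is no real obstacle.

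More precisely, I would fix $x \in [1,\infty)$ and set $n = \lfloor x \rfloor \in \N$, so that $n \leq x \leq n+1$. Since $\varphi_1$ is non-decreasing, $\varphi_1(x) \geq \varphi_1(n) \geq 0$, and since $\varphi_2$ is non-increasing, $\varphi_2(x) \geq \varphi_2(n+1) \geq 0$. Multiplying these two nonnegative inequalities yields
\begin{equation}
  \varphi_1(x) \cdot \varphi_2(x)
  \;\geq\;
  \varphi_1(\lfloor x \rfloor) \cdot \varphi_2(\lfloor x \rfloor + 1).
\end{equation}

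Since $\lfloor x \rfloor \to \infty$ in $\N$ as $x \to \infty$ in $\R$, the assumption
$\liminf_{\N \ni n \to \infty} [\varphi_1(n) \cdot \varphi_2(n+1)] = \infty$
transfers verbatim to the right-hand side of the displayed inequality, giving
$\liminf_{\R \ni x \to \infty} [\varphi_1(x) \cdot \varphi_2(x)] = \infty$, as claimed. The only mildly delicate point is that we must form a product of two inequalities with nonnegative entries; this is guaranteed by the hypothesis that both $\varphi_1$ and $\varphi_2$ take values in $[0,\infty)$, so no sign-flip issue arises.
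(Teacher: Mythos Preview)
Your proof is correct and is essentially identical to the paper's own argument: both use the inequality $\varphi_1(x)\cdot\varphi_2(x)\ge\varphi_1(\lfloor x\rfloor)\cdot\varphi_2(\lfloor x\rfloor+1)$ obtained from the monotonicity assumptions, and then pass to the $\liminf$. The only cosmetic difference is that you explicitly restrict to $x\ge 1$ so that $\lfloor x\rfloor\in\N$, which is a harmless and sensible precaution.
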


\begin{proof}[Proof]
By the properties of $ \varphi_1 $ and $ \varphi_2 $ we have
for all $ x \in \R $ that
$ \varphi_1(x) \cdot \varphi_2(x) \ge \varphi_1( \lfloor x \rfloor ) \cdot \varphi_2( \lfloor x \rfloor + 1 ) $. 
Hence
\begin{equation}
  \liminf_{ \R \ni x \to \infty }
  \left[
    \varphi_1( x ) \cdot \varphi_2( x )
  \right]
\ge
  \liminf_{ \N \ni n \to \infty }
  \left[
    \varphi_1( n ) \cdot \varphi_2( n + 1 )
  \right]
  = \infty
  ,
\end{equation}
which completes the proof.
\end{proof}

\begin{rem}
We note that in general it is not possible
to replace
in Lemma~\ref{lem:limits}
the assumption 
$
  \displaystyle{\liminf_{ \N \ni n \to \infty }
  \big[
    \varphi_1( n ) \cdot  \varphi_2( n + 1 )
  \big]
  = \infty}
$
by the weaker assumption 
$
   \displaystyle\liminf_{ \N \ni n \to \infty }
  \left[
     \varphi_1( n ) \cdot  \varphi_2( n )
  \right]
  = \infty
$.
Indeed, using suitable mollifiers
one can construct 
$ \varphi_1,\varphi_2\in C^\infty(\R, [0,\infty)) $ such that 
$\varphi_1$ is non-decreasing with
$
  \forall \, n \in \Z \,\,
  \forall \, 
  x \in [ n, n +  1/2 ]
  \colon
  \varphi_1( x ) = 
  \exp\!\big(
    ( n +  1/2 )^2
  \big)
$
and such that $\varphi_2$ is non-increasing with
$
  \forall \, n \in \Z \,\,
  \forall \, 
  x \in [ n -  1 / 2 , n ]
  \colon
  \varphi_2( x ) = 
  \exp( - n^2
  )
$.
Then
\begin{equation}
\begin{aligned}\label{cc45}
  \liminf_{ \N \ni n \to \infty }
  \left[
   \varphi_1( n ) \cdot \varphi_2( n )
  \right]
 &  =
  \liminf_{ \N \ni n \to \infty }
  \exp\!\big(
    (n + 1/2)^2
    - n^2
  \big)
  =
  \infty,\\
  \liminf_{ \N \ni n \to \infty }
  \left[
    \varphi_1( n ) \cdot \varphi_2( n + 1 )
  \right]
 & =
  \liminf_{ \N \ni n \to \infty }
  \exp\!\big(
   ( n + 1/2)^2
    -
    ( n + 1)^2
  \big)
  =
  0,\\
  \liminf_{ \R \ni x \to \infty }
  \left[
    \varphi_1( x ) \cdot \varphi_2( x )
  \right]
 &  \le
  \liminf_{ \N \ni n \to \infty }
  \left[
    \varphi_1( n + 1/2 ) \cdot \varphi_2( n + 1/2 )
  \right]
  =
  0.
  \end{aligned}
  \end{equation}
\end{rem}

\begin{lemma}
\label{lem:speed_psi}
Let $ \eta_1, \eta_2, \eta_3 \in (0,\infty) $
and let $ \psi \colon \R \to (0,\infty) $
be strictly increasing and continuous
with 
$
  \liminf_{ x \to \infty } \psi( x ) = \infty
$.
Then
$
  \forall \, q \in (0,\infty) 
  \colon
  \liminf_{ \N \ni n \to \infty }
$
$
  \big[
    n^q \cdot
    \exp\!\big(
      - \eta_1
      \left| \psi^{ - 1 }( \eta_2 n^{ \eta_3 } ) \right|^2
    \big)
  \big]
  = \infty
$
if and only if
$
  \forall \, q \in (0,\infty) 
  \colon
  \liminf_{ \R \ni x \to \infty }
  \left[
    \psi(x) \cdot \exp( - q x^2 )
  \right]
  = \infty
$.
\end{lemma}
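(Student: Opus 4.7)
The plan is to prove the two implications separately; the easy direction is a direct substitution, while the harder direction uses Lemma~\ref{lem:limits} to pass from an integer index to a continuous variable.

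For the implication ``$\Leftarrow$'' (from the statement on $\psi(x)\exp(-qx^2)$ to the statement on $n^q\exp(-\eta_1|\psi^{-1}(\eta_2 n^{\eta_3})|^2)$), fix $q \in (0,\infty)$ and set $y_n = \psi^{-1}(\eta_2 n^{\eta_3})$ for all sufficiently large $n \in \N$. The assumptions on $\psi$ (continuous, strictly increasing, $\lim_{x\to\infty}\psi(x) = \infty$) ensure that $\psi^{-1}$ is defined on a right-unbounded interval and that $y_n \to \infty$ as $n \to \infty$. From $\psi(y_n) = \eta_2 n^{\eta_3}$ one obtains $n^q = \eta_2^{-q/\eta_3}\psi(y_n)^{q/\eta_3}$, so
\begin{equation}
n^q \exp(-\eta_1 y_n^2) = \eta_2^{-q/\eta_3}\bigl[\psi(y_n) \exp\bigl(-(\eta_1\eta_3/q)\,y_n^2\bigr)\bigr]^{q/\eta_3}.
\end{equation}
Applying the hypothesis with $\tilde q := \eta_1\eta_3/q \in (0,\infty)$, the bracket tends to $\infty$ along $y_n \to \infty$, and raising to the positive power $q/\eta_3$ preserves this.

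For the implication ``$\Rightarrow$'', fix $\tilde q \in (0,\infty)$. I plan to apply Lemma~\ref{lem:limits} with $\varphi_1 := \psi$ (non-decreasing and $\geq 0$, since $\psi > 0$) and $\varphi_2(x) := \exp(-\tilde q\,(\max\{x,0\})^2)$ (non-increasing, $\geq 0$, and equal to $\exp(-\tilde q x^2)$ for $x \geq 0$), so that it suffices to show $\liminf_{\N \ni n \to \infty}\psi(n)\,\exp(-\tilde q(n+1)^2) = \infty$. For large $n \in \N$, define $m(n) := \lfloor(\psi(n)/\eta_2)^{1/\eta_3}\rfloor + 1 \in \N$; then $(m(n)-1)^{\eta_3} \leq \psi(n)/\eta_2 < m(n)^{\eta_3}$, so with $y_m := \psi^{-1}(\eta_2 m^{\eta_3})$ one has $n < y_{m(n)}$, and the elementary inequalities
\begin{equation}
\psi(n) \geq \eta_2(m(n)-1)^{\eta_3} \geq 2^{-\eta_3}\eta_2\, m(n)^{\eta_3},\qquad (n+1)^2 \leq 2\, y_{m(n)}^2
\end{equation}
(the latter valid once $y_{m(n)}$ is large enough) yield
\begin{equation}
\psi(n)\exp(-\tilde q(n+1)^2) \geq 2^{-\eta_3}\eta_2\, m(n)^{\eta_3}\exp(-2\tilde q\, y_{m(n)}^2).
\end{equation}

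Since $m(n) \to \infty$ as $n \to \infty$, it remains to show $m^{\eta_3}\exp(-2\tilde q\,y_m^2) \to \infty$ as $\N \ni m \to \infty$. Rewriting this as $\bigl[m^{\eta_1\eta_3/(2\tilde q)}\exp(-\eta_1 y_m^2)\bigr]^{2\tilde q/\eta_1}$ and invoking the hypothesis with $q = \eta_1\eta_3/(2\tilde q) \in (0,\infty)$, the bracket tends to $\infty$, and the positive exponent $2\tilde q/\eta_1$ preserves the limit. Lemma~\ref{lem:limits} then delivers $\liminf_{\R \ni x \to \infty}\psi(x)\exp(-\tilde q x^2) = \infty$, and since $\tilde q > 0$ was arbitrary, the conclusion follows. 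The main nuisance is the discretization step: defining $m(n)$ correctly so that $\psi(n)$ is sandwiched between consecutive values $\eta_2(m-1)^{\eta_3}$ and $\eta_2 m^{\eta_3}$, and verifying that the multiplicative losses $2^{-\eta_3}$ and the factor $2$ in the exponent are harmless when composed with the super-polynomial growth provided by the hypothesis.
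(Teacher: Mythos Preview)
Your proof is correct, but it follows a different route from the paper's. The paper applies Lemma~\ref{lem:limits} with $\varphi_1(x)=x^q$ and $\varphi_2(x)=\exp(-\eta_1|\psi^{-1}(\eta_2 x^{\eta_3})|^2)$ to immediately replace the integer limit in the \emph{first} statement by a real limit (the reverse implication being trivial, and the shift from $\varphi_2(n)$ to $\varphi_2(n+1)$ being absorbed by the ``$\forall q$'' quantifier). Once both sides live over $\R$, a short chain of continuous substitutions (replacing $x^{\eta_3}$ by $x$, absorbing $\eta_2$, taking $q$th roots, and substituting $x\mapsto\psi(x)$) gives the equivalence at once, handling both directions symmetrically.

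You instead treat the two implications asymmetrically: the ``$\Leftarrow$'' direction is done by direct substitution along $y_n=\psi^{-1}(\eta_2 n^{\eta_3})$ without invoking Lemma~\ref{lem:limits}, which is arguably cleaner than the paper for that half; but for ``$\Rightarrow$'' you apply Lemma~\ref{lem:limits} to the \emph{second} statement with $\varphi_1=\psi$, which forces you through the discretization $m(n)=\lfloor(\psi(n)/\eta_2)^{1/\eta_3}\rfloor+1$ and the attendant bookkeeping with the constants $2^{-\eta_3}$ and the factor $2$ in the exponent. The paper's choice of where to invoke Lemma~\ref{lem:limits} avoids this entirely, since after passing to $\R$ all manipulations become exact changes of variable rather than floor-based estimates.
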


\begin{proof}
We use Lemma~\ref{lem:limits} 
with
$
\varphi_1(x) = x^q
$
and
$
  \varphi_2(x) = \exp(- \eta_1
      \left| \psi^{ - 1 }( \eta_2 x^{ \eta_3 } ) \right|^2
    \big)
$
for $ x \in \R $
to obtain
\begin{equation}
\begin{aligned}
&
\Big(
  \forall \, q \in (0,\infty) \colon
  \liminf_{ \N \ni n \to \infty }
  \big[
    n^q \cdot
    \exp\!\big(
      - \eta_1
      \left| \psi^{ - 1 }( \eta_2 n^{ \eta_3 } ) \right|^2
    \big)
  \big]
  = \infty
\Big)
\\
\Leftrightarrow
\;
&
\Big(
  \forall \, q \in (0,\infty) \colon
  \liminf_{ \R \ni x \to \infty }
  \big[
    x^q \cdot
    \exp\!\big(
      - \eta_1
      \left| \psi^{ - 1 }( \eta_2 x^{ \eta_3 } ) \right|^2
    \big)
  \big]
  = \infty
\Big).
\end{aligned}
\end{equation}
Furthermore, 
\begin{equation}
\begin{split}
&
\Big(
  \forall \, q \in (0,\infty) \colon
  \liminf_{ \R \ni x \to \infty }
  \big[
    x^q \cdot
    \exp\!\big(
      - \eta_1
      \left| \psi^{ - 1 }( \eta_2 x^{ \eta_3 } ) \right|^2
    \big)
  \big]
  = \infty
\Big)
\\
\Leftrightarrow
\;
&
\Big(
  \forall \, q \in (0,\infty) \colon
  \liminf_{ \R \ni x \to \infty }
  \big[
    x^{ \eta_3 q }
    \cdot
    \exp\!\big(
      - \eta_1
      \left| \psi^{ - 1 }( \eta_2 x^{ \eta_3 } ) \right|^2
    \big)
  \big]
  = \infty
\Big)
\\
\Leftrightarrow
\;
&
\Big(
  \forall \, q \in (0,\infty) \colon
  \liminf_{ \R \ni x \to \infty }
  \big[
    x^q
    \cdot
    \exp\!\big(
      - \eta_1
      \left| \psi^{ - 1 }( \eta_2 x ) \right|^2
    \big)
  \big]
  = \infty
\Big)
\\
\Leftrightarrow
\;
&
\Big(
  \forall \, q \in (0,\infty) \colon
  \liminf_{ \R \ni x \to \infty }
  \big[
    x
    \cdot
    \exp\!\big(
      -
      \tfrac{ \eta_1 }{ q }
      \left| \psi^{ - 1 }( \eta_2 x ) \right|^2
    \big)
  \big]
  = \infty
\Big)\\
\Leftrightarrow
\;
&
\Big(
  \forall \, q \in (0,\infty) \colon
  \liminf_{ \R \ni x \to \infty }
  \big[
    x
    \cdot
    \exp\!\big(
      -
      \tfrac{ \eta_1 }{ q }
      \left| \psi^{ - 1 }( x ) \right|^2
    \big)
  \big]
  = \infty
\Big)
.
\end{split}
\end{equation}
Using the properties of $\psi$ we have
\begin{equation}
\begin{split}
&
\Big(
  \forall \, q \in (0,\infty) \colon
  \liminf_{ \R \ni x \to \infty }
  \big[
    x
    \cdot
    \exp\!\big(
      -
      \tfrac{ \eta_1 }{ q }
      \left| \psi^{ - 1 }( x ) \right|^2
    \big)
  \big]
  = \infty
\Big)
\\
\Leftrightarrow
\;
&
\Big(
  \forall \, q \in (0,\infty) \colon
  \liminf_{ \R \ni x \to \infty }
  \big[
    \psi( x )
    \cdot
    \exp\!\big(
      -
      \tfrac{ \eta_1 }{ q }
      x^2
    \big)
  \big]
  = \infty
\Big)
\\
\Leftrightarrow
\;
&
\Big(
  \forall \, q \in (0,\infty) \colon
  \liminf_{ \R \ni x \to \infty }
  \big[
    \psi( x )
    \cdot
    \exp\!\big(
      -
      q x^2
    \big)
  \big]
  = \infty
\Big)
 ,
\end{split}
\end{equation}
which completes the proof.
\end{proof}

As a immediate consequence of \eqref{eq:cor1_eq3} in Corollary~\ref{cor1} and
Lemma~\ref{lem:speed_psi} we get a non-polynomial decay of the error
of any strong approximation of $X^\psi(T)$ based on $ n \in \N $ evaluations of the
driving Brownian motion $W$ and the path of $W$ starting from time $T/n$
if $\psi$ satisfies the exponential growth condition stated in Lemma~\ref{lem:speed_psi}.

\begin{cor}
\label{cor1b}
Assume the setting in Section~\ref{sec:setting},
let $ \beta \in (0,\infty) $ be given by $ \beta = \int_{\tau_2}^{\tau_3} \left| g(s) \right|^2 ds $,
and assume that $ \psi \in C^{ \infty }( \R, (0,\infty) ) $
is strictly increasing with the property that
$
  \psi\big( \sqrt{ 2 \beta } \big) = 1
$
and
$
  \forall \, q \in (0,\infty) \colon
  \liminf_{ x \to \infty }
  \left[
    \psi( x )
    \cdot
    \exp( - q x^2 )
  \right]
  = \infty
$.
Then
for all
$
  q \in (0,\infty)
$
we have 
\begin{equation}
  \liminf_{  n \to \infty }
  \biggl(
  n^q
  \cdot
  \inf_{
    \substack{
      u \colon
      \R^n \times C( [ T / n , T ] , \R ) \to \R
    \\
      \text{measurable, }
      s_1, \dots, s_n \in [0,T]
    }
  }
  \EE\Big[
    \bigl|
      X^{ \psi }_4(T)
      -
      u\big(
        W( s_1 ) , \dots, W( s_n ) ,
        ( W(s) )_{ s \in [ \nicefrac{ T }{ n }, T ] }
      \big)
    \bigr|
  \Big]
  \bigg)
  =
  \infty
  .
\end{equation}
\end{cor}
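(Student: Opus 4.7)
The plan is to combine Corollary~\ref{cor1} (specifically the bound \eqref{eq:cor1_eq3}) with the characterization provided by Lemma~\ref{lem:speed_psi}. There is essentially no new analytic content; all the hard work has already been done in the proofs of Theorem~\ref{t1}, Corollary~\ref{cor1}, and Lemma~\ref{lem:speed_psi}.

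First, I would observe that the assumption $\forall q \in (0,\infty)\colon \liminf_{x\to\infty}[\psi(x)\exp(-qx^2)] = \infty$, together with the strict monotonicity of $\psi$, forces $\psi(x)\to\infty$ as $x\to\infty$; in particular $\liminf_{x\to\infty}\psi(x) = \infty$, so the hypotheses of Corollary~\ref{cor1} are satisfied. Next, for every $n\in\N\cap[2T/\tau_1,\infty)$, every $s_1,\dots,s_n\in[0,T]$, and every measurable $u\colon\R^n\times C([T/n,T],\R)\to\R$, the bound \eqref{eq:cor1_eq3} gives
\begin{equation*}
  \EE\Big[\bigl|X_4^{\psi}(T) - u\big(W(s_1),\dots,W(s_n),(W(s))_{s\in[T/n,T]}\big)\bigr|\Big]
  \geq
  c \cdot \exp\Bigl(-\tfrac{2}{\beta}\cdot\bigl|\psi^{-1}\bigl(\tfrac{2^{3/2}C}{T^{3/2}}\cdot n^3\bigr)\bigr|^2\Bigr).
\end{equation*}
Since the right-hand side depends neither on $u$ nor on the nodes $s_1,\dots,s_n$, I can take the infimum over all admissible choices on the left without changing the right-hand side.

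It then remains to show that multiplying this bound by $n^q$ and letting $n\to\infty$ produces infinity for every $q\in(0,\infty)$. This is precisely the implication ``$\Leftarrow$'' in Lemma~\ref{lem:speed_psi} applied with $\eta_1 = 2/\beta$, $\eta_2 = 2^{3/2}C/T^{3/2}$, and $\eta_3 = 3$: the standing assumption on $\psi$ in the corollary is exactly the right-hand side of the equivalence in Lemma~\ref{lem:speed_psi}, so the left-hand side gives
\begin{equation*}
  \liminf_{\N\ni n\to\infty}\Bigl[n^q\cdot\exp\Bigl(-\tfrac{2}{\beta}\bigl|\psi^{-1}\bigl(\tfrac{2^{3/2}C}{T^{3/2}}n^3\bigr)\bigr|^2\Bigr)\Bigr] = \infty
\end{equation*}
for every $q\in(0,\infty)$. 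Combining this with the uniform lower bound from the previous step yields the claim.

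I expect no real obstacle: the constants $c$ and $C$ from Corollary~\ref{cor1} are fixed positive numbers, so the factor $c$ is harmless, and matching $\eta_1,\eta_2,\eta_3$ to the expression in \eqref{eq:cor1_eq3} is a direct substitution. The only minor technicality is that \eqref{eq:cor1_eq3} requires $n\geq 2T/\tau_1$, but since we work with $\liminf_{n\to\infty}$ this finite set of exceptions is irrelevant.
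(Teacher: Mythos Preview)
Your proposal is correct and follows exactly the route the paper indicates: the paper states that Corollary~\ref{cor1b} is an immediate consequence of \eqref{eq:cor1_eq3} in Corollary~\ref{cor1} together with Lemma~\ref{lem:speed_psi}, and you have spelled out precisely that argument, including the correct choice of parameters $\eta_1=2/\beta$, $\eta_2=2^{3/2}C/T^{3/2}$, $\eta_3=3$ and the observation that the restriction $n\geq 2T/\tau_1$ is harmless for the $\liminf$.
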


The following result shows that the smallest possible error 
for strong approximation of $X^\psi(T)$ based on $ n \in \N $ evaluations of the
driving Brownian motion $W$ and the path of $W$ starting from time $T/n$
may decay arbitrarily slow.

\begin{cor}
\label{cor2}
Assume the setting in Section~\ref{sec:setting},
let $ \beta \in (0,\infty) $ be given by 
$
  \beta = \int_{\tau_2}^{\tau_3} \left| g(s) \right|^2 ds,
$
and let $( a_n)_{n\in\N} \subset (0,\infty) $ satisfy
$
  \limsup_{ n \to \infty } a_n = 0
$.
Then there exist a real number $ \kappa \in (0,\infty) $
and a strictly increasing function
$
  \psi \in C^{ \infty }( \R , (0,\infty) )
$
with
$
  \liminf_{ x \to \infty } \psi( x ) = \infty
$
and
$
  \psi\big( \sqrt{ 2 \beta } \big) = 1
$
such that for all $ n \in \N $,
$ s_1, s_2, \dots, s_n \in [0,T] $
and all measurable
$
  u \colon \R^n \times C\big( [ T/n , T ],\R \big) \to \R
$
we have 
\begin{equation}
  \EE\Big[
    \big|
      X^{ \psi }_4( T ) -
      u\big(
        W( s_1 ), \dots, W( s_n ) , ( W(s) )_{ s \in [T/n,T] }
      \big)
    \big|
  \Big]
  \geq
  \kappa\cdot a_n
  .
\end{equation}
\end{cor}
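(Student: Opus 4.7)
The plan is to apply \eqref{eq:cor1_eq3} of Corollary~\ref{cor1} with a $\psi$ whose growth is tailored to the decay of $(a_n)$, and to handle the finitely many small $n$ by a monotonicity-of-information argument.

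First I would reduce to a convenient class of sequences. Since $(a_n) \subset (0,\infty)$ and $\limsup_{n\to\infty} a_n = 0$, we have $a_n \to 0$. Replacing $a_n$ by $\tilde a_n := \sup_{k \geq n} a_k + 2^{-n}$ gives a strictly decreasing sequence with $\tilde a_n \geq a_n$ and $\tilde a_n \to 0$; since $\kappa \tilde a_n \geq \kappa a_n$, it suffices to prove the statement for $(\tilde a_n)$. So I assume throughout that $(a_n)$ is strictly decreasing and $a_n \to 0$.

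Next, with $c, C$ the constants from Corollary~\ref{cor1} and $n_0 := \lceil 2T/\tau_1 \rceil$, set $r_n := 2^{3/2} C n^3 / T^{3/2}$ and pick an auxiliary $\kappa_0 \in (0, c/a_1)$ small enough that
$$b_n := \sqrt{\tfrac{\beta}{2}\log\!\bigl(\tfrac{c}{\kappa_0 a_n}\bigr)}, \qquad n \in \N,$$
satisfies $b_{n_0} > \sqrt{2\beta}+1$; this is possible since $b_n \to \infty$ as $\kappa_0 \to 0$. Then $(b_n)$ is strictly increasing to $\infty$, and by construction the algebraic identity $c \exp(-\tfrac{2}{\beta} b_n^2) = \kappa_0 a_n$ holds for every $n$. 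A standard mollification argument (smoothing a strictly increasing piecewise-linear function passing through $(\sqrt{2\beta}, 1)$ and through $(b_n, 2 r_n)$ for $n \geq n_0$, extended below $\sqrt{2\beta}$ by any smooth strictly increasing positive tail, with a mollifier whose support is small enough to preserve strict monotonicity and to ensure $\psi(b_n) \geq r_n$) yields a strictly increasing $\psi \in C^\infty(\R, (0,\infty))$ with $\psi(\sqrt{2\beta}) = 1$, $\liminf_{x \to \infty} \psi(x) = \infty$, and $\psi(b_n) \geq r_n$ for every $n \geq n_0$.

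For $n \geq n_0$, monotonicity of $\psi^{-1}$ combined with $\psi(b_n) \geq r_n$ gives $\psi^{-1}(r_n) \leq b_n$, so \eqref{eq:cor1_eq3} yields
$$\inf_{s_1,\dots,s_n, u} \EE\bigl[|X_4^\psi(T) - u(\cdots)|\bigr] \;\geq\; c \exp\!\bigl(-\tfrac{2}{\beta} |\psi^{-1}(r_n)|^2\bigr) \;\geq\; c \exp\!\bigl(-\tfrac{2}{\beta} b_n^2\bigr) \;=\; \kappa_0 a_n.$$
For $n < n_0$, every approximation based on $n$ point evaluations and the path on $[T/n, T]$ can be rewritten as one based on $n_0$ point evaluations and the path on $[T/n_0, T] \supseteq [T/n, T]$ (pad with dummy evaluations), so the infimum at $n$ is at least the infimum at $n_0$, and the latter is $\geq \kappa_0 a_{n_0}$ by the previous display. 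Setting $\kappa := \kappa_0 \, a_{n_0}/a_1$ (note $a_{n_0} \leq a_1$) then yields $\kappa a_n \leq \kappa_0 a_{n_0}$ for $n < n_0$ and $\kappa a_n \leq \kappa_0 a_n$ for $n \geq n_0$, completing the proof.

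The one delicate step is the explicit construction of $\psi$: it must be smooth, strictly increasing, strictly positive, pinned to $\psi(\sqrt{2\beta}) = 1$, and satisfy the prescribed growth $\psi(b_n) \geq r_n$ simultaneously. The mollification is routine but requires care with the support so that strict monotonicity, positivity, and the pinned value at $\sqrt{2\beta}$ are all preserved. Everything else reduces to monotonicity of $\psi^{-1}$, a comparison of information, and the built-in identity $c\exp(-\tfrac{2}{\beta}b_n^2)=\kappa_0 a_n$.
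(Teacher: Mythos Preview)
Your proposal is correct and follows essentially the same approach as the paper: reduce to strictly decreasing $(a_n)$, define $b_n$ so that $c\exp(-\tfrac{2}{\beta}b_n^2)$ equals a constant times $a_n$, construct a smooth strictly increasing $\psi$ with $\psi(\sqrt{2\beta})=1$ and $\psi(b_n)\geq 2^{3/2}Cn^3/T^{3/2}$, apply \eqref{eq:cor1_eq3} for $n\geq n_0$, and handle $n<n_0$ by monotonicity of the minimal error, finally setting $\kappa$ proportional to $a_{n_0}/a_1$. The only notable difference is that the paper writes down $\psi$ explicitly via smooth transition functions of the form $\exp\bigl(\tfrac{1}{x-b_{n-1}}-\tfrac{1}{b_n-x}\bigr)$ that interpolate between the values $\tilde{C}(n-1)^3$ and $\tilde{C}n^3$ on each interval $(b_{n-1},b_n)$, whereas you invoke a mollification argument; both constructions work, though you should make sure your piecewise-linear skeleton is actually increasing at the junction $(\sqrt{2\beta},1)\to(b_{n_0},2r_{n_0})$ (e.g.\ by also requiring $r_{n_0}>1$, which can be arranged by enlarging $n_0$).
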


\begin{proof}
Without loss of generality we may assume that the sequence
$
  ( a_n )_{ n \in \N }
$
is strictly decreasing.
Let $c, C\in (0,\infty) $ be
given by \eqref{newconst} and put  
$
  \tilde{C} = 
  2^{ 3 / 2 } C /  T^{ 3 / 2 }
$.
Choose 
$ 
  n_0 \in \N \cap [ \nicefrac{ 2 T }{ \tau_1 } , \infty ) 
$ 
such that
for all
$ n \in \{ n_0, n_0 + 1, \dots \} $
we have
\begin{equation}\label{vr}
  a_n < 1 < \tilde{C} \cdot n^{ 3 }
  \qquad
  \text{and}
  \qquad 
  \tfrac{ \beta }{ 2 }
  \ln\!\big(
    \tfrac{ 1 }{ a_n }
  \big)
  >
  2 \beta,
\end{equation}
and let 
$
  ( b_n )_{ n \in \{ n_0 - 1, n_0 , \dots \} } 
  \subset (0,\infty)
$ 
be such that
$
  b_{ n_0 - 1 } = \sqrt{ 2 \beta }
$ 
and such that for all $ n \in \{ n_0, n_0 + 1, \dots \} $
we have
\begin{equation}
  b_n =
  \Big[
    \tfrac{ \beta }{ 2 }
    \ln\!\big(
      \tfrac{ 1 }{ a_n }
    \big)
  \Big]^{ 1 / 2 }
  .
\end{equation}
Note that 
$ 
  ( b_n )_{ n \in \{ n_0 - 1, n_0 , \dots \} } 
$ 
is strictly increasing and satisfies
$
  \lim_{ n \to \infty } b_n = \infty
$.

Next let
$
  \psi \colon \R \to (0,\infty)
$
be the function with the property that
for all 
$ n \in \{ n_0, n_0 + 1, \dots \} $,
$ x \in \R $
we have
\begin{equation}
  \psi(x) =
  \begin{cases}
    1 -
    \exp\!\big(
      \frac{ 1 }{
        ( x - b_{ n_0 - 1 } )
      }
    \big)
    ,
  &
    \text{if }
    x < b_{ n_0 - 1 }
    ,
\\[1ex]
  1 ,
  &
  \text{if } x = b_{ n_0 - 1 } ,
\\[1ex]
  1 +
  \displaystyle{
    \frac{
       \tilde{C} \cdot n_0^{ 3  } - 1
    }{
        1 +
        \exp\!\big(
          \frac{ 1 }{ ( x - b_{ n_0 - 1 } ) }
          -
          \frac{ 1 }{
            ( b_{n_0} - x )
          }
        \big)
    }
  }
  ,  
  & 
  \text{if } x \in ( b_{ n_0 - 1 } , b_{n_0}),
\\[1ex]
  \tilde{C} \cdot n^{ 3  } ,
  &
  \text{if } x = b_n \text{ and } n\ge n_0,
\\[1ex]
 \tilde{C} \cdot (n-1)^{ 3  }  +
  \displaystyle{
    \frac{
       \tilde{C} \cdot n^{ 3  }  -  \tilde{C} \cdot (n-1)^{ 3 } 
    }{
              1 +
        \exp\!\big(
          \frac{ 1 }{ ( x - b_{ n - 1 } ) }
          -
          \frac{ 1 }{
            ( b_n - x )
          }
        \big)
          }
  }
  ,
  &
  \text{if }
  x \in ( b_{ n - 1 } , b_n ) \text{ and } n > n_0
  .
\end{cases}
\end{equation}
Then 
$ \psi $ is strictly increasing, positive,
and infinitely often differentiable 
and 
$ \psi $ 
satisfies
$
  \psi\big( \sqrt{ 2 \beta } \big) = 1,
$
$
  \liminf_{ x \to \infty } \psi( x ) = \infty
$,
and 
$
  \psi( \R ) = (0,\infty)
$.

In the next step let 
$ \varepsilon_n \in [0,\infty) $,
$ n \in \N $,
be the real numbers with the property that for all 
$ n \in \N $ we have
\begin{equation}
\varepsilon_n =  \inf_{ s_1, \dots, s_n \in [0,T] }\,\,
  \inf_{
    \substack{
      u \colon
      \R^n \times C( [ T / n , T ] , \R ) \to \R
    \\
      \text{measurable}
    }
  }
  \EE\Big[
    \big|
      X_4^{ \psi }(T)
      -
      u\big( W( s_1 ) , \dots, W( s_n ) , ( W(s) )_{ s \in [ \nicefrac{ T }{ n } , T ] } )
    \big|
  \Big].
\end{equation}
Estimate~\eqref{eq:cor1_eq3} in Corollary \ref{cor1} yields
that for all $ n \in \{ n_0, n_0 + 1, \dots \} $ we have
\begin{equation}
\label{eq:applicaton_cor1}
  \varepsilon_n  \geq
c
  \cdot
  \exp\!\left(
    - \tfrac{ 2 }{ \beta }
    \cdot
    \big|
      \psi^{ - 1 }\big(
        \tilde{C} \cdot n^{ 3 }
      \big)
    \big|^2
  \right) 
  = c  \cdot\exp\!\left( - \tfrac{ 2 }{ \beta } \cdot \left| b_n \right|^2 \right)=  c  \cdot a_n
  .
\end{equation}
Since the sequence $(\varepsilon_n)_{n\in\N}$ is non-increasing, we have 
for every $ n \in \{ 1, 2, \dots, n_0 \} $ that
$ \varepsilon_n \ge \varepsilon_{n_0} \ge c \cdot a_{ n_0 } $.
We therefore conclude that for all $n\in\N$ we have
\begin{equation}\label{ds3}
\varepsilon_n \ge c  \cdot \min\{1,a_{n_0}/a_n\}\cdot  a_n \ge \frac{c \,a_{n_0}}{a_1}\cdot a_n, 
\end{equation}
which completes the proof of the corollary with $\kappa= c\cdot a_{n_0}/a_1$.
\end{proof}

Next we extend the result in Corollary \ref{cor2} to approximations that may use
finitely many evaluations of the Brownian path as well as the whole Brownian path 
starting from some arbitrarily small positive time.

\begin{cor}
\label{cor4}
Assume the setting in Section~\ref{sec:setting},
let $ \beta \in (0,\infty) $ be given by 
$ 
  \beta = \int_{\tau_2}^{\tau_3} \left| g(s) \right|^2 ds
$, 
and let $( a_n)_{n\in\N} \subset (0,\infty) $
and 
$ ( \delta_n)_{n\in\N} \subset (0,\infty) $ satisfy
$
\lim_{ n \to \infty } a_n =  \lim_{ n \to \infty } \delta_n = 0
$.
Then there exist a real number $ \kappa \in (0,\infty) $
and a strictly increasing function
$
  \psi \in C^{ \infty }( \R , (0,\infty) )
$
with
$
  \liminf_{ x \to \infty } \psi( x ) = \infty
$
and
$
  \psi\big( \sqrt{ 2 \beta } \big) = 1
$
such that for all $ n \in \N $,
$ s_1, s_2, \dots, s_n \in [0,T] $
and all measurable
$
  u \colon \R^n \times C\big( [ \delta_n , T ],\R \big) \to \R
$
we have 
\begin{equation}
  \EE\Big[
    \big|
      X^{ \psi }_4( T ) -
      u\big(
        W( s_1 ), \dots, W( s_n ) , ( W(s) )_{ s \in [\delta_n,T] }
      \big)
    \big|
  \Big]
  \geq
  \kappa\cdot a_n
  .
\end{equation}
\end{cor}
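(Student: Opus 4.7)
The plan is to deduce Corollary~\ref{cor4} from Corollary~\ref{cor2} by a padding reduction, with some care in choosing the auxiliary sequence to which Corollary~\ref{cor2} is applied. Fix the sequences $(a_n)$ and $(\delta_n)$. First, replacing $a_n$ by a strictly decreasing majorant tending to $0$ (e.g., $\max_{k\ge n} a_k + 1/n$) only strengthens the bound we wish to prove, so I may and shall assume without loss of generality that $(a_n)_{n\in\N}$ is strictly decreasing.

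For each $n\in\N$ set
\begin{equation*}
  m_n \;=\; \max\!\big(n, \lceil T/\delta_n\rceil\big) \,\in\,\N,
\end{equation*}
so that $m_n\ge n$, $T/m_n\le \delta_n$, and $m_n\to\infty$ (because $\delta_n\to 0$). The key observation is that any approximation of the Corollary~\ref{cor4} type at index $n$ can be rewritten as an approximation of the Corollary~\ref{cor1} type (specifically, of the form appearing in \eqref{eq:cor1_eq3}) at index $m_n$: given $s_1,\ldots,s_n\in[0,T]$ and measurable $u\colon\R^n\times C([\delta_n,T],\R)\to\R$, pick arbitrary dummy times $s_{n+1},\ldots,s_{m_n}\in[0,T]$ and define the measurable map $\tilde u(x_1,\ldots,x_{m_n},w)=u(x_1,\ldots,x_n,w|_{[\delta_n,T]})$ on $\R^{m_n}\times C([T/m_n,T],\R)$, which reproduces the original approximation because $[\delta_n,T]\subseteq[T/m_n,T]$. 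Thus the Corollary~\ref{cor4} error at index $n$ is bounded below by the infimum in the Corollary~\ref{cor2} setup at index $m_n$.

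The delicate point is that $m_n$ can be much larger than $n$, so applying Corollary~\ref{cor2} with the sequence $(a_n)$ itself would only yield $\kappa a_{m_n}$, which is weaker than the desired $\kappa a_n$. To repair this, define $n^\ast\colon\N\to\N$ by $n^\ast(m)=\min\{n\in\N:m_n\ge m\}$ (well-defined since $m_n\to\infty$) and set $\tilde a_m=a_{n^\ast(m)}$. A short argument shows $n^\ast(m)\to\infty$: for any $N\in\N$, taking $m>\max_{k\le N}m_k$ forces $m_k<m$ for all $k\le N$ and hence $n^\ast(m)>N$. Consequently $\tilde a_m\to 0$, and Corollary~\ref{cor2} applied to $(\tilde a_m)_{m\in\N}$ yields some $\kappa\in(0,\infty)$ and a strictly increasing $\psi\in C^\infty(\R,(0,\infty))$ with $\liminf_{x\to\infty}\psi(x)=\infty$ and $\psi(\sqrt{2\beta})=1$ such that the Corollary~\ref{cor2}-type error at index $m$ is at least $\kappa\tilde a_m$.

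Combining the two steps for any fixed $n\in\N$: since $n$ itself satisfies $m_n\ge m_n$, we have $n^\ast(m_n)\le n$, and the strict monotonicity of $(a_n)$ gives $\tilde a_{m_n}=a_{n^\ast(m_n)}\ge a_n$. Chaining the padding reduction with the Corollary~\ref{cor2} bound then produces the claimed lower bound $\kappa\cdot a_n$ with a single $\psi$ that works uniformly in $n$. The main obstacle is precisely this index mismatch between $n$ (the evaluation count in Corollary~\ref{cor4}) and $m_n$ (the larger evaluation count forced by the path-information reduction to Corollary~\ref{cor2}); the definition of $\tilde a_m$ via $n^\ast$, rather than the naive $\tilde a_m=a_m$, together with the WLOG monotonicity of $(a_n)$, is what resolves it.
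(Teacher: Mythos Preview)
Your proposal is correct and follows essentially the same approach as the paper: reduce to Corollary~\ref{cor2} via a padding argument, constructing an auxiliary sequence $(\tilde a_m)$ that absorbs the index mismatch between $n$ and the larger index forced by the path information. The only cosmetic differences are that the paper takes $(\delta_n)$ strictly decreasing WLOG and sets $k_n=\lceil T/\delta_n\rceil+n$ (strictly increasing, so $\tilde a_{k_n}:=a_n$ is well-defined, with the remaining $\tilde a_m$ filled in to tend to $0$), whereas you take $(a_n)$ strictly decreasing WLOG, use $m_n=\max(n,\lceil T/\delta_n\rceil)$, and define $\tilde a_m$ via the inverse-like map $n^\ast$; both constructions serve exactly the same purpose.
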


\begin{proof}
Without loss of generality we may assume that the sequence
$
  (\delta_n )_{ n \in \N }
$
is strictly decreasing. 
Let 
$
  ( k_n )_{n\in\N}
  \subset (0,\infty)
$ 
be the strictly increasing sequence of
positive integers 
with the property that for all $ n \in \N $
we have
\begin{equation}\label{nr1}
  k_n = 
  \lceil \nicefrac{ T }{ \delta_n } \rceil + n.
\end{equation}
Moreover, 
let
$
  (\tilde a_n)_{n\in\N}\subset (0,\infty)
$
be a sequence such that 
for all $ n \in \N $ we have
\begin{equation}\label{nr2}
  \tilde a_{k_n} = a_n
\end{equation}
and 
$
  \lim_{ m \to \infty } \tilde a_m = 0
$.
Then Corollary \ref{cor2} implies that there exist 
a real number $ \kappa \in (0,\infty) $
and a strictly increasing function
$
  \psi \in C^{ \infty }( \R , (0,\infty) )
$
with
$
  \liminf_{ x \to \infty } \psi( x ) = \infty
$
and
$
  \psi\big( \sqrt{ 2 \beta } \big) = 1
$
such that for all $ n \in \N $,
$ s_1, s_2, \dots, s_n \in [0,T] $
and all measurable
$
  \tilde u \colon \R^n \times C\big( [ \nicefrac{ T }{ n } , T ],\R \big) \to \R
$
we have 
\begin{equation}\label{nr3}
  \EE\Big[
    \big|
      X^{ \psi }_4( T ) -
      \tilde u\big(
        W( s_1 ), \dots, W( s_n ) , ( W(s) )_{ s \in [T/n,T] }
      \big)
    \big|
  \Big]
  \geq
  \kappa\cdot \tilde a_n
  .
\end{equation}

Let $ n \in \N $, let
$
  u \colon \R^n \times C\big( [ \delta_n , T ],\R \big) \to \R
$
be a measurable mapping,
and let $ s_1, s_2, \dots, s_n \in [0,T] $. Note that \eqref{nr1} implies 
$ \delta_n \ge \nicefrac{ T }{ k_n } $ and $ k_n \ge n $. 
Put $ s_m = s_n $ for $ m \in \{ n+1, n+2, \dots, k_n \} $. Clearly, there exists a measurable mapping
$
  \tilde u \colon \R^{k_n} \times C\big( [ T/k_n , T ],\R \big) \to \R
$
such that 
$ 
  u\big(
        W( s_1 ), \dots, W( s_n ) , ( W(s) )_{ s \in [\delta_n,T] }
      \big) 
  = \tilde u\big(
        W( s_1 ), \dots, W( s_{k_n} ) , ( W(s) )_{ s \in [T/k_n,T] } \big) 
$. 
Hence, by \eqref{nr3} and by \eqref{nr2}, we have
\begin{equation}
  \EE\Big[
    \big|
      X^{ \psi }_4( T ) -
      u\big(
        W( s_1 ), \dots, W( s_n ) , ( W(s) )_{ s \in [\delta_n,T] }
      \big)
    \big|
  \Big]
  \geq
  \kappa\cdot \tilde a_{k_n} = \kappa\cdot a_n,
\end{equation}
which completes the proof.
\end{proof}

\section{Upper error bounds for 
    the Euler-Maruyama scheme
}
\label{sec:upper_bounds}

A classical 
method for strong approximation of SDEs 
is provided by the Euler-Maruyama scheme.
In Theorem~\ref{t2} below we establish upper bounds for the
root mean square errors of Euler-Maruyama approximations of
 $ X^{ \psi }(T) $ for 
the processes $ X^{ \psi } $, $ \psi \in C^{ \infty }( \R, (0,\infty) ) $, from Section~\ref{sec:setting}.
In particular, it turns out that in the case of non-polynomial convergence
the Euler-Maruyama approximation may still
perform asymptotically optimal, at least on a logarithmic scale, see
Example \ref{ex34} below for details.

We first provide some elementary bounds for tail probabilities
of normally distributed random variables.

\begin{lemma}
\label{lem:gaussian}
Let $ ( \Omega, \mathcal{A}, \PP ) $ be a probability space,
let $ x \in \R $,
and let $ Z \colon \Omega \to \R $ be a standard normal random variable.
Then 
\begin{equation}
\label{eq:PZ_0}
  \PP\big(
    Z \geq x
  \big)
  \leq
  \tfrac{ 1 }{ \sqrt{ 2 } }
  \cdot
  \exp \bigl(-\tfrac{ x | x | }{ 2 } \bigr)
  .
\end{equation}
\end{lemma}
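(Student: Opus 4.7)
The plan is to split on the sign of $x$, since $\tfrac{x|x|}{2} = \tfrac{x^2}{2}$ when $x \geq 0$ and $\tfrac{x|x|}{2} = -\tfrac{x^2}{2}$ when $x < 0$. The target bound therefore reads $\tfrac{1}{\sqrt{2}} e^{-x^2/2}$ in the first case and $\tfrac{1}{\sqrt{2}} e^{x^2/2}$ in the second, which conceptually correspond to a Gaussian tail bound in one direction and a crude estimate on $\Phi(-x)$ in the other.

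For $x \geq 0$ I would substitute $z = x + u$ in $\PP(Z \geq x) = \tfrac{1}{\sqrt{2\pi}} \int_x^\infty e^{-z^2/2}\,dz$ to obtain
\begin{equation*}
  \PP(Z \geq x) = \tfrac{e^{-x^2/2}}{\sqrt{2\pi}} \int_0^\infty e^{-xu - u^2/2}\,du,
\end{equation*}
then bound $e^{-xu} \leq 1$ (since $x, u \geq 0$) and use $\int_0^\infty e^{-u^2/2}\,du = \sqrt{\pi/2}$ to get the sharper bound $\PP(Z \geq x) \leq \tfrac{1}{2} e^{-x^2/2}$, which is a fortiori $\leq \tfrac{1}{\sqrt{2}} e^{-x^2/2}$. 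This is the standard Gaussian tail estimate and requires no new ideas.

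For $x < 0$ I would start from $\PP(Z \geq x) = \tfrac{1}{2} + \int_x^0 \phi(z)\,dz$, with $\phi$ denoting the standard normal density, and bound the integrand by its maximum $\phi(0) = \tfrac{1}{\sqrt{2\pi}}$ to get $\PP(Z \geq x) \leq \tfrac{1}{2} + \tfrac{|x|}{\sqrt{2\pi}}$. It then suffices to verify the deterministic inequality $\tfrac{1}{2} + \tfrac{y}{\sqrt{2\pi}} \leq \tfrac{1}{\sqrt{2}} e^{y^2/2}$ for every $y \geq 0$. Using the elementary bound $e^{y^2/2} \geq 1 + \tfrac{y^2}{2}$, the claim reduces to the polynomial inequality $y^2 - \tfrac{2y}{\sqrt{\pi}} + (2 - \sqrt{2}) \geq 0$, which holds for all $y \in \R$ because its discriminant $\tfrac{4}{\pi} - 4(2 - \sqrt{2})$ is negative (this amounts to the elementary numerical comparison $\tfrac{1}{\pi} + \sqrt{2} < 2$).

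The case $x \geq 0$ is entirely routine; the only step requiring any care is the discriminant verification in the case $x < 0$, but it collapses to a short arithmetic comparison involving $\tfrac{1}{\pi}$, $\sqrt{2}$, and $2$. Combining the two cases yields the estimate $\PP(Z \geq x) \leq \tfrac{1}{\sqrt{2}} \exp(-\tfrac{x|x|}{2})$ uniformly in $x \in \R$.
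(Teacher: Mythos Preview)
Your proof is correct, but the paper takes a different, more unified route. Instead of splitting on the sign of $x$, the paper substitutes $z=y+x$ and proves the single pointwise inequality
\[
(y+x)^2 - x|x| - \tfrac{y^2}{2} = \tfrac{1}{2}\bigl(y^2 + 4xy + 4x^2\,\mathbbm{1}_{(-\infty,0]}(x)\bigr) \geq 0
\]
for all $y\geq 0$ and all $x\in\R$ (for $x\geq 0$ this is $\tfrac{1}{2}(y^2+4xy)\geq 0$, for $x<0$ it is $\tfrac{1}{2}(y+2x)^2\geq 0$). This immediately gives $e^{-(y+x)^2/2}\leq e^{-x|x|/2}e^{-y^2/4}$, and integrating over $y\in[0,\infty)$ yields the factor $\tfrac{1}{\sqrt{2}}$ in one stroke. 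The paper's argument thus avoids any case distinction in the analysis and any numerical discriminant check; the constant $\tfrac{1}{\sqrt{2}}$ emerges naturally from $\int_0^\infty \tfrac{1}{\sqrt{2\pi}}e^{-y^2/4}\,dy$. Your approach, by contrast, is more hands-on: it recovers the sharper constant $\tfrac{1}{2}$ for $x\geq 0$ (which the paper does not), but pays for this with a separate and somewhat ad hoc treatment of $x<0$ culminating in the arithmetic verification $\tfrac{1}{\pi}+\sqrt{2}<2$.
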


\begin{proof}
For every $y\in [0,\infty)$ we have
\begin{equation}
\begin{aligned}
(y+x)^2 - x|x| - \tfrac{y^2}{2} & = \tfrac{1}{2}(y^2 + 4xy + 2x(x-|x|)) =
 \tfrac{1}{2}(y^2 + 4xy + 4x^2\1_{(-\infty,0]}(x)) \ge 0. 
\end{aligned}
\end{equation}
Hence
\begin{equation}
\begin{aligned}
\PP(Z\ge x) & = \int_0^\infty   \tfrac{ 1 }{ \sqrt{ 2 \pi } }
  \cdot
  \exp\bigl(-\tfrac{ (y+x)^2 }{ 2 }\bigr)\, dy \\
   & \le  
  \exp \bigl(-\tfrac{ x | x | }{ 2 } \bigr) \int_0^\infty   \tfrac{ 1 }{ \sqrt{ 2 \pi } }
  \cdot\exp\bigl(-\tfrac{ y^2 }{ 4 }\bigr)\, dy =  \tfrac{ 1 }{ \sqrt{ 2 } }
  \cdot
  \exp \bigl(-\tfrac{ x | x | }{ 2 } \bigr),
\end{aligned}
\end{equation}
which completes the proof.
\end{proof}

\begin{lemma}
\label{lem:gaussian2}
Let $ ( \Omega, \mathcal{A}, \PP ) $ be a probability space,
let $ \sigma \in [0,\infty) $, $ c \in (0,\infty) \cap [ \sigma, \infty ) $,
and let $ Z \colon \Omega \to \R $ be a $ \mathcal{N}( 0, \sigma^2 ) $-distributed random variable.
Then for all $ x \in \R $ we have
\begin{equation}
\label{eq:PZ_0_b}
  \PP\big(
    Z \geq x
  \big)
  \leq
  \exp\!\big(
    - \tfrac{ [ \max\{ x, 0 \} ]^2 }{ 2 c^2 }
  \big)
  .
\end{equation}
\end{lemma}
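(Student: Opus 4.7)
The plan is to reduce to Lemma \ref{lem:gaussian} after handling the easy boundary cases.

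First, I would dispose of the case $x \leq 0$. Here $\max\{x,0\} = 0$, so the right-hand side of \eqref{eq:PZ_0_b} equals $\exp(0)=1 \geq \PP(Z \geq x)$ and there is nothing to prove. So assume $x > 0$.

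Next, I would dispose of the degenerate case $\sigma = 0$. In this case $Z = 0$ $\PP$-a.s., so $\PP(Z \geq x) = 0$ for $x > 0$ and the estimate is immediate. So further assume $\sigma > 0$, which together with the assumption $c \in (0,\infty) \cap [\sigma,\infty)$ gives $0 < \sigma \leq c$.

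Now $Z/\sigma$ is a standard normal random variable, so I would apply Lemma \ref{lem:gaussian} (with $x$ replaced by $x/\sigma$) to obtain
\begin{equation}
  \PP( Z \geq x )
  = \PP\bigl( Z/\sigma \geq x/\sigma \bigr)
  \leq
  \tfrac{1}{\sqrt{2}} \exp\bigl( -\tfrac{ (x/\sigma) |x/\sigma| }{ 2 } \bigr)
  = \tfrac{1}{\sqrt{2}} \exp\bigl( -\tfrac{ x^2 }{ 2 \sigma^2 } \bigr),
\end{equation}
where in the last equality I used $x > 0$. Finally, I would use $0 < \sigma \leq c$ to get $\tfrac{1}{\sigma^2} \geq \tfrac{1}{c^2}$, and combine with $\tfrac{1}{\sqrt{2}} \leq 1$ to obtain
\begin{equation}
  \PP( Z \geq x )
  \leq \exp\bigl( -\tfrac{ x^2 }{ 2 c^2 } \bigr)
  = \exp\bigl( -\tfrac{ [\max\{x,0\}]^2 }{ 2 c^2 } \bigr),
\end{equation}
which finishes the proof. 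No step here poses a genuine obstacle; the only bookkeeping is to correctly handle $\sigma = 0$ (which is permitted by the hypothesis $\sigma \in [0,\infty)$ but incompatible with dividing by $\sigma$) and to observe that the factor $1/\sqrt{2}$ from Lemma \ref{lem:gaussian} is harmlessly absorbed into $1$.
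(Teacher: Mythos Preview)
Your proof is correct and follows essentially the same approach as the paper: handle the trivial case $x\le 0$ and the degenerate case $\sigma=0$ separately, then for $x>0$, $\sigma>0$ apply Lemma~\ref{lem:gaussian} to $Z/\sigma$ and use $\sigma\le c$ together with $1/\sqrt{2}\le 1$. The only cosmetic difference is the order in which the cases are treated.
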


\begin{proof}
In the case $ \sigma = 0 $ we note that for all $ x \in \R $ we have
\begin{equation}
  \PP\big(
    Z \geq x
  \big)
=
  \mathbbm{1}_{ ( - \infty , 0 ] }( x )
\leq
  \exp\big(
    - \tfrac{ [ \max\{ x, 0 \} ]^2 }{ 2 c^2 }
  \big)
  .
\end{equation}
In the case $ \sigma > 0 $ we use 
Lemma~\ref{lem:gaussian} to obtain 
that for all $ x \in [0,\infty) $ we have
\begin{equation}
  \PP\big(
    Z \geq x
  \big)
  =
  \PP\big(
    \tfrac{ Z }{ \sigma } \geq \tfrac{ x }{ \sigma }
  \big)
  \leq
  \tfrac{ 1 }{ \sqrt{ 2 } }
  \cdot
  \exp\!\big(
    - \tfrac{ x^2}{ 2 \sigma^2 }
  \big)
   \leq
  \exp\!\big(
    - \tfrac{ x^2  }{ 2 c^2 }
  \big)
  ,
\end{equation}
which completes the proof.
\end{proof}

Next we relate exponential growth of a continuously differentiable function to exponential growth of its derivative.

\begin{lemma}
\label{lem:existence}
Let
   $\psi\in C^1(\R,\R)$
satisfy 
$
  \forall \, q \in (0,\infty) 
  \colon
  \liminf_{ x \mapsto \infty }
  \left[
    \psi( x )
    \cdot
    \exp( - q x^2 )
  \right]
  = \infty
$
and assume that
$
  \psi'
$
is non-decreasing.
Then  
$
\forall \, q \in \R 
  \colon
  \liminf_{ x \mapsto \infty }
  \left[
    \psi'( x )
    \cdot
    \exp( - q x^2 )
  \right]
  = \infty
$.
\end{lemma}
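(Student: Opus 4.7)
My plan is to argue by contradiction using a doubling trick on the exponent: the $\forall\,q\in(0,\infty)$ quantifier in the hypothesis on $\psi$ is the key lever, and coupled with monotonicity of $\psi'$ it will force the superexponential growth of $\psi$ to transfer to $\psi'$.

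First, I would dispose of the case $q\leq 0$ by showing that $\psi'(x)\to\infty$. Since $\psi'$ is non-decreasing, the limit $L:=\lim_{x\to\infty}\psi'(x)\in(-\infty,+\infty]$ exists. If $L$ were finite, then $\psi(x)\leq\psi(0)+L\cdot\max\{x,0\}$ for all $x\geq 0$, so $\psi(x)\exp(-x^2)\to 0$, contradicting the hypothesis applied with $q=1$. Hence $\psi'(x)\to\infty$, and for any $q\leq 0$ we have $\psi'(x)\exp(-qx^2)\geq\psi'(x)\to\infty$, as required.

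For the main case $q>0$, suppose for contradiction that $\liminf_{x\to\infty}\psi'(x)\exp(-qx^2)<\infty$. Then there exist $M\in(0,\infty)$ and a sequence $x_n\to\infty$ such that $\psi'(x_n)\leq M\exp(qx_n^2)$. Monotonicity of $\psi'$ gives $\psi'(t)\leq M\exp(qx_n^2)$ for every $t\in[0,x_n]$, and integrating this bound yields
\begin{equation*}
\psi(x_n)\leq\psi(0)+Mx_n\exp(qx_n^2).
\end{equation*}
Multiplying by $\exp(-2qx_n^2)$ we obtain
\begin{equation*}
\psi(x_n)\exp(-2qx_n^2)\leq\psi(0)\exp(-2qx_n^2)+Mx_n\exp(-qx_n^2)\xrightarrow[n\to\infty]{}0,
\end{equation*}
which contradicts the hypothesis applied with exponent $q'=2q$. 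Since this rules out every such $M$, we conclude that $\liminf_{x\to\infty}\psi'(x)\exp(-qx^2)=\infty$ for every $q>0$, completing the proof.

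The only non-routine ingredient in this plan is the choice to compare with the doubled exponent $2q$ (any $q'>q$ would do): this creates the gap needed for the linear factor $x_n$ produced by integration to be killed by $\exp(-(q'-q)x_n^2)$. Everything else -- monotone limits, integration, and the reduction of negative $q$ -- is bookkeeping. I do not anticipate a significant obstacle.
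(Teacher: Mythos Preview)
Your proof is correct and uses essentially the same idea as the paper: monotonicity of $\psi'$ together with the fundamental theorem of calculus yields the comparison $\psi'(x)\geq(\psi(x)-\psi(0))/x$ (which you use in contrapositive form), and the passage from exponent $q$ to $2q$ absorbs the resulting linear factor. The paper presents this as a short direct argument valid for all $q\in\R$ at once (after first observing that the hypothesis trivially extends to $q\le 0$), whereas you argue by contradiction and treat $q\le 0$ separately, but the mathematical content is the same.
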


\begin{proof}
Since 
$
  \forall \, q \in (0,\infty)
  \colon
  \liminf_{  x \to \infty }
  \big[
    \psi( x ) \cdot
    \exp( - q x^2 )
  \big]
  = \infty
$,
we have
\begin{equation}
\label{eq:psi_limit}
  \forall \, q \in \R
  \colon
  \liminf_{  x \to \infty }
  \left[
    \psi( x ) \cdot
    \exp( - q x^2 )
  \right]
  = \infty
  .
\end{equation}
By 
the fundamental theorem of calculus and
the assumption that $ \psi' $ is increasing we obtain 
for all $ x \in (0,\infty) $  that
\begin{equation}
  \psi'(x)
=
  \frac{ 1 }{ x }
  \int_0^x \psi'(x) \, dy
\ge
  \frac{ 1 }{ x }
  \int_0^x \psi'(y) \, dy
=
  \frac{ \psi(x) - \psi(0) }{ x }
  .
\end{equation}
Hence, 
for all $ q \in \R $ we have
\begin{equation}
\begin{split}
&
  \liminf_{  x \to \infty }
  \big[
    \psi'(x) \cdot \exp( - q x^2 )
  \big]
\ge
  \liminf_{  x \to \infty }
  \left[
    \frac{ \psi(x) - \psi(0) }{ x \cdot \exp( q x^2 ) }
  \right]
\ge
  \liminf_{  x \to \infty }
  \left[
    \frac{ \psi(x) - \frac{ 1 }{ 2 } \psi(x) }{ x \cdot \exp( q x^2 ) }
  \right]
\\ &
\qquad \qquad =
  \liminf_{  x \to \infty }
  \left[
    \frac{ \psi(x) }{ 2 x \cdot \exp( q x^2 ) }
  \right]
\ge
  \liminf_{  x \to \infty }
  \left[
    \psi(x) \cdot \exp( - 2 q x^2 )
  \right]
=
  \infty
  ,
\end{split}
\end{equation}
which completes the proof.
\end{proof}

We turn to the analysis of the Euler-Maruyama scheme for strong approximation of 
SDEs in the setting of 
Section~\ref{sec:setting}.

\begin{theorem}
\label{t2}
Assume the setting in Section~\ref{sec:setting},
assume that $ \tau_1 < \tau_2 $, 
let $ \beta \in (0,\infty) $ be given by
$ 
  \beta = \int_{\tau_2}^{\tau_3} \left| g(s) \right|^2 ds
$,
let $ \delta \in (0,1) $, 
let $ \psi \in C^\infty(\R,(0,\infty)) $ be strictly increasing 
such that 
$
  \psi\big( \sqrt{ 2 \beta } \big) = 1
$,
such that 
$
 \forall \, q \in (0,\infty) \colon
 \liminf_{ x \mapsto \infty }
  \left[
    \psi( x )
    \cdot
    \exp( - q x^2 )
  \right]
  = \infty
$,
and such that
$ \psi' $ is strictly inreasing,
and let 
$ 
  \widehat{X}^{ ( \psi, n ) } 
  \colon \{ 0, 1, \dots, n \} \times \Omega \to \R^4 
$,
$ n \in \N $,
satisfy 
for all 
$ n \in \N $, $ k \in \{ 0, 1, \dots, n - 1 \} $
that
$\widehat X^{(\psi,n)}_0 = 0$ and 
\begin{equation}\label{Euler}
\widehat{X}^{ (\psi,n) }_{ k + 1 } =
  \widehat{X}^{ (\psi,n) }_k
  +
  \mu^{ \psi }( \widehat{X}^{ (\psi,n) }_k ) \, \tfrac{ T }{ n }
  +
  \sigma( \widehat{X}^{ (\psi,n) }_k ) \,
  \big(
    W( \tfrac{ ( k + 1 ) T }{ n } )
    -
    W( \tfrac{ k T }{ n } )
  \big).
\end{equation}
Then there exist real numbers 
$ c \in (0,\infty) $ and
$ n_0 \in \N $ such that 
$ 
  \big[ | n_0 |^\delta , \infty \big) 
  \subset \psi'(\R) 
$ 
and 
such that 
for every $ n \in \{ n_0, n_0 + 1, \dots \} $ 
we have 
\begin{equation}
\label{eq:t2}
  \Big(
    \EE\Big[
      \big\|
        X^{ \psi }( T ) -
        \widehat{X}_n^{ (\psi,n) }
      \big\|_{ \R^4 }^2
    \Big]
  \Big)^{ 1 / 2 }
\leq
  c
  \,
  \bigg[
    \exp\!\left(
      - \tfrac{ 1 }{ c } \cdot \left| \psi^{ - 1 }( n^{ \delta } ) \right|^2
    \right)
    +
    \exp\!\left(
      - \tfrac{ 1 }{ c } \cdot
      \left| ( \psi' )^{ - 1 }( n^{ \delta } ) \right|^2
    \right)
  \bigg]
  .
\end{equation}
\end{theorem}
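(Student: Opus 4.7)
The plan is to exploit the product structure of $X^\psi$ from \eqref{sol}. Since $X_1^\psi(t)=t$ is reproduced exactly by Euler and the supports of $f$ and $g$ lie strictly below $\tau_3$, for every $k$ with $kT/n \ge \tau_3$ the iterates $\widehat{X}^{(\psi,n)}_{2,k}$ and $\widehat{X}^{(\psi,n)}_{3,k}$ have already frozen at Gaussian random variables $\widehat{X}_2$ and $\widehat{X}_3$. A direct computation then identifies
\begin{equation*}
\widehat{X}^{(\psi,n)}_{4,n} \;=\; \cos\!\bigl(\widehat{X}_2\,\psi(\widehat{X}_3)\bigr)\cdot \sum_{k:\,kT/n\ge\tau_3} h(kT/n)\,\tfrac{T}{n},
\end{equation*}
so that, modulo a deterministic Riemann sum error of order $1/n$, bounding $X_4^\psi(T)-\widehat{X}^{(\psi,n)}_{4,n}$ reduces to controlling $\gamma\bigl[\cos(X_2^\psi(\tau_1)\psi(X_3^\psi(\tau_3)))-\cos(\widehat{X}_2\,\psi(\widehat{X}_3))\bigr]$ in $L^2(\PP)$. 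An Itô-isometry calculation shows that $\Delta_2:=X_2^\psi(\tau_1)-\widehat{X}_2$ and $\Delta_3:=X_3^\psi(\tau_3)-\widehat{X}_3$ are centred Gaussian random variables with variance $O(n^{-2})$, $\widehat{X}_2$ and $\widehat{X}_3$ are centred Gaussian with variances bounded by $\alpha_1$ and by $\beta$ plus $O(n^{-2})$ respectively, and each has $L^p$-norms bounded uniformly in $n$ for every $p\in[1,\infty)$.

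The core step is the elementary decomposition, using $|\cos a-\cos b|\le|a-b|$ and the mean value theorem on $\psi$ combined with monotonicity of $\psi'$:
\begin{equation*}
\bigl|X_2^\psi(\tau_1)\psi(X_3^\psi(\tau_3))-\widehat{X}_2\psi(\widehat{X}_3)\bigr|\le|\Delta_2|\,\psi(X_3^\psi(\tau_3))+|\widehat{X}_2|\,\psi'\!\bigl(\max\{|X_3^\psi(\tau_3)|,|\widehat{X}_3|\}\bigr)|\Delta_3|.
\end{equation*}
I would bound each summand separately in $L^2$ by introducing the thresholds $K_n=\psi^{-1}(n^\delta)$ and $L_n=(\psi')^{-1}(n^\delta)$; the existence of the latter for all $n\ge n_0$ is exactly the condition $[n_0^\delta,\infty)\subset\psi'(\R)$, which follows from the continuity of $\psi'$ together with $\liminf_{x\to\infty}\psi'(x)=\infty$, and this last fact is supplied by Lemma~\ref{lem:existence}. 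On the good events $\{|X_3^\psi(\tau_3)|\le K_n\}$ and $\{\max(|X_3^\psi(\tau_3)|,|\widehat{X}_3|)\le L_n\}$, the Cauchy--Schwarz inequality gives $L^2$-bounds of order $\psi(K_n)\|\Delta_2\|_{L^2}=O(n^{\delta-1})$ and $\psi'(L_n)\|\widehat{X}_2\|_{L^4}\|\Delta_3\|_{L^4}=O(n^{\delta-1})$ respectively. On the complementary events, using $|\cos a-\cos b|\le2$ together with Lemma~\ref{lem:gaussian2} applied to the centred Gaussian random variables $X_3^\psi(\tau_3)$ and $\widehat{X}_3$ yields tail contributions of size $\exp(-|\psi^{-1}(n^\delta)|^2/c)$ and $\exp(-|(\psi')^{-1}(n^\delta)|^2/c)$ respectively.

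Finally, I would absorb the polynomial $O(n^{\delta-1})$ remainders into the two exponential terms to obtain the clean bound \eqref{eq:t2}. This absorption is possible precisely because of the growth hypothesis $\liminf_{x\to\infty}[\psi(x)\exp(-qx^2)]=\infty$ for all $q>0$: it forces $|\psi^{-1}(n^\delta)|^2=o(\ln n)$, so that $\exp(-|\psi^{-1}(n^\delta)|^2/c)$ decays more slowly than any polynomial once $c$ is taken large enough, and Lemma~\ref{lem:existence} transfers this property to $\psi'$. The main obstacle is bookkeeping in the truncation step: one must verify that the variance of $\widehat{X}_3$ stays within a constant factor of $\beta$ uniformly in $n$ (so that the tail constant produced by Lemma~\ref{lem:gaussian2} does not deteriorate with $n$), and that the $L^4$-norm of $\widehat{X}_2$ and the ratio $\|\Delta_3\|_{L^4}/\|\Delta_3\|_{L^2}$ are bounded uniformly in $n$, which follow from Gaussianity together with the variance estimates above.
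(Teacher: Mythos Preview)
Your proposal is correct and follows essentially the same route as the paper: explicit Euler iterates, componentwise $L^2$ analysis, truncation at the thresholds $\psi^{-1}(n^\delta)$ and $(\psi')^{-1}(n^\delta)$, Gaussian tail bounds via Lemma~\ref{lem:gaussian2}, and absorption of the polynomial $O(n^{\delta-1})$ remainders into the exponential terms via the super-Gaussian growth of $\psi$ (the paper does this through Lemma~\ref{lem:speed_psi}).

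Two cosmetic differences are worth noting. First, the paper telescopes on the cosine itself,
\[
\cos\bigl(X_2\psi(X_3)\bigr)-\cos\bigl(\widehat X_2\psi(\widehat X_3)\bigr)
=\bigl[\cos\bigl(X_2\psi(X_3)\bigr)-\cos\bigl(\widehat X_2\psi(X_3)\bigr)\bigr]
+\bigl[\cos\bigl(\widehat X_2\psi(X_3)\bigr)-\cos\bigl(\widehat X_2\psi(\widehat X_3)\bigr)\bigr],
\]
rather than on the argument $a-b$; since each telescoped piece is itself a cosine difference, each is bounded by $2$ and can be split into its own good/bad event independently. This is exactly the spot where your phrasing ``bound each summand separately \dots\ on the complementary events use $|\cos a-\cos b|\le2$'' needs care: the individual summands $|\Delta_2|\psi(X_3^\psi)$ and $|\widehat X_2|\psi'(\max)|\Delta_3|$ are not bounded on the bad events, so the good/bad split must be performed on $|\cos a-\cos b|$ before decomposing, with the single good event $\{X_3^\psi\le K_n\}\cap\{\max(X_3^\psi,\widehat X_3)\le L_n\}$. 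Second, for the cross term $\EE[|\widehat X_2|^2|\Delta_3|^2]$ the paper exploits the independence of $\widehat X_2$ and $X_3^\psi-\widehat X_3$ (available for $n\ge T/(\tau_2-\tau_1)$, which is where the hypothesis $\tau_1<\tau_2$ enters), whereas you use Cauchy--Schwarz and $L^4$-moments of Gaussians; both yield the same $O(n^{\delta-1})$ bound.
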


\begin{proof}
Throughout this proof let
$ \Delta W_j^n \colon \Omega \to \R $,
$ j \in \{ 1, 2, \dots, n \} $,
$ n \in \N $,
be the mappings with the property that for all
$ n \in \N $, $ j \in \{ 1, 2, \dots, n \} $
we have
$ \Delta W_j^n = W( \frac{ j T }{ n } ) - W( \frac{ (j-1) T }{ n } ) $,
let 
$ \beta_n \in \R $, $ n \in \N $,
and
$ \gamma_n \in \R $, $ n \in \N $,
be the real numbers with the property that
for all $ n \in \N $ we have
\begin{equation}
  \gamma_n = \sum_{ j = 1 }^n
  \tfrac{ T }{ n }
  \cdot
  h\bigl( \tfrac{ (j-1) T }{ n } \bigr),
\qquad  
  \beta_n = \sum_{ j = 1 }^n
  \tfrac{ T }{ n }
  \cdot
  \bigl|
    g\bigl( \tfrac{ (j-1) T }{ n } \bigr)
  \bigr|^2
  ,
\end{equation}
and let 
$ \widehat{X}^{ ( \psi, n ) }_{ l, ( \cdot ) } 
\colon \{ 0, 1, \dots, n \} \times \Omega \to \R $,
$ l \in \{ 1, 2, 3, 4 \} $,
$ n \in \N $,
be the stochastic processes with the property that
for all $ n \in \N $,
$ k \in \{ 0, 1, \dots, n \} $
we have
$
  \widehat X^{(\psi,n)}_k = (\widehat X^{(\psi,n)}_{1,k},\dots,\widehat X^{(\psi,n)}_{4,k})
$.
By the properties of $f,g,h$ stated in Section \ref{sec:setting} 
and by the definition of $\mu^\psi$ and $\sigma$ (see \eqref{coeff}), we have for
all $ n \in \N $, $ k \in \{ 0, 1, \dots, n \} $ that
$
  \widehat X^{(\psi,n)}_{1,k} = \tfrac{ k\cdot T }{ n }
$
and
\begin{equation}
\begin{aligned}
\widehat X^{(\psi,n)}_{2,k} & = \sum_{j=1}^k f\bigl( \tfrac{ (j-1) T }{ n } \bigr)
  \cdot
  \Delta W_j
   =  
   \sum_{j=1}^{\min\{k,\lceil n\tau_1/T\rceil\}} f\bigl( \tfrac{ (j-1) T }{ n } \bigr)
  \cdot
  \Delta W_j
  ,
\\
  \widehat{X}^{ (\psi,n) }_{ 3, k }
  &
  = \sum_{ j = 1 }^k
  g\bigl( \tfrac{ (j-1) T }{ n } \bigr)
  \cdot
  \Delta W_j = \sum_{j=\lfloor n\tau_2/T\rfloor+2}^{\min\{k,\lceil n\tau_3/T\rceil\}} g\bigl( \tfrac{ (j-1) T }{ n } \bigr)
  \cdot
  \Delta W_j
  ,
\\
  \widehat{X}^{ (\psi,n) }_{ 4, k }
  &
  =
  \sum_{ j = 1 }^k
  \tfrac{ T }{ n }
  \cdot
  h\bigl( \tfrac{ (j-1) T }{ n } \bigr)
  \cdot
  \cos\!\big(
    \widehat{X}^{ (\psi,n) }_{ 2 , j - 1 }
    \cdot
    \psi(
      \widehat{X}^{ (\psi,n) }_{ 3 , j - 1 }
    )
  \big)\\
&  =
  \sum_{ j = \lfloor n\tau_3/T\rfloor+2 }^k
  \tfrac{ T }{ n }
  \cdot
  h\bigl( \tfrac{ (j-1) T }{ n } \bigr)
  \cdot
  \cos\!\big(
    \widehat{X}^{ (\psi,n) }_{ 2 , j - 1 }
    \cdot
    \psi(
      \widehat{X}^{ (\psi,n) }_{ 3 , j - 1 }
    )
  \big)
  .
\end{aligned}
\end{equation}
In particular, for all $ n \in \N $, $ k \in [ \frac{ n \tau_1 }{ T } , \infty ) \cap \{ 1, 2, \dots, n \} $ we have 
$ \widehat X^{(\psi,n)}_{2,k} = \widehat X^{(\psi,n)}_{2,n} $ 
and 
for all $ n \in \N $, $ k \in [ \frac{ n \tau_3 }{ T } , \infty ) \cap \{ 1, 2, \dots, n \} $ we have
$\widehat X^{(\psi,n)}_{3,k} = \widehat X^{(\psi,n)}_{3,n}$. 
Therefore, 
for all $ n \in \N $
we have
\begin{equation}\label{k4}
  \widehat{X}^{ (\psi,n) }_{ 4, n } =  \sum_{ j = \lfloor n\tau_3/T\rfloor+2 }^k
  \tfrac{ T }{ n }
  \cdot
  h\bigl( \tfrac{ (j-1) T }{ n } \bigr)
  \cdot
  \cos\!\big(
    \widehat{X}^{ (\psi,n) }_{ 2 , n }
    \cdot
    \psi(
      \widehat{X}^{ (\psi,n) }_{ 3 ,n }
    )
  \big)
  = \gamma_n
  \cdot
  \cos\!\big(
    \widehat{X}^{ (\psi,n) }_{ 2 , n }
    \cdot
    \psi(
      \widehat{X}^{ (\psi,n) }_{ 3 ,n }
    )
  \big). 
\end{equation}

We separately analyze the componentwise  mean square errors 
\begin{equation}
  \varepsilon_{ i, n } = \EE\bigl[ |X^\psi_i(T)-\widehat X^{(\psi,n)}_{i,n} |^2
  \bigr]
\end{equation}
for $i\in\{1,\dots,4\}$, $ n \in \N $. 
Clearly, for all $ n \in \N $ we have $\varepsilon_{ 1, n } = 0 $. 
Moreover, It\^{o}'s isometry shows that for all $ n \in \N $ we have
\begin{equation}
\label{eq:second_component}
\begin{aligned}
  \varepsilon_{ 2, n } 
&
  =
  \EE\!\left[
    \left|
    \sum_{ j = 1 }^{ n }
    \int_{ ( j - 1 ) T / n }^{ j T / n }
    \big(
      f(s) - f( \tfrac{ (j-1) T }{ n } )
    \big)
    \, dW(s)
    \right|^2
  \right]
  =
  \sum_{ j = 1 }^n
  \int_{
    ( j - 1 ) T / n
  }^{
    j T / n
  }
  \big|
    f(s) - f( \tfrac{ (j-1) T }{ n } )
  \big|^2
  \,
  ds\\
  &
\leq
    \sup_{ t \in [0, \tau_1 ] }
    | f'(t) |^2
 \cdot
  \sum_{ j = 1 }^n
  \int_{
    ( j - 1 ) T / n
  }^{
    j T / n
  }
  \big|
    s - \tfrac{ (j-1) T }{ n }
  \big|^2
  \,
  ds
=
  \frac{ T^3 }{ 3 n^2 }
\cdot
    \sup_{ t \in [0, \tau_1 ] }
    |f'(t)|^2,
\end{aligned}
\end{equation}
and, similarly,
\begin{equation}
\label{eq:third_component}
  \varepsilon_{ 3, n } 
  \le
\frac{ T^3 }{ 3 n^2 }
\cdot
    \sup_{ t \in [\tau_2, \tau_3 ] }
    |g'(t)|^2,
  \qquad 
\EE\bigl[|\widehat X^{(\psi,n)}_{2,n}|^2\bigr] 
  \le  T\cdot  \sup_{ t \in [0, \tau_1 ] }
    |f(t)|^2
    .
\end{equation}

We turn to the analysis of $\varepsilon_{ 4, n } $, $ n \in \N $. 
For this let $ \gamma \in \R $ be given by
$
  \gamma = \int_{ \tau_3 }^T h(s) \, ds
$
(see \eqref{alphas}). 
From \eqref{k4} we obtain 
\begin{equation}
\label{cv1}
  \varepsilon_{ 4, n }
 \leq
  2 \left| \gamma \right|^2
    \cdot 
    \EE\!\left[
    \big|
      \cos\!\big(
        X^{ \psi }_2( T )
        \cdot
        \psi\big(
          X^{ \psi }_3( T )
        \big)
      \big)
      -
      \cos\!\big(
        \widehat{X}^{ (\psi,n) }_{ 2 , n }
        \cdot
        \psi(
          \widehat{X}^{ (\psi,n) }_{ 3 , n }
        )
      \big)
    \big|^2
  \right]
  +
    2 
    \left|
      \gamma
      -
      \gamma_n
    \right|^2
    .
\end{equation}
Clearly, for all $ n \in \N $ we have
\begin{equation}
\label{eq:gamma_estimate}
\begin{aligned}
  \left|
    \gamma - \gamma_n
  \right|
&
  = \biggl| \sum_{j=1}^n\int_{
      ( j - 1 ) T / n
    }^{
      j T / n} \bigl(h(s)-h\big(
        \tfrac{ (j-1) T }{ n }
      \big)\bigr)\, ds\biggr|
     \\   &
\leq
    \sup_{ t \in [\tau_3, T ] }
    | h'(t) |
 \cdot
  \sum_{ j = 1 }^n
  \int_{
    ( j - 1 ) T / n
  }^{
    j T / n
  }
  \big|
    s - \tfrac{ (j-1) T }{ n }
  \big|
  \,
  ds
=
  \frac{ T^2 }{ 2 n }
\cdot
    \sup_{ t \in [\tau_3,T ] }
    |h'(t)|.
    \end{aligned}
\end{equation}
Using a trigonometric identity, the fact that 
$
  \forall \, x \in \R \colon
   | \sin(x) |\le \min\{ 1, |x| \}
$, 
inequality~\eqref{eq:second_component}, 
the fact that
$
  \PP_{X_3^\psi(T)} = \mathcal N(0,\beta)
$, 
a standard estimate of Gaussian tail probabilities, see, e.g., \cite[Lemma~22.2]{Klenke2008}, 
and the fact that $\psi^{-1}(n^\delta)\ge \psi^{-1}(1) = \sqrt{2\beta}$
we get
for all $ n \in \N $ that
\begin{equation}\label{nummer}
\begin{aligned}
&
  \EE\Big[
    \big|
      \cos\!\big( X^{ \psi }_2( T ) \cdot \psi\big( X^{ \psi }_3( T ) \big)
      \big)
      -
      \cos\!\big(
        \widehat{X}^{ (\psi,n) }_{ 2, n } \cdot \psi\big( X^{ \psi }_3( T ) \big)
      \big)
    \big|^2
  \Big]
\\
&
  \quad
  =
  4
  \cdot
  \EE\bigg[
    \Big|
      \sin\!\Big(
        \tfrac{ 1 }{ 2 }
        \,
        \big(
          X^{ \psi }_2( T )
          -
          \widehat{X}^{ (\psi,n) }_{ 2, n }
        \big)
        \,
        \psi\big(
          X^{ \psi }_3( T )
        \big)
      \Big)
      \sin\!\Big(
        \tfrac{ 1 }{ 2 }
        \,
        \big(
          X^{ \psi }_2( T )
          +
          \widehat{X}^{ (\psi,n) }_{ 2, n }
        \big)
        \,
        \psi\big(
          X^{ \psi }_3( T )
        \big)
      \Big)
    \Big|^2
  \bigg]
\\
&
  \quad
  \le
  4 \cdot
  \EE\bigg[
    \Big|
      \sin\!\Big(
        \tfrac{ 1 }{ 2 }
        \,
        \big(
          X^{ \psi }_2( T )
          -
          \widehat{X}^{ (\psi,n) }_{ 2, n }
        \big)
        \,
        \psi\big(
          X^{ \psi }_3( T )
        \big)
      \Big)
    \Big|^2
  \bigg]
\\
&
  \quad
  \le
  \EE\bigg[
        \big|
          X^{ \psi }_2( T )
          -
          \widehat{X}^{ (\psi,n) }_{ 2, n }
        \big|^2
        \,
      \big|
        \psi\big(
          X^{ \psi }_3( T )
        \big)
      \big|^2
      \,
      \mathbbm{1}_{
        \{
          X^{ \psi }_3( T )
          \leq \psi^{ - 1 }( n^{ \delta } )
        \}
      }
  \bigg]
  + 4 \cdot
  \PP\Big(
      X^{ \psi }_3( T )
      > \psi^{ - 1 }( n^{ \delta } )
  \Big)
\\
&
  \quad
  \le
  n^{ 2 \delta }
  \,
  \EE\Big[
        \big|
          X^{ \psi }_2( T )
          -
          \widehat{X}^{ (n) }_{ 2, n }
        \big|^2
  \Big]
  +  \frac{
    \sqrt{\beta} 
  }{
    \psi^{ - 1 }( n^{ \delta } )
    \sqrt{ 2 \pi }
  }
  \cdot 
  \exp\!\left(
    -
    \tfrac{
      1
    }{
      2
      \beta
    }
    \cdot
      \left|
        \psi^{ - 1 }(
          n^{ \delta }
        )
      \right|^2
  \right)
\\ &
  \quad
  \le
  \frac{ T^3 }{
    3 \, n^{ 2 ( 1 - \delta ) }
  }
    \sup_{ t \in [0, \tau_1 ] }
    |f'(t)|^2
  +
  \frac{
    1
  }{
    2 \sqrt{ \pi }
  }
  \cdot
  \exp\!\left(
    -
    \tfrac{
      1
    }{
      2
      \beta
    }
    \cdot
      \left|
        \psi^{ - 1 }(
          n^{ \delta }
        )
      \right|^2
  \right)
  .
\end{aligned}
\end{equation}
By Lemma \ref{lem:existence} we have $\lim_{x\to\infty}\psi'(x)=\infty$. 
Hence, there exists $ n_1 \in \N $ 
such that $ \big[ | n_1 |^\delta, \infty \big) \subset \psi'\big( [0, \infty) \big) $. 
Put
\begin{equation}
  n_0 = \max\Bigl\{n_1,\bigl\lceil\tfrac{T}{\tau_2-\tau_1}\bigr\rceil\Bigr\}
\end{equation}
and let $ n \in \{ n_0 , n_0 + 1 , \dots \} $. Then 
$(W(s))_{s\in[0,\lceil n\tau_1/T\rceil\cdot T/n]}$ and $(W(s)-W(\tau_2))_{s\in[\tau_2,T]}$ are independent, 
which implies independence of the random variables   $\widehat X^{(\psi,n)}_{2,n}$ and $X_3^\psi(T)-\widehat X_{3,n}^{(\psi,n)}$. 
Using the latter fact as well as the fact that $ \psi' $ is strictly increasing 
and the estimates in \eqref{eq:third_component}, we may proceed analoguously 
to the derivation of \eqref{nummer} to obtain
\begin{equation}\label{nummer2}
\begin{aligned}
&
  \EE\Bigl[
    \bigl|
      \cos\bigl(
        \widehat{X}_{ 2, n }^{ (\psi,n) }
        \cdot
        \psi\bigl(
          X_3^{ \psi }( T )
        \bigr)
      \bigr)
      -
      \cos\bigl(
        \widehat{X}_{ 2, n }^{ (\psi,n) }
        \cdot
        \psi\big(
          \widehat{X}_{ 3, n }^{ (n) }
        \big)
      \bigr)
    \bigr|^2
  \Bigr]
\\
& \qquad
  \leq 4 \cdot
  \EE\Bigl[
    \bigl|
      \sin\bigl(
        \tfrac{ 1 }{ 2 }
        \cdot
        \widehat{X}^{ (\psi,n) }_{ 2, n }
        \cdot
        \bigl[
          \psi\big(
            X^{ \psi }_3( T )
          \big)
          -
          \psi\big(
            \widehat{X}^{ (\psi, n) }_{ 3, n }
          \big)
        \bigr]
      \bigr)
    \big|^2
  \Bigr]
\\
&\qquad
  \leq
  \EE\Big[
    \big|
      \widehat{X}^{ (\psi,n) }_{ 2, n }
    \big|^2
    \cdot
    \big|
      \psi\big(
        X^{ \psi }_3( T )
      \big)
      -
      \psi\big(
        \widehat{X}^{ (\psi,n) }_{ 3, n }
      \big)
    \big|^2
    \cdot
    \mathbbm{1}_{
      \{
        \psi'(
          \max\{
            X^{ \psi }_3( T ) ,
            \widehat{X}^{ (\psi,n) }_{ 3, n }
          \}
        )
        \le
        n^{ \delta }
      \}
    }
  \Big]
\\ &
  \qquad\quad\quad
  +
  4 \cdot
  \PP\big(
    \psi'\big(
      \max\{
        X^{ \psi }_3( T ) ,
        \widehat{X}^{ (\psi,n) }_{ 3, n }
      \}
    \big)
    >
    n^{ \delta }
  \big)
\\
&\qquad
  \leq
  n^{ 2 \delta }
  \cdot
  \EE\Big[
    \big|
      \widehat{X}^{ (\psi,n) }_{ 2, n }
    \big|^2
  \Big]
  \cdot
  \EE\Big[
    \big|
      X^{ \psi }_3( T )
      -
      \widehat{X}^{ (\psi,n) }_{ 3, n }
    \big|^2
  \Big]
  \\
  & \qquad\quad\quad
  +
  4 \cdot
  \PP\big(
    \max\{
      X^{ \psi }_3( T ) ,
      \widehat{X}^{ (\psi,n) }_{ 3, n }
    \}
    >
    (\psi' )^{ - 1 }( n^{ \delta })
  \big)\\
& \qquad
  \leq
  \frac{ T^4 }{ 3 \, n^{ 2 ( 1 - \delta ) } }
  \cdot
    \sup_{ t \in [ 0 , \tau_1 ] }
    | f(t) |^2
  \cdot
    \sup_{ t \in [ \tau_2 , \tau_3 ] }
    | g'(t) |^2
\\
&\qquad\quad\quad
  +
  4 \cdot
  \PP\big(
    X^{ \psi }_3( T )
    >
          ( \psi' )^{ - 1 }( n^{ \delta })
  \big)
  +
  4 \cdot
  \PP\big(
    \widehat{X}^{ (\psi,n) }_{ 3, n }
    >
          ( \psi' )^{ - 1 }( n^{ \delta })
  \big)
  .
\end{aligned}
\end{equation}
Note that $\PP_{ X_{3}^{\psi}} = \mathcal N(0,\beta)$ and  $\PP_{\widehat X_{3,n}^{(\psi,n)}} = \mathcal N(0,\beta_n)$ and $\sup_{m\in\N}\beta_m \in [\beta,\infty)$. We may therefore apply Lemma \ref{lem:gaussian2} to conclude 
\begin{equation}\label{vv34}
\begin{aligned}
& \PP\big(
    X^{ \psi }_3( T )
    >
          ( \psi' )^{ - 1 }( n^{ \delta })
  \big)
  +
  \PP\big(
    \widehat{X}^{ (\psi,n) }_{ 3, n }
    >
          ( \psi' )^{ - 1 }( n^{ \delta })
  \big)\\
  & \qquad \le
  \exp\bigl(-\tfrac{|(\psi')^{-1}(n^\delta)|^2}{2\beta} \bigr)+
  \exp\bigl(-\tfrac{|(\psi')^{-1}(n^\delta)|^2}{2\sup_{m\in\N}\beta_m} \bigr).
\end{aligned}
\end{equation}
Combining \eqref{cv1}--\eqref{nummer} and \eqref{nummer2}--\eqref{eq:pol_estimate} 
ensures that there exist $c_1,c_2\in (0,\infty)$ 
such that for all $ n \in \{ n_0, n_0 + 1, \dots \} $ we have
\begin{equation}\label{neuenummer}
\begin{aligned}
\varepsilon_4\leq c_1\cdot \Bigl(\tfrac{1}{n^{2(1-\delta)}}+ \exp\bigl(-c_2
\cdot\bigl|\psi^{-1}(n^\delta)\bigr|^2\bigr)+ \exp\bigl(-c_2
\cdot\bigl|(\psi')^{-1}(n^\delta)\bigr|^2\bigr)\Bigr).
\end{aligned}
\end{equation}
By assumption we have
for all $ q \in (0,\infty) $
that
$
   \liminf_{ x \to \infty }
  \big[
    \psi( x ) \cdot
    \exp( - q x^2 )
  \big] = \infty
$. 
Hence, Lemma~\ref{lem:speed_psi}
ensures that
there exists
$ c_3 \in (0,\infty) $
such that for all $ n \in \N $ we have 
\begin{equation}
\label{eq:pol_estimate}
  \tfrac{ 1 }{
    n^{ ( 1 - \delta )
    }
  }
  \leq
  c_3
  \cdot
  \exp\bigl(
    - c_2 \left| \psi^{ - 1 }( n^{ \delta } ) \right|^2
  \bigr).
\end{equation}  

Combining \eqref{eq:second_component}, \eqref{eq:third_component}, \eqref{neuenummer}, 
and \eqref{eq:pol_estimate} finishes the proof.
\end{proof}

\begin{ex}\label{ex34}
Assume the setting in Section~\ref{sec:setting},
assume that $ \tau_1 < \tau_2 $, let
$ \beta \in (0,\infty) $ be given by 
$ \beta = \int_{ \tau_2 }^{ \tau_3 } \left| g(s) \right|^2 ds $,
let 
$ \psi_l \colon \R \to (0,\infty) $,
$ l \in \{ 1, 2 \} $, 
be the functions such that for all $ x \in \R $ 
we have
\begin{align}
\label{eq:example_psi1}
  \psi_1(x) &=
  \exp\!\left(
    x^3 + 2 x - ( 2 \beta )^{ 3 / 2 } - 2( 2 \beta )^{ 1 / 2 }
  \right)
  ,
\\
  \psi_2(x) &=
  \exp\!\left(x
    \exp\!\left( x^2 +1\right) -
    ( 2 \beta )^{ 1 / 2 }\exp( 2 \beta +1)
  \right)
  ,
\end{align}
and 
for every $ n \in \N $,
$ l \in \{ 1, 2 \} $
let 
$ 
  \widehat{X}^{ ( \psi_l, n ) } 
  \colon \{ 0, 1, \dots, n \} \times \Omega \to \R^4 
$
be the mapping such that
for all 
$ k \in \{ 0, 1, 2, \dots, n - 1 \} $
we have
$
  \widehat X^{(\psi_l,n)}_0 = 0
$ 
and 
\begin{equation}
  \widehat{X}^{ (\psi_l,n) }_{ k + 1 } =
  \widehat{X}^{ (\psi_l,n) }_k
  +
  \mu^{ \psi_l }( \widehat{X}^{ (\psi_l,n) }_k ) \, \tfrac{ T }{ n }
  +
  \sigma( \widehat{X}^{ (\psi_l,n) }_k ) \,
  \big(
    W( \tfrac{ ( k + 1 ) T }{ n } )
    -
    W( \tfrac{ k T }{ n } )
  \big)
  .
\end{equation}
Clearly,
we have
$ \psi_1, \psi_2 \in C^{ \infty }( \R , (0,\infty) ) $
and 
$ \psi_1\big( \sqrt{ 2 \beta } \big) = \psi_2\big( \sqrt{ 2 \beta } \big) = 1 $. 
Moreover, 
for all $ q \in (0,\infty) $
we have
\begin{equation}
  \liminf_{
    x \mapsto
    \infty
  }
  \big[
    \psi_1( x )
    \cdot
    \exp(
      - q x^2
    )
  \big]
  =
   \liminf_{
     x \mapsto
    \infty
  }
  \big[
    \psi_2( x )
    \cdot
    \exp(
      - q x^2
    )
  \big]
  = \infty. 
\end{equation}
Furthermore, for all $ x \in \R $ we have 
\begin{equation}
\begin{aligned}
  \psi_1'( x )
& =
  \left(
    3 x^2
    +
    2
  \right)\cdot
  \psi_1( x )
  > 0
  ,\\ 
  \psi_1''( x )
& =
  \bigl(
    6 x
    +
    (
      3 x^2 + 2
    )^2
  \bigr)\cdot
  \psi_1( x )
  =
  \bigl(
    9 x^4
    +
    3 x^2
    +
    3
    +(3 x +1)^2
      \bigr)\cdot
  \psi_1( x )
  > 0
\end{aligned}
\end{equation}
and
\begin{equation}
\begin{aligned} 
  \psi_2'( x )
 &=
  (2x^2+1)
  \exp\!\left( x^2+1
  \right)\cdot
  \psi_2( x )>0
  , 
\\
  \psi_2''( x )
& =
  \left(
    4x+(1+2x^2)\left(2x+(1+2x^2)\exp\!\left( x^2+1
  \right)\right)
    \right)
  \exp\!\left( x^2+1
  \right)
  \psi_2( x )
  \\
  & >
  \left(
    4x+(1+2x^2)\left(2x+2(1+2x^2)\right)
    \right)
  \exp\!\left( x^2+1
  \right)
  \psi_2( x )
  \\
  & \geq
  \left(
    4x+7/4\cdot (1+2x^2)
    \right)
  \exp\!\left( x^2+1
  \right)
  \psi_2( x ) \ge (17/28) \exp\!\left( x^2+1
  \right)
  \psi_2( x ) >0
  .
\end{aligned}
\end{equation}
Hence, 
$ \psi_1 $, $ \psi_1' $,  $ \psi_2 $, and $ \psi_2' $ are strictly increasing 
and we have $ \psi_1'( \R )=\psi_2'( \R ) = ( 0, \infty ) $.

Using Corollary \ref{cor1} and Theorem \ref{t2} with $ \delta = \nicefrac{ 1 }{ 2 } $ 
we conclude that there exist $ c_1, c_2 \in (0,\infty) $, $ n_0 \in \N $ 
such that for all $ k \in \{ 1, 2 \} $ and all $ n \in \{ n_0 , n_0 + 1 , \dots \} $ 
we have
\begin{equation}\label{first}
\begin{aligned}
&
 c_1
  \cdot
  \exp\bigl(
    -
    \tfrac{ 2 }{ \beta }
    \cdot
    \bigl|
      \psi_k^{ - 1 }\!\bigl( \tfrac{ 1 }{ c_1 }  \cdot n^3 \bigr)
    \bigr|^2
  \bigr)
\\ & \qquad \le 
\Big(
    \EE\Big[
    \bigl\|
      X^{ \psi_k }( T ) - \widehat{X}_{n}^{(\psi_k,n) }
    \bigr\|_{ \R^4 }^2
    \Big]
  \Big)^{ 1 / 2 }\\
  & \qquad\le   c_2
  \cdot
  \Big[
    \exp\bigl(
      - \tfrac{ 1 }{ c_2 } \cdot \bigl| \psi_k^{ - 1 }( n^{ 1/2 } ) \bigr|^2
    \bigr)
    +
    \exp\bigl(
      - \tfrac{ 1 }{ c_2 } \cdot
      \bigl| ( \psi_k' )^{ - 1 }( n^{1/2 } ) \bigr|^2
    \bigr)
  \Bigr]
  .
  \end{aligned}
\end{equation}

Next, we provide suitable minorants and majorants for the 
functions $ ( \psi_k )^{ - 1 } $, $ k \in \{ 1, 2 \} $, 
and 
$ ( \psi_k' )^{ - 1 } $, $ k \in \{ 1, 2 \} $. To this end 
we use the fact that for all $a\in\R$ and all strictly increasing continuous
functions $f_1, f_2\colon [a, \infty)\to\R$ with $f_1\geq f_2$
and $\lim_{x\to\infty} f_2(x)=\infty$  we have
\begin{equation}
\forall\, x\in[f_1(a),\infty)\colon\,\, x=f_2(f_2^{-1}(x))\leq f_1(f_2^{-1}(x))
\end{equation}
and therefore
\begin{equation}\label{inverse}
\forall\, x\in[f_1(a),\infty)\colon\,\,f_1^{-1}(x)\leq f_2^{-1}(x).
\end{equation}
Clearly, for all $ x \in [1,\infty) $ we have 
\begin{equation}
\begin{aligned}
\exp\!\left(
    x^3 +2- ( 2 \beta )^{ 3 / 2 } - 2( 2 \beta )^{ 1 / 2 }
  \right)
  \leq 
  \psi_1(x)
&
  \leq 
  \exp\!\left(
    3x^3
  \right),
\\
  \exp\!\left(
    \exp\!\left( x^2\right) -
    ( 2 \beta )^{ 1 / 2 }\exp( 2 \beta +1)
  \right)\leq \psi_2(x)&\leq \exp\!\left(
    \exp\!\left( x^2 +x+1\right) 
  \right)\leq \exp\!\left(
    \exp\!\left( 3x^2\right) 
  \right)
  ,
\\
  \psi_1'(x)
\leq 
  \exp\!\left(
    3x^2 +2
  \right) \cdot \psi_1(x)
& \leq 
  \exp\!\left(
    8x^3
  \right) ,
\\ 
  \psi_2'(x) 
\leq 
  \exp\!\left( 
    3 x^2 + 2
  \right)
  \cdot
  \psi_2( x )
& \leq 
  \exp\!\left(
    \exp\!\left( 8x^2 \right)
  \right)
  .
\end{aligned}
\end{equation}
We may therefore apply \eqref{inverse} with $a=1$ to obtain that for all 
$ x \in [ \exp(\exp(8)) , \infty ) $ we have
\begin{equation}\label{n45}
\begin{aligned}
\bigl(\ln (x)-2+( 2 \beta )^{ 3 / 2 } + 2( 2 \beta )^{ 1 / 2
}\bigr)^{1/3}\geq \psi_1^{-1}(x)&\geq 3^{-1/3}\cdot (\ln
(x))^{1/3},\\
(\ln(\ln
(x)+( 2 \beta )^{ 1 / 2 }\exp( 2 \beta +1)))^{1/2}\geq \psi_2^{-1}(x)&\geq 3^{-1/2}\cdot (\ln(\ln
(x)))^{1/2}
\\
(\psi_1')^{-1}(x)&\geq 8^{-1/3}\cdot (\ln
(x))^{1/3},\\
(\psi_2')^{-1}(x)&\geq 8^{-1/2}\cdot (\ln(\ln
(x)))^{1/2}.
\end{aligned}
\end{equation}

Combining \eqref{first} with \eqref{n45} 
shows that there exist
$c_{1},c_{2},c_3,c_4\in (0,\infty)$, $n_0\in\N$ such that 
for all $ n \in \{ n_0, n_0 + 1, \dots \} $ we have
\begin{equation}
\begin{aligned}
  c_1 \cdot
  \exp\!\big(
    {
      - c_2
      \cdot 
      | \ln( n ) |^{ 2 / 3 }
    }
  \big)
& \leq
  \Big(
    \EE\Big[
    \bigl\|
      X^{ \psi_1 }( T ) - \widehat{X}_n^{ (\psi_1,n) }
    \bigr\|_{ \R^4 }^2
    \Big]
  \Big)^{ 1 / 2 }
\leq
  c_3 \cdot
  \exp\!\big(
    {
      - c_4 \cdot 
      | 
        \ln(n)
      |^{ 2 / 3 } 
    }
  \big),
\\
  c_1 \cdot 
  \exp\!\big(
    { - c_2 \cdot 
    \ln\!\big(
      \ln( n )
    \big)
    }
  \big)
& \leq
  \Big(
    \EE\Big[
    \bigl\|
      X^{ \psi_2 }( T ) - \widehat{X}_n^{ (\psi_2,n) }
    \bigr\|_{ \R^4 }^2
    \Big]
  \Big)^{ 1 / 2 }
\leq
  c_3 \cdot 
  \exp\!\big(
    {-
    c_4\cdot \ln\!\big( \ln( n ) \big) }
  \big)
  .
\end{aligned}
\end{equation}
In particular, in both cases the Euler-Maruyama scheme performs
asymptotically optimal on a logarithmic scale. 
\end{ex}

\section{Numerical experiments}
\label{sec:numerics}

We illustrate our theoretical findings by numerical simulations
of the mean error performance of the Euler scheme, the tamed Euler scheme, 
and the stopped tamed Euler scheme for a equation, which allows a decay of error 
not faster than 
$
  c
  \cdot
  \exp\!\big( - \nicefrac{ 1 }{ c } \cdot | \ln( n ) |^{ 2 / 3 } \big)
$ 
in terms of the number $ n \in \N $ of observations of the driving Brownian motion, 
where $c\in (0,\infty)$ is a real number which does not depend on $n \in \N $.

Assume the setting in Section~\ref{sec:setting},
assume that $ T = 1 $, 
$ \tau_1 = \tau_2 = \nicefrac{ 1 }{ 4 } $, 
$ \tau_3 = \nicefrac{ 3 }{ 4 } $, 
assume that for all $ x \in \R $ we have
\begin{equation}\label{4456}
\begin{aligned}
  f( x ) &  = 
  \1_{ ( - \infty , \nicefrac{ 1 }{ 4 } ) }(x)
  \cdot
  \exp\!\Big(
    3\ln(10)
    +
    \tfrac{ 1 }{  x - 1/4}
  \Big)
,\\
  g( x ) 
  & = 
  \1_{ ( \nicefrac{ 1 }{ 4 }, \nicefrac{ 3 }{ 4 } ) }(x)
  \cdot
  \exp\!\Big( \ln(2) +
    4\ln( 10)  + \tfrac{ 1 }{ 1/4 - x  } - \tfrac{ 1 }{  x - 3/4 }
  \Big)
, \\
  h( x ) 
 & =
  \1_{
    ( \nicefrac{ 3 }{ 4 } , \infty ) }( x )
    \cdot
  \exp\!\Big(
    4 \ln( 10 )+
    \tfrac{ 1 }{ 
       3 / 4- x 
    }
  \Big),
  \end{aligned}
  \end{equation}
(cf.\ Example~\ref{ex:fgh}),
let $\beta\in (0,\infty)$ be given by 
$ \beta = \int_{ 1/4 }^{ 3/4 } \left| g(s) \right|^2 ds $,
and let $ \psi \colon \R \to (0,\infty) $
be the function such that for all $ x \in \R $
we have
\[
  \psi(x) =
  \exp\!\big( 
    x^3 
  \big).
\]
Recall that the functions $f$, $g$, $h$, and $ \psi $ 
determine a drift coefficient $\mu^{\psi}\colon\R^4\to \R^4$ 
and a diffusion coefficient $\sigma\colon\R^4\to \R^4$, 
see~\eqref{coeff}.
Furthemore, recall that the fourth component of the solution $ X^\psi $ 
of the associated SDE at time $ 1 $ satisfies 
that it holds $ \PP $-a.s.\ that
\begin{equation}
  X^\psi_4( 1 ) 
  = 
  \smallint_{ 3 / 4 }^1 h(s) \, ds 
  \cdot 
  \cos\!\left(
    \smallint_0^{ 1 / 4 } f(s) \, dW(s)
    \cdot 
    \psi\!\left(
      \smallint\nolimits_{ 1 / 4 }^{ 3 / 4 } g(s) \, dW(s)
    \right)
  \right)
  ,
\end{equation}
see \eqref{sol}.

Furthermore, let
$
  \widehat{X}^{ (n), \eta }
  =
  \big(
    \widehat{X}^{ (n), \eta }_{ 1, ( \cdot ) }
    ,
    \widehat{X}^{ (n), \eta }_{ 2, ( \cdot ) }
    ,
    \widehat{X}^{ (n), \eta }_{ 3, ( \cdot ) }
    ,
    \widehat{X}^{ (n), \eta }_{ 4, ( \cdot ) }
  \big)
  \colon
  \{ 0, 1, \dots, n \} \times \Omega \to \R^4
$,
$ n \in \N $,
$
  \eta \in \{ 1, 2, 3 \}
$,
be the mappings such that for all 
$ \eta \in \{ 1, 2, 3 \} $,
$ n \in \N $,
$ k \in \{ 0, 1, \dots, n - 1 \} $
we have
$
  \widehat{X}^{ (n), \eta }_0 = 0
$
and
\begin{equation}
\begin{aligned}
&
  \widehat{X}^{ (n), 1}_{ k + 1 } 
  = 
  \widehat{X}^{ (n), 1 }_k
  +
  \mu^{ \psi }( 
    \widehat{X}^{ (n), 1 }_k 
  ) 
  \, \tfrac{ 1 }{ n }
  +
  \sigma( 
    \widehat{X}^{ (n), 1 }_k 
  ) \, 
  \big(
    W( \tfrac{ k + 1  }{ n } )
    -
    W( \tfrac{ k  }{ n } )
  \big)
  ,
\\
&
  \widehat{X}^{ (n), 2}_{ k + 1 } 
  =
  \widehat{X}^{ (n), 2 }_k
  +
  \frac{
    \mu^{ \psi }( 
      \widehat{X}^{ (n), 2 }_k 
    ) 
    \, \frac{ 1 }{ n }
  }{
      1 
      + 
      \|
        \mu^{ \psi }( 
          \widehat{X}^{ (n), 2}_k 
        ) 
      \|_{ \R^4 }
      \,
      \frac{ 1 }{ n }
  }
  +
  \sigma( 
    \widehat{X}^{ (n), 2 }_k 
  ) \, 
  \big(
    W( \tfrac{  k + 1  }{ n } )
    -
    W( \tfrac{ k }{ n } )
  \big)
  ,
\\
&
  \widehat{X}^{ (n), 3 }_{ k + 1 } 
  = 
  \widehat{X}^{ (n), 3 }_k
\\ 
&
  +
  \mathbbm{1}_{
    \left\{ 
      \| 
        \widehat{X}^{ (n), 3}_k 
      \|_{\R^4}
      \leq 
      \exp\left( 
        | 
          \ln( n  )
        |^{ 1 / 2 }
      \right)
    \right\}
  } 
  \left[
    \frac{
      \mu^{ \psi }( 
        \widehat{X}^{ (n), 3}_k 
      ) 
      \, \tfrac{ 1 }{ n }
      +
      \sigma( 
        \widehat{X}^{ (n), 3 }_k 
      ) \, 
      \big(
        W( \frac{  k + 1  }{ n } )
        -
        W( \frac{ k  }{ n } )
      \big)
    }{
      1 + 
      \big\|
        \mu^{ \psi }( 
          \widehat{X}^{ (n), 3 }_k 
        ) 
        \, \tfrac{ 1 }{ n }
        +
        \sigma( 
          \widehat{X}^{ (n), 3 }_k 
        ) \, 
        \big(
          W( \frac{  k + 1  }{ n } )
          -
          W( \frac{ k  }{ n } )
        \big)
      \big\|^2_{ \R^4 }
    }
  \right]
  .
\end{aligned}
\end{equation}
Thus 
$ \widehat{X}^{ (n), 1} $, $ \widehat{X}^{ (n), 2 } $, $ \widehat{X}^{ (n), 3} $ 
are the Euler scheme (see Maruyama~\cite{m55}), 
the tamed Euler scheme in 
Hutzenthaler et al.~\cite{HutzenthalerJentzenKloeden2012}, 
and the stopped tamed Euler scheme
in Hutzenthaler et al.~\cite{HutzenthalerJentzenWang2013}, respectively, 
each with time-step size $ 1 / n $. 

Let 
$
  \varepsilon_n^{ \eta }
  \in [0,\infty)
$,
$ n \in \N $,
$ \eta \in \{ 1, 2, 3 \} $,
be the real numbers with the property that
for all $ n \in \N $,
$ \eta \in \{ 1, 2, 3 \} $
we have
\[
  \varepsilon_n^\eta 
  =
  \EE\bigl[
    | X_4^\psi(1) - \widehat{X}_{4,n}^{ (n), \eta} | 
  \bigr], 
\]
let 
$
  \bar{f} \colon \R \to \R
$
and 
$
  \bar{\psi} \colon \R \to (0,\infty)
$
be the functions such that for all 
$ x \in \R $ we have
$
  \bar{f}( x ) = 
  \exp( 
    ( 2 \beta )^{ 3 / 2 } 
  ) 
  \cdot f(x)
$
and
$
  \bar{\psi}( x ) = 
  \exp( 
    - ( 2 \beta )^{ 3 / 2 } 
  )
  \cdot \psi(x)
$,
and let $ \alpha_1, \alpha_2, \alpha_3, \bar{c}, \bar{C} \in (0,\infty) $ 
be the real numbers given by 
\begin{gather}
  \alpha_1
   =
  \smallint_0^{ \tau_1 }
  | \bar{f}(s) |^2 \, ds ,
\qquad
  \alpha_2
 =
  \sup_{ s \in [ 0 , \nicefrac{ \tau_1 }{ 2 } ] }
  | \bar{f}'(s) |^2
  ,
\qquad
  \alpha_3
  =
  \inf_{ s \in [ 0 , \nicefrac{ \tau_1 }{ 2 } ] }
  | \bar{f}'(s) |^2
  ,
\\
 \bar{c} =
  \frac{
    | 
      \int_{ \tau_3 }^1
      h(s) \, ds
    |
  }{
    8
    \,
    \pi^{ 3 / 2 }
    \exp( \tfrac{ \pi^2 }{ 4 } )
  }
  ,
\qquad
  \bar{C} =
  \frac{
    \sqrt{ 12 }
    \,
    \max\{
      1 ,
      \sqrt{\alpha_2}
    \}
  }{
    \sqrt{ \alpha_3 }
    \min\{
      1 ,
      \sqrt{ \tfrac{ \alpha_1 }{ 2 } }
    \}
  }
  .
\end{gather}
In the next step we note that
$ \bar{\psi} \in C^{ \infty }( \R, (0,\infty) ) $
is strictly increasing,
we note that
$
  \liminf_{ x \to \infty } 
  \bar{\psi}( x ) = \infty
$,
and we note that
$ \bar{\psi}( \sqrt{ 2 \beta } ) = 1 $.
We can thus apply 
inequality~\eqref{eq:cor1_eq2}
in Corollary~\ref{cor1} 
(with the functions 
$
  \bar{f} 
$,
$ 
  g 
$,
$
  h 
$,
and  
$
  \bar{\psi} 
$)
to obtain that
for all
$
  n \in \N
$,
$ s_1, \dots, s_n \in [ 0, 1 ] $
and all measurable
$
  u \colon
  \R^n \to \R
$
we have 
$ 
  [ 8 \bar{C} n^{ 3 / 2 } ( \tau_1 )^{ - 3 / 2 } , \infty) \subset \bar{\psi}( \R ) 
$ 
and
\begin{equation}
  \EE\Big[
    \bigl|
      X^{ \psi }_4(1)
      -
      u\big(
        W( s_1 ),
        \dots ,
        W( s_n )
      \big)
    \bigr|
  \Big]
  \geq
  \bar{c}
  \cdot
  \exp\Bigl(
    -
    \tfrac{ 2 }{ \beta }
    \cdot
    \bigl|
      \bar{\psi}^{ - 1 }\bigl( \tfrac{ 8 \, \bar{C} }{ ( \tau_1 )^{ 3 / 2 } } \cdot 
      n^{ 3 / 2 } \bigr)
    \bigr|^2
  \Bigr)
  .
\end{equation}
This and the fact that 
$
  \forall \, y \in \bar{\psi}( \R ) \colon
  \bar{\psi}^{ - 1 }( y ) = 
  \big[
    \ln(
      y \cdot \exp( ( 2 \beta )^{ 3 / 2 } )
    )
  \big]^{ 1 / 3 }
$
ensure that
for all
$
  n \in \N
$,
$ s_1, \dots, s_n \in [ 0, 1] $
and all measurable
$
  u \colon
  \R^n \to \R
$
we have 
\begin{equation}
\begin{aligned}
&
  \EE\Big[
    \bigl|
      X^{ \psi }_4(1)
      -
      u\big(
        W( s_1 ),
        \dots ,
        W( s_n )
      \big)
    \bigr|
  \Big]\\
  & \qquad\qquad
  \geq
  \bar{c}
  \cdot
  \exp\Bigl(
    -
    \tfrac{ 2 }{ \beta }
    \cdot
    \bigl|
      \ln\bigl( 
        \tfrac{ 8 \, \bar{C} }{ ( \tau_1 )^{ 3 / 2 } } 
        \cdot 
        \exp( ( 2 \beta )^{ 3 / 2 } )
        \cdot 
        n^{ 3 / 2 } 
      \bigr)
    \bigr|^{ 2 / 3 }
  \Bigr)
\\ &\qquad\qquad =
  \bar{c}
  \cdot
  \exp\Bigl(
    -
    \tfrac{ 2 }{ \beta }
    \,
    \bigl|
      \ln\bigl( 
        \tfrac{ 
          8 \, \bar{C} 
          \exp( ( 2 \beta )^{ 3 / 2 } )
        }{ ( \tau_1 )^{ 3 / 2 } } 
      \bigr)
      +
      \tfrac{ 3 }{ 2 }
      \ln( n ) 
    \bigr|^{ 2 / 3 }
  \Bigr)
\\ & \qquad\qquad\geq
  \bar{c}
  \cdot
  \exp\Bigl(
    -
    \tfrac{ 2 }{ \beta }
    \,
    \bigl|
      \ln\bigl( 
        \tfrac{ 
          8 \, \bar{C} 
          \exp( ( 2 \beta )^{ 3 / 2 } )
        }{ ( \tau_1 )^{ 3 / 2 } } 
      \bigr)
    \bigr|^{ 2 / 3 }
  \Bigr)
  \cdot
  \exp\Bigl(
    \tfrac{ - 2^{ 1 / 3 } \, 3^{ 2 / 3 } }{ \beta }
    \cdot
    |
      \ln( n ) 
    |^{ 2 / 3 }
  \Bigr)
  .
\end{aligned}
\end{equation}
In particular, this proves that there exists 
a real number $ c \in (0,\infty) $
such that for all 
$ \eta \in \{ 1, 2, 3 \} $,
$ n \in \N $
we have
\begin{equation}
  \varepsilon_n^\eta 
=
  \EE\bigl[
    | X_4^\psi(1) - \widehat{X}_{4,n}^{ (n), \eta} | 
  \bigr] 
  \ge c\cdot 
  \exp\!\big(
    - \tfrac{ 1 }{ c } \cdot | \ln(n) |^{ 2 / 3 }
  \big) .
\end{equation}

In the next step 
let $ m = 5000 $, $ N = 2^{ 21 } $, 
let $ B = ( B_1, \dots, B_m ) \colon [0,1] \times \Omega \to \R^m $ be an $ m $-dimensional 
standard Brownian motion,
and let
$ Y^N = ( Y^N_1, \dots, Y^N_m ) \colon \Omega \to \R $,
$ N \in \N $,
be the random variables with the property that
for all $ N \in \N $, $ k \in \{ 1, 2, \dots, m \} $ we have
\begin{equation}
\label{reference}
\begin{aligned}
  Y^N_k & =  
  \int_{ 3 / 4 }^1 h(s) \, ds
  \cdot 
  \cos\!\left(
    \textstyle
    \frac{ 1 }{ N }
    \sum\limits_{ i = 0 }^{ \lfloor N / 4 \rfloor } 
    f'( \tfrac{ i }{ N }) \cdot 
    B_k( \tfrac{ i }{ N } ) 
    \cdot 
    \psi\!\left(
      - \frac{ 1 }{ N }
      \sum\limits_{ i = \lceil N / 4 \rceil }^{ \lfloor 3 N / 4 \rfloor } 
      g'( \tfrac{ i }{ N }) 
      \,
      B_k( 
        \tfrac{ i }{ N }
      )
    \right)
    \displaystyle
  \right)
  .
\end{aligned}
\end{equation}
The random variables
$ Y^N_k $, $ k \in \{ 1, 2, \dots, m \} $, $ N \in \N $,
are used to get reference estimates 
of realizations of $ X^{ \psi }_4( 1 ) $. 
Our numerical results are based on a simulation 
\begin{equation}
\label{sim}
  ( b_1, \dots, b_m ) 
  = 
  \big(
    ( b_{ 1, i } )_{ i \in \{ 0, 1, \dots, N \} },
    \dots, 
    ( b_{ m, i } )_{ i \in \{ 0, 1, \dots, N \} }
  \big)
  \in \R^{ (N+1) m } 
\end{equation}
of a realization of
$
  \big( 
    ( 
      B_1( \nicefrac{ i }{ N } ) 
    )_{ i \in \{ 0, 1, \dots, N \} } , 
    \dots , 
    (
      B_m( \nicefrac{ i }{ N } ) 
    )_{ i \in \{ 0, 1, \dots, N \} } 
  \big)
$
(a realization of
$
  ( B_1, \dots, B_m ) 
$ 
evaluated at the equidistant times 
$ \nicefrac{ i }{ N } $, $ i \in \{ 0, 1, \dots, N \} $). 
Based on $ ( b_1, \dots, b_m ) $ 
we compute a simulation 
$ ( y_1, \dots, y_m ) \in \R^m $ 
of a realization of 
$
  ( Y^N_1, \dots, Y^N_m )
$
and 
based on $ ( b_1, \dots, b_m ) $ 
we compute 
for every 
$
  \eta \in \{ 1, 2, 3 \} 
$ 
and every 
$
  n \in \{ 2^0, 2^1, \dots, 2^{ 19 } \} 
$ 
a simulation $(x^{(n),\eta}_{1},\dots,x^{(n),\eta}_{m})\in \R^m$ of a corresponding realization of $m$ independent copies of $\widehat{X}_{4,n}^{ (n), \eta}$. 
Then
for every 
$
  \eta \in \{ 1, 2, 3 \} 
$ 
and every 
$
  n \in \{ 2^0, 2^1, \dots, 2^{ 19 } \} 
$ 
the real number
\begin{equation}\label{errest}
  \widehat \varepsilon^\eta_n = \frac{1}{m}\sum_{\ell=1}^m |y_\ell - x_\ell^{(n),\eta}|
\end{equation}
serves as an estimate of 
$
  \varepsilon_n^\eta
  =
  \EE\bigl[
    | X_4^\psi(1) - \widehat{X}_{4,n}^{ (n), \eta} | 
  \bigr] 
$.

Figure \ref{fig1} shows, on a log-log scale, 
the plots of the error estimates $\widehat \varepsilon^1_n$, $\widehat \varepsilon^2_n$, $\widehat \varepsilon^3_n$ 
versus the number of time-steps 
$ n \in \{ 2^0, 2^1, 2^2, \dots, 2^{ 18 } , 2^{ 19 } \} $. 
Additionally, the powers 
$ n^{ - 0.01 } $, 
$ n^{ - 0.05 } $, 
$ n^{ - 0.1 } $,  
$ n^{ - 0.2 } $ 
are plotted versus 
$ n \in \{ 2^0, 2^1, 2^2, \dots, 2^{ 18 }, 2^{ 19 } \} $. 
The results provide some numerical evidence for the theoretical 
findings in Corollary~\ref{cor1b}, that is, none of the three schemes 
converges with a positive polynomial strong order of convergence to 
the solution at the final time.

\begin{figure}
\centering
\hspace*{2cm}
\includegraphics[trim=1cm 10cm .5cm 8cm,clip, width=19cm]
{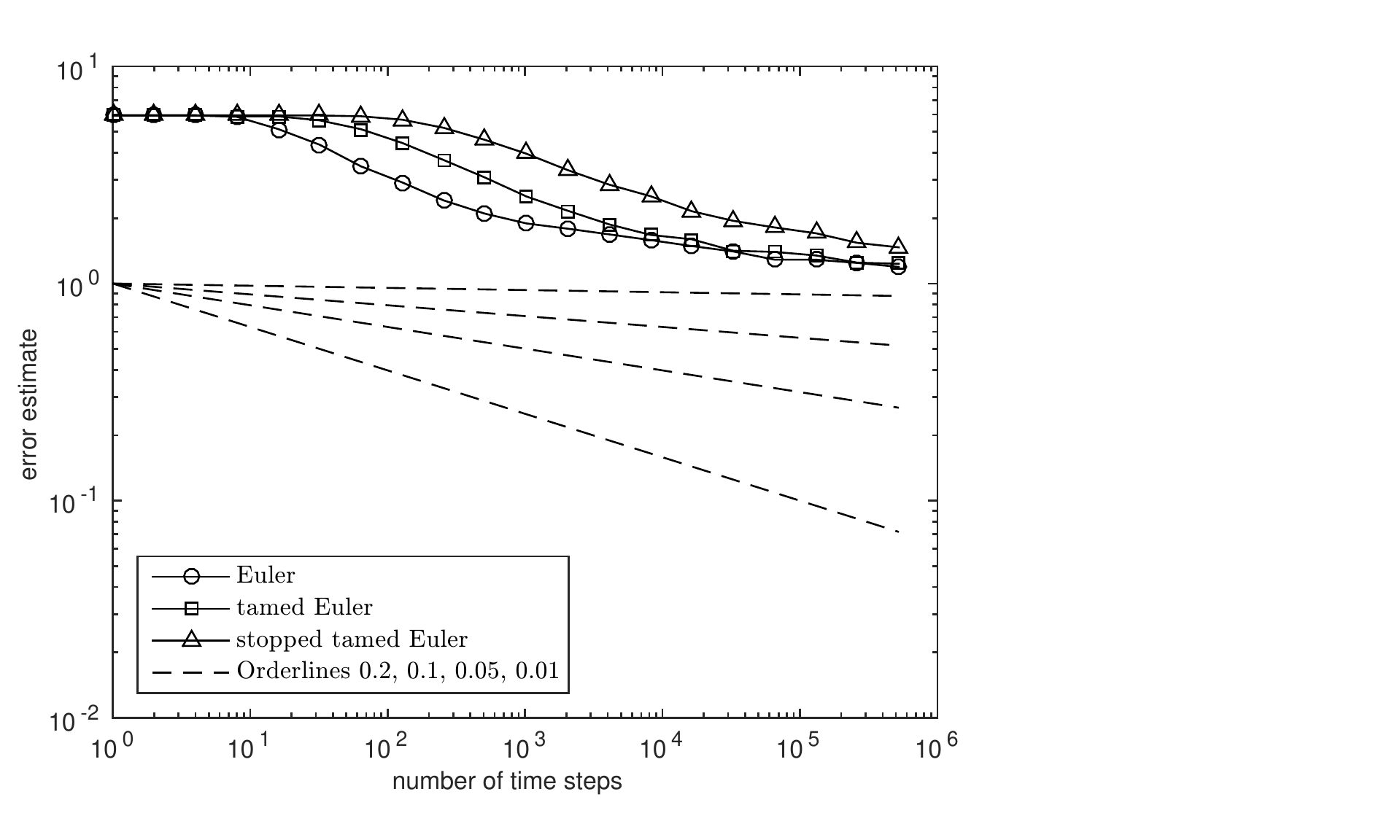}
\caption{Error vs. number of time steps}\label{fig1}
\end{figure}

\bibliographystyle{acm}
\bibliography{bibfile}

\end{document}